\definecolor{dblue}{rgb}{0,0,.6}
\numberwithin{equation}{section}
\newtheorem{theorem}{Theorem}[section]
\theoremstyle{plain}
\newtheorem{question}[theorem]{Question}
\newtheorem{corollary}[theorem]{Corollary}
\newtheorem{definition}[theorem]{Definition}
\newtheorem{lemma}[theorem]{Lemma}
\newtheorem{notation}[theorem]{Notation}
\newtheorem{proposition}[theorem]{Proposition}
\newtheorem{remark}[theorem]{Remark}
\newcommand{\del}{\partial}
\newcommand{\Z}{\mathbb Z}
\newcommand{\C}{\mathbb C}
\newcommand{\N}{\mathbb N}
\newcommand{\R}{\mathbb R}
\newcommand{\CP}{\mathbb P}
\newcommand{\RHom}{\operatorname{RHom}}
\newcommand{\im}{\operatorname{im}}
\newcommand{\Hom}{\operatorname{Hom}}
\newcommand{\Pic}{\operatorname{Pic}}
\newcommand{\Div}{\operatorname{Div}}
\newcommand{\Sym}{\operatorname{Sym}}
\newcommand{\id}{\operatorname{id}}
\newcommand{\Spec}{\operatorname{Spec}}
\newcommand{\pr}{\operatorname{pr}}
\newcommand{\codim}{\operatorname{codim}}
\newcommand{\Ext}{\operatorname{Ext}}
\newcommand{\CH}{\operatorname{CH}}
\newcommand{\supp}{\operatorname{supp}}
\newcommand{\sing}{\operatorname{sing}} 
\newcommand{\red}{\operatorname{red}}
\newcommand{\Frac}{\operatorname{Frac}}
 \newcommand{\sm}{\operatorname{sm}}
  \newcommand{\Tor}{\operatorname{Tor}}
  \newcommand{\A}{\mathbb{A}}
  \newcommand{\ord}{\operatorname{ord}}
\newcommand{\dashedlongrightarrow}{\xymatrix@1@=15pt{\ar@{-->}[r]&}}
\renewcommand{\longrightarrow}{\xymatrix@1@=15pt{\ar[r]&}}
\renewcommand{\mapsto}{\xymatrix@1@=15pt{\ar@{|->}[r]&}}
\renewcommand{\twoheadrightarrow}{\xymatrix@1@=15pt{\ar@{->>}[r]&}}
\newcommand{\hooklongrightarrow}{\xymatrix@1@=15pt{\ar@{^(->}[r]&}}
\newcommand{\congpf}{\xymatrix@1@=15pt{\ar[r]^-\sim&}}
\renewcommand{\cong}{\simeq}
\begin{document}    

\title[Unramified cohomology, algebraic cycles and rationality]{Unramified cohomology, algebraic cycles and rationality}

\author{Stefan Schreieder} 
\address{Institute of Algebraic Geometry, Leibniz University Hannover, Welfengarten 1, 30167 Hannover , Germany.}
\email{schreieder@math.uni-hannover.de}

\date{February 18, 2021} 
\subjclass[2010]{primary 14E08, 14C25; secondary 14M20} 
%

\keywords{Unramified cohomology, algebraic cycles, rationality.}

\begin{abstract}    
This is a survey on unramified cohomology with a view towards its applications to rationality problems.
\end{abstract}

\maketitle


\section{Introduction}

A variety $X$ of dimension $n$ over a field $k$ is said to be rational if it is birational to $\CP_k^n$, which means that it becomes isomorphic to $\CP_k^n$ after removing proper closed subsets from both sides.
More generally, $X$ is said to be stably rational if $X\times \CP^m_k$ is rational for some $m\geq 0$.

Generic projection shows that any variety $X$ of dimension $n$ over a field $k$ is birational to a hypersurface $\{F=0\}\subset \CP^{n+1}_k$.
The question whether $X$ is rational is then equivalent to asking whether over the algebraic closure of $k$, almost all solutions of the equation $F=0$  can be parametrized $1:1$  by a parameter $t\in \A^n_k$ (via rational functions with coefficients in $k$).
Rationality of $X$ thus translates into a very basic question about the solutions of $F=0$.

Disproving rationality for a given (rationally connected) variety $X$ is in general a subtle problem, which requires the computation of invariants that allow to distinguish irrational varieties from those that are rational.
Arguably the most powerful such invariant is unramified cohomology, which has its roots in the work of Bloch--Ogus \cite{BO} and which was introduced into the subject by Colliot-Thélène--Ojanguren \cite{CTO}.
Unramified cohomology is a generalization of the torsion in the third integral cohomology of smooth complex projective varieties, respectively the (unramified) Brauer group, that has previously been used by Artin--Mumford \cite{artin-mumford} and Saltman \cite{saltman}.
It has recently found many applications in combination with a cycle-theoretic specialization technique that was initiated by Voisin \cite{voisin} and developed further by  Colliot-Thélène--Pirutka \cite{CTP} and the author \cite{Sch-duke,Sch-JAMS}.

The purpose of this  paper is to give a detailed survey on unramified cohomlogy with a view towards its applications to rationality problems.
We discuss in particular the  interactions of unramified cohomology with cycles and correspondences,  
explain the most important known methods to construct nontrivial unramified cohomology classes in concrete examples and present the aforementioned cycle-theoretic specialization method, which in conjunction with unramified cohomology yields a  powerful obstruction to (stable) rationality. 
In addition, we discuss some the most important functoriality and compatibility results for unramified cohomology.  
This includes in particular the following two results that do not seem to be recorded well in the literature: functoriality with respect to finite pushforwards (see Proposition \ref{prop:functoriality}) and compatibility of the residue map with cup products (see Lemmas \ref{lem:Gysin-cup} and \ref{lem:residue}).

Recent work of Nicaise--Shinder, Kontsevich--Tschinkel, and Nicaise--Ottem \cite{NS,KT,NO} makes the importance of semistable degenerations in the subject apparent.
For this reason, we generalize in this survey Merkurjev's pairing to the case of schemes with normal crossings in Section \ref{sec:snc} and show that it has direct consequences for the aforementioned cycle-theoretic degeneration technique, see Theorem \ref{thm:specialization} below.
 
 The reader is   advised to also consult the excellent survey on unramified cohomology by Colliot-Th\'el\`ene \cite{CT}.
The present survey complements  \cite{CT} with more recent developments, such as Merkurjev's pairing (see Sections \ref{sec:pairing} and \ref{sec:snc}),   decompositions of the diagonal (see Section \ref{sec:diagonal}), the specialization method (see Section \ref{sec:specialization}) and some  important vanishing  results (see Section \ref{sec:vanishing}).
Moreover, some special emphasize on the computational aspect of the subject is given in Section \ref{sec:examples}.
We end this survey with a few open problems in Section \ref{sec:problem} and outline a strategy how unramified cohomology and decompositions of the diagonal may potentially be used to prove the existence of a rationally connected variety that is not unirational, see Proposition \ref{prop:unirational}.

\subsection{Notation and convention}
All schemes are separated and locally Noetherian.  An algebraic scheme is a scheme of finite type over a field.  A variety is an integral algebraic scheme.  If $k$ is an uncountable field, a very general point of a $k$-variety $X$ is a closed point outside a countable union of proper closed (algebraic) subsets.
An alteration of a  variety $X$ over an algebraically closed field $k$ is a proper generically finite morphism $\tau:X'\to X$ such that $X'$ is smooth over $k$.
The existence of alterations was shown by de Jong \cite{deJong}.

\section{Preliminaries from \'etale cohomology}

Let $k$ be a field. 
For a positive integer $m$ that is invertible in $k$, 
we denote by $\mu_m$ the sheaf of $m$-th roots of unity.
For any scheme $X$ over $k$, this is a subsheaf of the multiplicative sheaf $\mathbb G_m$ of invertible functions on $X$ and hence a sheaf in the \'etale topology of $X$.
For an integer $j\geq 1$, we consider the twists $ \mu_m^{\otimes j}:=\mu_m\otimes\dots \otimes \mu_m $ (j-times) and put $\mu_m^{\otimes 0}:=\Z/m$ and $  \mu_m^{\otimes j}:=\Hom(\mu_m^{\otimes - j},\Z/m)$ for $j<0$. 
If $k$ contains all $m$-th roots of unity (e.g.\ if $k$ is algebraically closed), then $\mu_m^{\otimes j}\cong \Z/m$ is a constant sheaf for all $j$.

For a scheme $X$ and a sheaf $F$ in the \'etale topology of $X$, we denote by 
$
H^i(X,F)
$
the $i$-th \'etale cohomology of $F$. 
If $X=\Spec A$ for some ring $A$, then we write
$$
H^i(A,F):=H^i(\Spec A,F).
$$ 

\subsection{Cohomology of fields}
Since $H^1(K,\mathbb G_m)=0$ by Hilbert 90, the long exact sequence associated to the Kummer sequence
\begin{align} \label{seq:Kummer}
0\longrightarrow \mu_m\longrightarrow \mathbb G_m\longrightarrow \mathbb G_m\longrightarrow 0
\end{align} 
shows that for any field $K$ in which $m$ is invertible, there is a canonical isomorphism
$$
H^1(K,\mu_m)\cong K^\ast/(K^\ast)^m,
$$
where $(K^\ast)^m\subset K^\ast$ denotes the subgroup of $m$-th powers in $K^\ast$.
Using this, we denote the class in $H^1(K,\mu_m)$ that is represented by an element $a\in K^\ast$ by $(a)$.
Moreover, for a collection of elements $a_1,\dots ,a_n\in K^\ast$, we denote by the symbol $(a_1,\dots ,a_n)$ the class
\begin{align} \label{eq:symbol}
(a_1,\dots ,a_n):=(a_1)\cup (a_2)\cup \cdots \cup (a_n)\in H^n(K,\mu_m^{\otimes n})
\end{align}
that is given by cup product. 

We will also need the following well-known result (see e.g.\ \cite[II.4.2]{serre}):
for a variety $X$ over an algebraically closed field $k$ in which $m$ is invertible, 
\begin{align} \label{eq:coho-dimension}
H^i(k(X),\mu_m^{\otimes j})=0 \ \ \text{for all $i>\dim X$.}
\end{align}

\subsection{Commutativity with direct limits}
If $X$ is a quasi-projective variety over a field $k$, then for any locally constant torsion \'etale sheaf $F$ and for any point $x\in X$, 
there are canonical isomorphisms
\begin{align} \label{eq:direct-limit}
\lim_{\substack{\longrightarrow \\ x\in U\subset X}}H^i(U,F)\cong H^i\left(\lim_{\substack{\longleftarrow \\ x\in U\subset X}}U,F\right)= H^i(  \mathcal O_{X,x},F) .
\end{align}
Here we used that in the above limit, we may restrict to the directed system of affine open subsets $U\subset X$ containing $x$.
This guarantees that the transition maps  are affine and so the above compatibility between the direct limit of \'etale cohomology groups and the \'etale cohomology of the inverse limit of schemes holds true by \cite[p.\ 119, III.3.17]{milne}, where we use that $F$ is locally constant.
Applying (\ref{eq:direct-limit}) to the generic point of $X$, we find in particular
\begin{align} \label{eq:direct-limit-2}
\lim_{\substack{\longrightarrow \\ \emptyset\neq U\subset X}}H^i(U,F)\cong H^i(k(X),F),
\end{align}
where the limit runs through all non-empty open subsets $U$ of $X$ and $k(X)$ denotes the function field of $X$.


\subsection{Long exact sequence of pairs} \label{subsec:les} 
\begin{proposition}[{\cite[III.1.25]{milne} or \cite[V.6.5.4]{SGA4.2}}]\label{prop:les-pair}
Let $V$ be a  scheme and let $Z\subset V$ be a closed subscheme with complement $U$.
For any \'etale sheaf $F$ on $V$, the pair $(V,Z)$ gives rise to a long exact sequence 
\begin{align} \label{eq:les:pair}
\dots \longrightarrow H^i(V,F)\longrightarrow H^i(U,F|_U)\longrightarrow H^{i+1}_Z(V,F) \longrightarrow H^{i+1}(V,F)\longrightarrow \dots ,
\end{align}
where $H^i_Z(V,F)$ denotes \'etale cohomology of $F$ with support on $Z$.
\end{proposition}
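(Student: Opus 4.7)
The plan is to construct $H^i_Z(V,F)$ as the right derived functors of a suitable left-exact functor and then extract (\ref{eq:les:pair}) as the associated long exact sequence. Concretely, define
\[
\Gamma_Z(V,F) := \ker\!\bigl(\Gamma(V,F)\longrightarrow \Gamma(U,F|_U)\bigr),
\]
the group of sections of $F$ supported on $Z$. This is a left-exact functor from étale sheaves on $V$ to abelian groups, and the category of étale sheaves on $V$ has enough injectives, so we may define $H^i_Z(V,F):=R^i\Gamma_Z(V,-)(F)$.

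The key input needed to pass from the short exact sequence
\[
0\longrightarrow \Gamma_Z(V,F)\longrightarrow \Gamma(V,F)\longrightarrow \Gamma(U,F|_U)
\]
to a genuine short exact sequence of functors on injective sheaves is the surjectivity of $\Gamma(V,I)\to \Gamma(U,I|_U)$ when $I$ is injective. I would prove this by writing a section over $U$ as a map $j_!\mathbb{Z}\to I$, where $j:U\hookrightarrow V$ is the open immersion, and using injectivity of $I$ to extend along the inclusion $j_!\mathbb{Z}\hookrightarrow\mathbb{Z}_V$ of étale sheaves on $V$. With this extension property in hand, an injective resolution $F\to I^\bullet$ produces a short exact sequence of complexes
\[
0\longrightarrow \Gamma_Z(V,I^\bullet)\longrightarrow \Gamma(V,I^\bullet)\longrightarrow \Gamma(U,I^\bullet|_U)\longrightarrow 0.
\]

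To finish, I would identify the cohomology of the third complex with $H^i(U,F|_U)$. This uses that $j^*$ is exact and left adjoint to $j_*$, so $j^*$ preserves injectives; hence each $I^n|_U$ is injective and in particular acyclic for $\Gamma(U,-)$, so that $I^\bullet|_U$ computes $H^i(U,F|_U)$. Taking the long exact cohomology sequence of the displayed short exact sequence of complexes then yields exactly (\ref{eq:les:pair}).

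The main obstacle is the extension statement for injective sheaves, i.e.\ the surjectivity of $\Gamma(V,I)\to \Gamma(U,I|_U)$; everything else is formal derived-functor machinery. An alternative, slicker route would be to work in the derived category with the distinguished triangle $i_\ast R\underline{\Gamma}_Z F\to F\to Rj_\ast j^\ast F\xrightarrow{+1}$ on $V$ and then apply $R\Gamma(V,-)$, but this essentially repackages the same verification.
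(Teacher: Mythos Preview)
Your argument is correct apart from one slip: the reason $j^\ast$ preserves injectives is not that it is \emph{left} adjoint to $j_\ast$, but that it is \emph{right} adjoint to the exact functor $j_!$ (right adjoints of exact functors preserve injectives). With that fix the derived-functor approach goes through cleanly.

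The paper organizes the same content differently: rather than resolving $F$ injectively and using the short exact sequence of section functors, it applies $\RHom(-,F)$ to the short exact sequence $0\to j_!\Z_U\to \Z_V\to i_\ast\Z_Z\to 0$ of sheaves on $V$, and then identifies the resulting $\Ext$ groups as $\Ext^i(j_!\Z_U,F)\cong H^i(U,F|_U)$, $\Ext^i(\Z_V,F)\cong H^i(V,F)$, and $\Ext^i(i_\ast\Z_Z,F)\cong H^i_Z(V,F)$. Your version is arguably more self-contained, since it avoids having to justify the last of these identifications. The payoff of the paper's packaging, however, is that the cup-product compatibility in Lemma~\ref{lem:les-pair} becomes immediate: a class $\alpha\in H^j(V,G)$ is viewed as a morphism $F\to F\otimes G[j]$ in the derived category, and composing the entire $\RHom$ triangle with this morphism yields the commuting ladder directly. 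Recovering that compatibility from your formulation would require a separate argument.
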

\begin{proof}
Let $i:Z\to V$ and $j:U\to V$ denote the inclusions.
Consider the short exact sequence
$$
0\longrightarrow j_{!}\Z_U\longrightarrow \Z_V\longrightarrow i_\ast \Z_Z\longrightarrow 0
$$
of \'etale sheaves on $V$, where $\Z_V$ denotes the constant sheaf on $V$ with stalk $\Z$, $j_{!}\Z_U$ denotes the sheaf that agrees with $\Z_V$ on $U$ and is 0 outside of $U$ and $i_\ast\Z_Z$ denotes the pushforward of the constant sheaf with stalk $\Z$ on $Z$ to $V$.
For any \'etale sheaf $F$ on $V$, applying $\operatorname{RHom}(-,F)$ to the above short exact sequence, we get a long exact sequence
\begin{align} \label{eq:les:Ext}
\dots \longrightarrow \Ext^i(\Z_V,F)\longrightarrow \Ext^i(j_!\Z_U,F)\longrightarrow \Ext^{i+1}(i_\ast \Z_Z,F)\longrightarrow \Ext^{i+1}(\Z_V,F)\longrightarrow \dots .
\end{align}
The long exact sequence (\ref{eq:les:pair}) follows from this as there are canonical identifications
\begin{align} \label{eq:Ext=H^i}
\Ext^i(j_!\Z_U,F)\cong H^i(U,F),\ \  \Ext^i(\Z_V,F)\cong H^i(V,F),\ \ \Ext^i(i_\ast \Z_Z,F)\cong H^i_Z(V,F) ,
\end{align}
see \cite[proof of III.1.25]{milne} or \cite[V.6.3]{SGA4.2}.
\end{proof}

\subsection{Cup products}
Let $V$ be a scheme and recall that the abelian category $\operatorname{Sh}_{\text{\'et}}(V)$ of \'etale sheaves on $V$ has enough injectives, see e.g.\ \cite[Proposition III.1.1]{milne}.
Let $D=D(\operatorname{Sh}_{\text{\'et}}(V))$ be the corresponding derived category.
For \'etale sheaves $F$ and $G$ on $V$, we then get canonical isomorphisms
$$
 \Ext^j(F,G)\cong \Hom_{D}(F,G[j]),
 $$ 
see e.g.\ \cite[Proposition 2.56]{huybrechts}.
Composition in the derived category thus yields for any \'etale sheaves $F,G$, and $H$ on $V$ a map
$$
\Ext^i(H,F)\otimes \Ext^j(F,  G)\longrightarrow \Ext^{i+j}(H,  G).
$$

There is a canonical map $\Ext^j(\Z_V,G)\to \Ext^j(F,F\otimes G)$ and so  for any \'etale sheaves $F,G$, and $H$ on $V$, the above construction yields a map
$$
\Ext^i(H,F)\otimes \Ext^j(\Z_V,  G)\longrightarrow \Ext^{i+j}(H,  F\otimes G) .
$$
%
%
Via  the identifications in (\ref{eq:Ext=H^i}), any class $\alpha\in H^j(V,G)$  thus yields cup product maps
$$
\cup \alpha : H^i(V,F)\to H^{i+j}(V,F\otimes G),\ \  \cup \alpha|_U : H^i(U,F)\to H^{i+j}(U,F\otimes G),
$$
$$
 \text{and}\ \  
\cup \alpha : H^i_Z(V,F)\to H_Z^{i+j}(V,F\otimes G) .
$$
We will need the following well-known compatibility of the long exact sequence in Proposition \ref{prop:les-pair} with the above cup product maps.

\begin{lemma}\label{lem:les-pair}
Let $V$ be a  scheme and let $Z\subset V$ be a closed subscheme with $U:=V\setminus Z$.
For any \'etale sheaves $F$ and $G$ on $V$ and for any class $\alpha\in H^j(V,G)$,  cup product with $\alpha$ induces a commutative diagram
\begin{align*} 
\xymatrix{
H^i(V,F)\ar[r]\ar[d]^{\cup \alpha} & H^i(U,F)\ar[r]\ar[d]^{\cup \alpha|_U} &H^{i+1}_Z(V,F)\ar[r]\ar[d]^{\cup \alpha} &H^{i+1}(V,F) \ar[d]^{\cup \alpha} \\
H^{i+j}(V,F\otimes G)\ar[r]&H^{i+j}(U,F\otimes G)\ar[r]&H^{i+j+1}_Z(V,F\otimes G)\ar[r]&H^{i+j+1}(V,F\otimes G) 
}
\end{align*}
between the long exact sequences from Proposition \ref{prop:les-pair}.
\end{lemma}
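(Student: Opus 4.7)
The plan is to reduce the compatibility to naturality of composition in the derived category $D = D(\operatorname{Sh}_{\text{ét}}(V))$. Under the identification $\Ext^i(H,F) \cong \Hom_D(H, F[i])$ recalled above, the class $\alpha \in H^j(V,G) = \Ext^j(\Z_V, G)$ corresponds to a morphism $\varphi_\alpha : \Z_V \to G[j]$ in $D$, which via the canonical map $\Ext^j(\Z_V, G) \to \Ext^j(F, F \otimes G)$ described immediately before the lemma produces a morphism $\psi_\alpha : F \to F \otimes G[j]$ in $D$. By the very construction of the cup product recalled there, each of the four vertical maps in the diagram becomes, after applying \eqref{eq:Ext=H^i}, the post-composition with an appropriate shift of $\psi_\alpha$ on the corresponding Ext group $\Hom_D(H, F[i])$, where $H \in \{j_! \Z_U,\ \Z_V,\ i_\ast \Z_Z\}$.

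Next I would observe that the long exact sequence \eqref{eq:les:Ext}, which \eqref{eq:les:pair} is translated into via \eqref{eq:Ext=H^i}, is produced by applying the cohomological functor $\Hom_D(-, F[\bullet])$ to the distinguished triangle
$$ j_! \Z_U \to \Z_V \to i_\ast \Z_Z \to j_! \Z_U[1] $$
in $D$, and an analogous statement holds for the bottom row with $F \otimes G$ in place of $F$. The horizontal maps in the diagram are therefore given, at the Ext level, by pre-composition with the three morphisms appearing in this triangle (together with its shift).

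Since pre-composition and post-composition of morphisms commute in any category (just by associativity), each of the three squares in the diagram commutes at the level of $\Hom$ groups in $D$; transporting back via \eqref{eq:Ext=H^i} gives the claimed commutativity at the level of étale cohomology. The only real bookkeeping obstacle I anticipate is verifying that the cup products on $H^i(U, F|_U)$ and $H^{i+1}_Z(V, F)$ are indeed induced by the same derived morphism $\psi_\alpha$ as the cup product on $H^i(V, F)$, rather than by some a priori different pullback or corestriction of $\alpha$; but this is precisely what the construction preceding the lemma arranges, so once it is acknowledged the proof reduces to the categorical observation above.
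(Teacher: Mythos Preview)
Your proposal is correct and takes essentially the same approach as the paper: both pass to the derived category, view $\alpha$ as a morphism $F\to F\otimes G[j]$, and deduce commutativity from the functoriality of the long exact sequence attached to the distinguished triangle $j_!\Z_U\to\Z_V\to i_\ast\Z_Z\to j_!\Z_U[1]$. The only cosmetic difference is that the paper phrases the key step as ``$\alpha'$ induces a morphism of distinguished triangles'' (citing \cite[Remark 12.57(ii)]{huybrechts}) rather than your more hands-on ``pre- and post-composition commute,'' but these are the same observation.
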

\begin{proof} 
Let $\alpha'\in \Ext^j(F,F\otimes G)$ be the image of $\alpha$ via the canonical map $\Ext^j(\Z_V,G)\to \Ext^j(F,F\otimes G)$.
As explained above, we may think about $\alpha'$ as a morphism $\alpha':F\to F\otimes G[j]$ in the derived category $D=D(\operatorname{Sh}_{\text{\'et}}(V))$.
This morphism yields by \cite[Remark 12.57(ii)]{huybrechts} a morphism between the following distinguished triangles in $D$:
$$
\xymatrix{
\RHom(i_\ast\Z_Z,F)\ar[r]\ar[d]^{\alpha'}& \RHom(\Z_V,F)\ar[r]\ar[d]^{\alpha'}& \RHom(j_!\Z_U,F)\ar[r]^-{+1}\ar[d]^{\alpha'}&\\
\RHom(i_\ast\Z_Z,F\otimes G[j])\ar[r] & \RHom(\Z_V,F\otimes G[j])\ar[r] & \RHom(j_!\Z_U,F\otimes G[j])\ar[r]^-{+1} & 
}
$$
The above morphism between exact triangles yields a morphism between the associated long exact sequences of cohomology groups, which by (\ref{eq:Ext=H^i}) identifies to the commutative diagram in the lemma, as we want.
This concludes the proof.
\end{proof}

%
\subsection{Gysin sequence}

\begin{theorem}[Gysin sequence] \label{thm:gysin}
Let $V$ be a regular Noetherian scheme with a regular closed subscheme $Z\subset V$ of pure codimension $c$ and complement $U:=V\setminus Z$.
Then for any integer $m$ invertible on $V$, there is a long exact sequence
$$
\dots \longrightarrow H^i(V,\mu_m^{\otimes j})\longrightarrow H^i(U,\mu_m^{\otimes {j}})\stackrel{\del}\longrightarrow H^{i+1-2c}(Z,\mu_m^{\otimes j-c}) \stackrel{\iota_\ast} \longrightarrow H^{i+1}(V,\mu_m^{\otimes j})\longrightarrow \dots .
$$
\end{theorem}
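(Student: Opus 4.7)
The plan is to deduce the Gysin sequence by combining the long exact sequence of the pair $(V,Z)$ from Proposition \ref{prop:les-pair} with the absolute cohomological purity isomorphism, which rewrites the local cohomology terms $H^{i+1}_Z(V,\mu_m^{\otimes j})$ as ordinary \'etale cohomology of $Z$ with a Tate twist.

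First, I would apply Proposition \ref{prop:les-pair} to the sheaf $F=\mu_m^{\otimes j}$ (which is a sheaf on $V$ since $m$ is invertible on $V$) to obtain the long exact sequence
$$
\dots \longrightarrow H^i(V,\mu_m^{\otimes j})\longrightarrow H^i(U,\mu_m^{\otimes j})\longrightarrow H^{i+1}_Z(V,\mu_m^{\otimes j})\longrightarrow H^{i+1}(V,\mu_m^{\otimes j})\longrightarrow \dots .
$$
This part is purely formal and does not use the regularity hypothesis on $V$ or $Z$.

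The key step, and the main obstacle, is to identify the local cohomology group with a twisted cohomology of $Z$. Concretely, one needs a canonical isomorphism
$$
H^{i+1}_Z(V,\mu_m^{\otimes j})\cong H^{i+1-2c}(Z,\mu_m^{\otimes j-c}),
$$
which is precisely the content of \textbf{absolute cohomological purity}. If $V$ is smooth over a field (or more generally if $Z\hookrightarrow V$ can be locally analyzed by a regular sequence in an equi-characteristic smooth situation), this goes back to Artin in SGA~4; in the stated generality of regular Noetherian schemes with $m$ invertible on $V$, this is Gabber's absolute purity theorem. Substituting this isomorphism into the pair sequence immediately produces the displayed six-term fragment of the Gysin sequence, with the maps $\partial$ and $\iota_\ast$ defined as the composites of the connecting/pushforward maps with the purity isomorphism.

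The remaining bookkeeping is to verify that these maps $\partial$ and $\iota_\ast$ are the standard residue and Gysin pushforward maps; this follows from the fact that purity is defined via an adjunction between $i^{!}$ and $i_\ast$ that sends the canonical trace class to the identity, so that the connecting homomorphism of the pair sequence becomes the residue and the map $H^{i+1}_Z(V,\mu_m^{\otimes j})\to H^{i+1}(V,\mu_m^{\otimes j})$ becomes the proper pushforward along $\iota:Z\hookrightarrow V$. Functoriality of the distinguished triangle associated to $(V,Z)$ then gives exactness of the resulting Gysin sequence, completing the proof.
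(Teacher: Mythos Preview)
Your proposal is correct and follows essentially the same route as the paper: combine the long exact sequence of the pair $(V,Z)$ from Proposition~\ref{prop:les-pair} with Gabber's absolute purity to identify $H^{i+1}_Z(V,\mu_m^{\otimes j})\cong H^{i+1-2c}(Z,\mu_m^{\otimes j-c})$. The only cosmetic difference is that the paper makes the purity step explicit by passing through the local-to-global spectral sequence $H^p(V,\underline H^q_Z(V,\mu_m^{\otimes j}))\Rightarrow H^{p+q}_Z(V,\mu_m^{\otimes j})$, using Gabber's theorem at the sheaf level ($\underline H^q_Z=0$ for $q\neq 2c$) to force degeneration at $E_2$, whereas you invoke the resulting group-level isomorphism directly.
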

\begin{proof}
The theorem is a well-known consequence of the long exact sequence of pairs in Proposition \ref{prop:les-pair} together with Gabber's proof of Grothendieck's purity conjecture \cite{fujiwara,ILO}; we give some details for convenience of the reader.

Let $\underline H^i_Z(V,\mu_{m}^{\otimes j})$ be the \'etale sheaf on $V$, associated to the presheaf $U\mapsto H^i_{Z\times_V U}(U,\mu_{m}^{\otimes j})$, cf.\ \cite[p.\ 85, III.1.6(e)]{milne}.
Then there is a local to global spectral sequence
\begin{align} \label{eq:local-global-ss}
E_2^{p,q}:=H^p(V,\underline H^q_Z(V,\mu_{m}^{\otimes j}))\Longrightarrow H^{p+q}_Z(V,\mu_{m}^{\otimes j}),
\end{align}
see \cite[V.6.4]{SGA4.2}.
Since $Z$ and $V$ are regular and $Z$ is of pure codimension $c$ in $V$,   Grothendiecks purity conjecture, proven by Gabber (see \cite[XVI.3.1.1]{ILO} and \cite[2.1.1]{fujiwara}), implies
$$
\underline H^q_Z(V,\mu_{m}^{\otimes j})\cong \begin{cases}
i_\ast i^\ast\mu_{m}^{\otimes j-c}\ \ &\text{if $q=2c$};\\
0\ \ &\text{else},
\end{cases}
$$
where $i:Z\to V$ denotes the closed immersion.
Hence, the spectral sequence (\ref{eq:local-global-ss}) degenerates at $E_2$ and yields a canonical  isomorphism:
\begin{align}\label{eq:purity}
H^{i-2c}(Z,\mu_m^{\otimes j-c}) \stackrel{\cong}\longrightarrow H^i_Z(V,\mu_m^{\otimes j}).
\end{align} 
Combining this with (\ref{eq:les:pair}), the theorem follows.
\end{proof}

\begin{lemma}\label{lem:Gysin-cup}
Let $V$ be a regular Noetherian scheme with a regular closed subscheme $Z\subset V$ of pure codimension $c$ and complement $U:=V\setminus Z$.
Let $m$ be an integer that is invertible on $V$ and let $\alpha\in H^r(V,\mu_{m}^{\otimes s})$.
Then cup product with $\alpha$ is compatible with the Gysin sequence in Theorem \ref{thm:gysin}, i.e.\ there is a commutative diagram:
\begin{align*} 
\xymatrix{
H^i(V,\mu_{m}^{\otimes j})\ar[r]\ar[d]^{\cup \alpha} & H^i(U,\mu_{m}^{\otimes j})\ar[r]\ar[d]^{\cup \alpha|_U} &H^{i+1-2c}(Z,\mu_{m}^{\otimes j-c})\ar[r]\ar[d]^{\cup \alpha|_Z} &H^{i+1}(V,\mu_{m}^{\otimes j}) \ar[d]^{\cup \alpha} \\
H^{i+r}(V,\mu_{m}^{\otimes j+s})\ar[r]&H^{i+r}(U,\mu_{m}^{\otimes j+s})\ar[r]&H^{i+r+1-2c}(Z,\mu_{m}^{\otimes j+s-c})\ar[r]&H^{i+j+1}(V,\mu_{m}^{\otimes j+s}) 
}
\end{align*}
\end{lemma}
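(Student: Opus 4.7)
The plan is to factor the Gysin sequence through the long exact sequence of pairs and the purity isomorphism, and then combine Lemma~\ref{lem:les-pair} with a cup-product compatibility for the purity isomorphism~(\ref{eq:purity}).

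First I would apply Lemma~\ref{lem:les-pair} with $F=\mu_m^{\otimes j}$, $G=\mu_m^{\otimes s}$ (so that $F\otimes G \cong \mu_m^{\otimes j+s}$) and the given class $\alpha\in H^r(V,\mu_m^{\otimes s})$. This immediately gives the analogue of the desired diagram in which the third column reads
\[
\cup \alpha\colon H^{i+1}_Z(V,\mu_m^{\otimes j})\longrightarrow H^{i+1+r}_Z(V,\mu_m^{\otimes j+s}),
\]
and the outer squares as well as the one involving $H^i(U,-)$ and $H^{i+1}_Z(V,-)$ already commute. Thus the only thing left is to compare this with the diagram in the lemma, which uses $H^{i+1-2c}(Z,\mu_m^{\otimes j-c})$ together with cup product by $\alpha|_Z$ via the purity identification~(\ref{eq:purity}).

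The key intermediate step is therefore to establish the following compatibility for the purity isomorphism: under (\ref{eq:purity}), cup product with $\alpha$ on $H^{\ast}_Z(V,\mu_m^{\otimes j})$ corresponds to cup product with $\alpha|_Z$ on $H^{\ast-2c}(Z,\mu_m^{\otimes j-c})$. To prove this I would trace the construction of~(\ref{eq:purity}) through the local-to-global spectral sequence~(\ref{eq:local-global-ss}). Cup product with $\alpha$ acts on the entire spectral sequence; on the $E_2$-page it is cup product with $\alpha$ on $H^p\bigl(V,\underline H^{2c}_Z(V,\mu_m^{\otimes j})\bigr)$. Under Gabber's sheaf-level isomorphism $\underline H^{2c}_Z(V,\mu_m^{\otimes j})\cong i_\ast i^\ast \mu_m^{\otimes j-c}$, the canonical identification $H^p(V,i_\ast i^\ast \mu_m^{\otimes j-c})\cong H^p(Z,\mu_m^{\otimes j-c})$ transforms cup product with $\alpha\in H^r(V,\mu_m^{\otimes s})$ into cup product with $i^\ast\alpha=\alpha|_Z$; this is the projection formula for the closed immersion $i\colon Z\hookrightarrow V$, applied to the $(F,G)$-bilinear pairing of Section~2.4 evaluated on $F=i_\ast i^\ast \mu_m^{\otimes j-c}$ and $G=\mu_m^{\otimes s}$.

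Combining the two ingredients -- the diagram from Lemma~\ref{lem:les-pair} and the just-established compatibility of~(\ref{eq:purity}) with cup product -- then yields the commutative diagram claimed in the lemma. The main obstacle is the purity--projection-formula step: one must verify that the derived-category morphism $i_\ast i^\ast \mu_m^{\otimes j-c}[-2c]\to R\Gamma_Z(\mu_m^{\otimes j})$ underlying~(\ref{eq:purity}) is $\alpha$-equivariant, i.e.\ commutes with the morphisms $\cup\alpha\colon \mu_m^{\otimes j}\to \mu_m^{\otimes j+s}[r]$ and $\cup\alpha|_Z\colon \mu_m^{\otimes j-c}\to \mu_m^{\otimes j+s-c}[r]$ on $V$ and $Z$ respectively. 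This is a standard but tedious check that follows from the naturality of Gabber's purity and the compatibility of cup products with $Ri_\ast$ and $i^\ast$.
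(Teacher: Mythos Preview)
Your proposal is correct and follows essentially the same route as the paper: apply Lemma~\ref{lem:les-pair} to get the diagram with $H^{\ast}_Z(V,-)$ in the third column, and then check that the purity isomorphism~(\ref{eq:purity}) intertwines $\cup\alpha$ with $\cup\alpha|_Z$ by tracing through the local-to-global spectral sequence~(\ref{eq:local-global-ss}). The paper's proof is simply a terser version of exactly this argument, asserting the spectral-sequence compatibility without spelling out the projection-formula step you describe.
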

\begin{proof}
It follows from the construction of the isomorphism in (\ref{eq:purity}) via the spectral sequence (\ref{eq:local-global-ss}) that the cup product map
$$
\cup \alpha:H^{i}_Z(V,\mu_{m}^{\otimes j})\cong \Ext^i(i_\ast \Z_Z,\mu_{m}^{\otimes j})\longrightarrow \Ext^{i+r}(i_\ast \Z_Z,\mu_{m}^{\otimes j+s})\cong H^{i+r}_Z(V,\mu_{m}^{\otimes j+s})
$$
identifies via (\ref{eq:purity}) to the map given by cup product with the restriction of $\alpha$ to $Z$:
$$
\cup \alpha|_{Z}:H^{i-2c}(Z,\mu_m^{\otimes j-c}) \longrightarrow H^{i+r-2c}(Z,\mu_m^{\otimes j+s-c}) .
$$
The commutative diagram in question  thus follows directly
from Lemma \ref{lem:les-pair} and Theorem \ref{thm:gysin}.
This concludes the proof.
\end{proof}

\begin{remark}
In algebraic topology, the cohomology group $H^i_Z(V,-)$ with support corresponds to the relative cohomology group $H^i(V,U;-)$ of the pair $(V,U)$ and the long exact sequence (\ref{eq:les:pair}) identifies to the long exact sequence of the pair $(V,U)$.
Moreover, the isomorphism in (\ref{eq:purity}) corresponds to the Thom isomorphism in differential topology, which asserts that if $Z$ is a closed complex submanifold of a complex manifold $V$ of pure codimension $c$ and with complement $U=V\setminus Z$, then 
$$
H^i(V,U;\Z(j))\cong H^{i-2c}(Z,\Z(j-c)),
$$
where $\Z(j):=(2\pi i)^{j} \cdot \Z\subset \C$ denotes the $j$-th Tate twist of $\Z$.
Combining both results one obtains a topological version of the Gysin sequence in Theorem \ref{thm:gysin}.
The compatibility result with cup products stated in Lemma \ref{lem:les-pair} is in this context proven in \cite[VII.8.10]{dold}.
\end{remark}

\section{Residue maps}

Let $A$ be a discrete valuation ring with fraction field $K$ and residue field $\kappa$. 
Applying Theorem \ref{thm:gysin} to $V=\Spec A$, $Z=\Spec \kappa$ and $U=\Spec K$, we get a long exact sequence of the form
\begin{align} \label{eq:les:residue}
\dots \longrightarrow H^i( A,\mu_m^{\otimes j})\longrightarrow H^i(K,\mu_m^{\otimes {j}})\stackrel{\del}\longrightarrow H^{i-1}(\kappa,\mu_m^{\otimes j-1}) \longrightarrow H^{i+1}(A,\mu_m^{\otimes j})\longrightarrow \dots .
\end{align}
One then defines the residue map
\begin{align} \label{def:residue}
\del_A:=-\del:H^i(K,\mu_m^{\otimes {j}})\longrightarrow H^{i-1}(\kappa,\mu_m^{\otimes j-1}),
\end{align}
as the negative of the above  boundary map $\del$.
Here the minus sign is necessary to make our definition compatible with the definition of the residue map in Galois cohomology or Milnor K-theory, cf.\ \cite[\S 3.3]{CT}. 
By definition, the residue map has the property that its kernel coincides with the image of the natural map
$
H^i( A,\mu_m^{\otimes j})\longrightarrow H^i(K,\mu_m^{\otimes {j}}).
$

\begin{lemma} \label{lem:residue}
Let $A$ be a discrete valuation ring with fraction field $K$ and residue field $\kappa$. 
Then for any $\alpha'\in H^{i_1}(A,\mu_m^{\otimes j_1}) $ with image $\alpha\in H^{i_1}(K,\mu_m^{\otimes j_1}) $  and any $\beta \in H^{i_2}(K,\mu_m^{\otimes j_2})$, we have
$$
\del (\beta\cup \alpha)=(\del\beta)\cup \overline \alpha ,
$$
where $\overline \alpha \in H^{i_1}(\kappa,\mu_m^{\otimes j_1})$ denotes the image of $\alpha'$.
\end{lemma}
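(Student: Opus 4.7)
The plan is to deduce this as a direct corollary of Lemma \ref{lem:Gysin-cup}, applied to the regular Noetherian scheme $V = \Spec A$ together with its closed subscheme $Z = \Spec \kappa$ of pure codimension $c = 1$ and complement $U = \Spec K$. The key observation is that the hypothesis is not merely that $\alpha \in H^{i_1}(K,\mu_m^{\otimes j_1})$ lives on the generic fiber, but that it extends to a class $\alpha' \in H^{i_1}(A,\mu_m^{\otimes j_1})$ on all of $V$; this is exactly what is needed to put $\alpha'$ into the role of the ambient class in Lemma \ref{lem:Gysin-cup}.

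With this choice, the restriction $\alpha'|_U$ is $\alpha$ and the restriction $\alpha'|_Z$ is $\bar\alpha$ by definition. Applying Lemma \ref{lem:Gysin-cup} with $r = i_1$, $s = j_1$, $i = i_2$, $j = j_2$ and reading off the middle square produces a commutative diagram
\begin{equation*}
\xymatrix{
H^{i_2}(K,\mu_m^{\otimes j_2}) \ar[r]^-{\del} \ar[d]_{\cup \alpha} & H^{i_2-1}(\kappa,\mu_m^{\otimes j_2-1}) \ar[d]^{\cup \bar\alpha} \\
H^{i_1+i_2}(K,\mu_m^{\otimes j_1+j_2}) \ar[r]^-{\del} & H^{i_1+i_2-1}(\kappa,\mu_m^{\otimes j_1+j_2-1})
}
\end{equation*}
in which the horizontal maps are the boundary maps of the Gysin sequence (\ref{eq:les:residue}). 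Chasing $\beta$ around both paths yields $\del(\beta \cup \alpha) = (\del \beta) \cup \bar\alpha$. Since the residue map differs from $\del$ only by the sign convention in (\ref{def:residue}), the minus signs cancel on both sides, and the same identity holds equally well for $\del_A$.

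There is essentially no obstacle here, as all the real work was done in establishing Lemma \ref{lem:Gysin-cup}. The one subtlety worth emphasizing is that the extension hypothesis on $\alpha'$ is indispensable: without it, the restriction $\bar\alpha$ would not even be defined, and indeed for general classes on $K$ the residue of a cup product picks up a genuine tame-symbol correction rather than obeying the clean product formula above.
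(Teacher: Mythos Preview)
Your proof is correct and is precisely the intended argument: the paper's own proof consists of the single sentence ``This is a direct consequence of Lemma \ref{lem:Gysin-cup},'' and what you have written is exactly the natural unpacking of that sentence. Your remarks on the sign convention and on the necessity of the extension hypothesis are accurate and appropriate.
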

\begin{proof} This is a direct consequence of Lemma \ref{lem:Gysin-cup}.
\end{proof}

We also need the following well-known lemma, see e.g.\ \cite[Proposition 1.3]{CTO}.

\begin{lemma} \label{lem:del=val}
Let $A$ be a discrete valuation ring with fraction field $K$ and residue field $\kappa$. 
Let $m$ be an integer invertible in $\kappa$ and let $\nu$ denote the valuation on $K$ that corresponds to $A$.
Then the composition
$$
\del:H^1(K,\mu_m)\cong K^\ast/(K^\ast)^m\longrightarrow H^0(\kappa,\Z/m)\cong \Z/m
$$
coincides with the homomorphism that is induced by the valuation $\nu:K^\ast\to \Z$.
\end{lemma}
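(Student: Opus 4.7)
The plan is to invoke the Kummer sequence to pass from \'etale cohomology to explicit multiplicative groups, then identify the kernels of $\del$ and $\nu\pmod{m}$, and finally pin down the normalization by evaluating on a uniformizer.

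First, Hilbert 90 gives $H^1(K,\mathbb{G}_m)=0$, and since $A$ is a DVR we also have $H^1(A,\mathbb{G}_m)=\Pic(A)=0$. The long exact sequences associated to (\ref{seq:Kummer}) over $\Spec A$ and $\Spec K$ therefore yield compatible Kummer isomorphisms $H^1(A,\mu_m)\cong A^\ast/(A^\ast)^m$ and $H^1(K,\mu_m)\cong K^\ast/(K^\ast)^m$, where the natural restriction map between them is induced by the inclusion $A^\ast\hookrightarrow K^\ast$.

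Next, I would identify the two kernels. By exactness of (\ref{eq:les:residue}), the kernel of $\del$ is the image of $A^\ast/(A^\ast)^m$ in $K^\ast/(K^\ast)^m$. On the other hand, if $\pi\in A$ is a uniformizer, then any $x\in K^\ast$ with $\nu(x)\equiv 0\pmod{m}$ can be written as $x=u\pi^{mk}$ for some $u\in A^\ast$ and $k\in\Z$, and hence represents the class of $u$ in $K^\ast/(K^\ast)^m$. Moreover, the natural map $A^\ast/(A^\ast)^m\to K^\ast/(K^\ast)^m$ is injective, because a unit which is an $m$-th power in $K^\ast$ is already an $m$-th power of a unit (its $m$-th root has valuation zero). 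Thus both $\del$ and $\nu\pmod{m}$ are surjective maps with the same kernel, and so they differ by an automorphism of $\Z/m$, i.e., multiplication by some unit $c\in(\Z/m)^\ast$.

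It then suffices to evaluate both maps on the class $(\pi)\in H^1(K,\mu_m)$ of a single uniformizer. Since $\nu(\pi)=1$, the lemma reduces to showing $\del((\pi))=1$ in $\Z/m$. This is where I expect the main difficulty to lie: one must unwind the construction of $\del$ through the local-to-global spectral sequence (\ref{eq:local-global-ss}) and the purity isomorphism (\ref{eq:purity}) applied to the explicit Kummer cocycle for $\pi$, and check that this yields the generator $1$ rather than $-1$ after incorporating the sign in (\ref{def:residue}). The sign convention in (\ref{def:residue}) is chosen precisely so that $c=+1$; this computation is classical and is equivalent to the statement that the \'etale residue agrees with the tame symbol in Milnor $K$-theory in degree one, cf.\ \cite[\S 3.3]{CT} and \cite[Proposition 1.3]{CTO}.
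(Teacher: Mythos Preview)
Your argument is correct as far as it goes: the kernel comparison via the Kummer sequence and Hilbert 90 is clean and shows that $\del$ and $\nu\pmod m$ agree up to a unit in $(\Z/m)^\ast$, and you correctly identify the normalization $\del((\pi))=1$ as the remaining point, deferring it to \cite[\S 3.3]{CT} and \cite[Proposition 1.3]{CTO}.

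There is nothing to compare here, since the paper does not give a proof of this lemma at all---it simply records the statement as well-known and cites \cite[Proposition 1.3]{CTO}. Your write-up therefore goes further than the paper does, but ultimately rests on the same external reference for the sign computation. If you want a self-contained argument for $\del((\pi))=1$, one standard route is to reduce via a limit argument to the case $A=\kappa[t]_{(t)}$ and then compute directly on the \'etale double cover obtained by adjoining an $m$-th root of $t$; alternatively one can appeal to the comparison with Galois cohomology and Serre's explicit description of the boundary map for a complete DVR.
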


\begin{remark}
Since the cup product in \'etale cohomology is graded commutative and bilinear, Lemmas \ref{lem:residue} and \ref{lem:del=val}  completely determine the residue map on symbols $(a_1,\dots , a_n)$  as in (\ref{eq:symbol}), cf.\ \cite[Lemma 2.1]{Sch-JAMS}. 
\end{remark}
%

\subsection{Compatibility with pullbacks}
Let $K'/K$ be a field extension and let $m$ be a positive integer that is invertible in $K$.
We can think about this as a morphism $f:\Spec K'\to \Spec K$ of schemes.
Since \'etale cohomology is contravariant with respect to arbitrary morphisms of schemes (see e.g.\ \cite[p.\ 85, III.1.6(c)]{milne}), we thus get pullback maps
$$
f^\ast :H^i(K,\mu_m^{\otimes j})\longrightarrow H^i(K',\mu_m^{\otimes j}).
$$ 
Let $\nu'$ be a discrete valuation on $K'$ with valuation ring $A'$ and residue field $\kappa_{A'}$.
Assume that the restriction $\nu=\nu'|_K$ of $\nu'$ to $K$ is a nontrivial valuation on $K$ with valuation ring $A$ and residue field $\kappa_A$.
This is a subfield of $\kappa_{A'}$ and so we get a natural morphism $g:\Spec \kappa_{A'} \to \Spec \kappa_A$.

Let $\pi$ be a uniformizer of $A$ and let $e:=\nu'(\pi)$ be the valuation of $\pi$ in the valuation ring $A'$, which is a non-negative integer that measures the ramification of the extension $A\subset A'$.
Then we have a commutative diagram
\begin{align} \label{diag:del-pullbacks}
\xymatrix{
H^i(K',\mu_m^{\otimes j}) \ar[rr]^{\del_{A'}} && H^{i-1}(\kappa_{A'},\mu_m^{\otimes j-1})\\
H^i(K,\mu_m^{\otimes j})  \ar[u]^{f^\ast} \ar[rr]^{\del_A} && H^{i-1}(\kappa_A,\mu_m^{\otimes j-1}) \ar[u]^{e\cdot g^\ast} 
};
\end{align}
see e.g.\ \cite[p.\ 143]{CTO}, where the statement is proven after identifying the above \'etale cohomology groups with the corresponding Galois cohomology groups.

\subsection{Compatibility with pushforwards}

Let now $K'/K$ be a finite field extension and let $m$ be a positive integer that is invertible in $K$.
We fix a discrete valuation $\nu$  on $K$ with valuation ring $A$.
Let $A'\subset K'$ be the integral closure of $A$.
By the Krull--Akizuki theorem (see \cite[p.\ 500, Ch. VII, \S 2, no.\ 5, Prop.\ 5]{bourbaki}), $A'$ is a Dedekind domain with finitely many maximal ideals $\mathfrak m_1,\dots, \mathfrak m_r$.
Each localization $A'_l:=A'_{\mathfrak m_l}$ is a discrete valuation ring with fraction field $K'$ and we denote its residue field by $\kappa_{A'_l}$, which (by the aforementioned Krull Akizuki theorem) is a finite extension of $\kappa_A$. 
In particular, the natural morphisms $f:\Spec K'\to \Spec K$ and $g_l:\Spec \kappa_{A'_l} \to \Spec \kappa_A$ are finite, hence proper, and  we get pushforward morphisms
$$
f_\ast :H^i(K',\mu_m^{\otimes j})\longrightarrow H^i(K,\mu_m^{\otimes j}) \ \ \text{and}\ \ (g_l)_\ast :H^i(\kappa_{A'_l},\mu_m^{\otimes j})\longrightarrow H^i(\kappa_A,\mu_m^{\otimes j})  ,
$$ 
for all $i$ and all $l=1,\dots ,r$.
Under the additional assumption that $A'$ is a finite $A$-module, we then get the following compatibility result.

\begin{lemma}
Let $K'/K$ be a finite field extension and let $m$ be a positive integer that is invertible in $K$.
Let $\nu$ be a discrete valuation on $K$ with valuation ring $A\subset K$ and let $A'\subset K'$ be the integral closure of $A$ in $K'$. 
Assume that $A'$ is a finite $A$-module. 
Then in the above notation, the following diagram is commutative:
\begin{align} \label{diag:del-pushforward}
\xymatrix{
H^i(K',\mu_m^{\otimes j}) \ar[d]^{f_\ast}  \ar[rrr]^{\sum_l \del_{A'_l}} &&& \bigoplus_{l=1}^r H^{i-1}(\kappa_{A'_l},\mu_m^{\otimes j-1}) \ar[d]^{\sum_l (g_l)_\ast}  \\
H^i(K,\mu_m^{\otimes j})  \ar[rrr]^{\del_A} &&& H^{i-1}(\kappa_A,\mu_m^{\otimes j-1}) 
}.
\end{align} 
\end{lemma}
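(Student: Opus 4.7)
The plan is to derive this from the long exact sequence of a pair (Proposition \ref{prop:les-pair}) applied to $(V, Z) = (\Spec A, \Spec \kappa_A)$ and $(V', \tilde Z') = (\Spec A', \Spec(A'/\mathfrak{m}_A A'))$, combined with Gabber's purity isomorphism (\ref{eq:purity}) and the fact that the finite morphism $f \colon V' \to V$ induces a morphism between these sequences.

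First, I would exploit that the square
$$\xymatrix{\Spec K' \ar[r]^-{j'} \ar[d]^{f_U} & V' \ar[d]^f & \tilde Z' \ar[l]_-{\tilde \iota} \ar[d]^{\tilde f} \\ \Spec K \ar[r]^-{j} & V & \Spec \kappa_A \ar[l]_-{i}}$$
is Cartesian, and that since $f$ is finite (by the hypothesis that $A'$ is a finite $A$-module), the functor $f_*$ is exact on torsion \'etale sheaves (as $R^q f_* = 0$ for $q > 0$, finite morphisms being proper with zero-dimensional fibres). The trace maps $f_* \mu_m^{\otimes j} \to \mu_m^{\otimes j}$ and $\tilde f_* \mu_m^{\otimes j} \to \mu_m^{\otimes j}$ then produce, by naturality of the long exact sequence of a pair, a morphism from the sequence on $(V', \tilde Z')$ to that on $(V, \Spec \kappa_A)$ which commutes with the connecting homomorphisms.

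Second, invariance of \'etale cohomology under nilpotent thickenings replaces $\tilde Z' = \Spec(A'/\mathfrak{m}_A A')$ by its reduction $Z' = \bigsqcup_l \Spec \kappa_{A'_l}$, which is a regular closed subscheme of the Dedekind domain $V'$ of pure codimension one; Gabber's purity (\ref{eq:purity}) then converts the resulting long exact sequence into the Gysin sequence on $V'$ with local cohomology terms $\bigoplus_l H^{i-1}(\kappa_{A'_l}, \mu_m^{\otimes j-1})$. Incorporating the sign convention of (\ref{def:residue}) turns the ladder of long exact sequences produced in the previous step into the diagram claimed in the lemma.

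The main obstacle I anticipate is verifying that, under the nilpotent-invariance identification $H^i(\tilde Z', \mu_m^{\otimes j}) \cong \bigoplus_l H^i(\kappa_{A'_l}, \mu_m^{\otimes j})$, the trace pushforward $\tilde f_*$ coincides with $\sum_l (g_l)_*$, and, more substantially, that Gabber's purity intertwines the finite pushforward on local cohomology with the corresponding pushforward on residue-field cohomology. This reduces to tracking the local-to-global spectral sequence (\ref{eq:local-global-ss}) under proper base change along $\tilde f$, exploiting the fact that the trace in \'etale cohomology factors through the reduced structure (so that no ramification index $e_l$ enters, in contrast to the pullback diagram (\ref{diag:del-pullbacks})).
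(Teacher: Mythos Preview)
Your proposal is correct and follows essentially the same route as the paper: both arguments use finiteness of $f$ to identify the Gysin sequence on $\Spec A'$ with the long exact sequence of the pair $(\Spec A,\Spec\kappa_A)$ for the sheaf $f_\ast\mu_m^{\otimes j}$, and then apply the norm/trace map $f_\ast\mu_m^{\otimes j}\to\mu_m^{\otimes j}$ to obtain the desired morphism of long exact sequences. The paper works directly with the reduced closed subscheme $Z'=\bigsqcup_l\Spec\kappa_{A'_l}$ rather than the scheme-theoretic fibre $\tilde Z'$, but since cohomology with support only sees the underlying closed set this amounts to the same thing as your nilpotent-invariance step; the compatibility of purity with the trace pushforward that you flag as the main obstacle is likewise asserted rather than spelled out in the paper.
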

\begin{proof}
Let $V':=\Spec A'$ and $U':=\Spec K'$.
Let further $Z'\subset V'$ be  the union of the finitely many closed points $\Spec \kappa_{A'_l}$, $l=1,\dots ,r$.
Similarly, we put $V:=\Spec A$, $U:=\Spec K$ and $Z:=\Spec \kappa_A$.
%
Since $A'$ and $A$ are normal of dimension one, they are both regular and so the  Gysin sequence from Theorem \ref{thm:gysin} holds for  $(V,Z)$ and $(V',Z')$.
We thus have exact sequences
\begin{align}\label{eq:les:del-pushforward-1}
 H^i(A',\mu_m^{\otimes j})\longrightarrow H^i(K',\mu_m^{\otimes {j}})\stackrel{\sum_l \del_{A'_l}} \longrightarrow \bigoplus_l H^{i-1}(\kappa_{A'_l},\mu_m^{\otimes j-1}) \longrightarrow H^{i+1}(A',\mu_m^{\otimes j}) ,
\end{align}
and
\begin{align}\label{eq:les:del-pushforward-2}
 H^i(A,\mu_m^{\otimes j})\longrightarrow H^i(K,\mu_m^{\otimes {j}})\stackrel{\del_A}\longrightarrow H^{i-1}(\kappa_A,\mu_m^{\otimes j-1}) \longrightarrow H^{i+1}(A,\mu_m^{\otimes j}) .
\end{align}
Since $A'$ is a finite extension of $A$, there is a natural pushforward map from each of the terms of (\ref{eq:les:del-pushforward-1}) to the respective terms of (\ref{eq:les:del-pushforward-2}).
To prove the lemma, we need to see that these morphisms yield a morphism between the (exact) complexes in (\ref{eq:les:del-pushforward-1}) and (\ref{eq:les:del-pushforward-2}).

Since the natural morphism $f:V'\to V$ is finite, we have $R^pf_\ast \mu_m^{\otimes j}=0$ for $p\geq 1$, see e.g.\ \cite[VI.2.5]{milne}.
Applying the same reasoning to the base change of $f$ to $U$ and $Z$ (which we denote by the same letter $f$), we obtain natural isomorphisms
$$
H^i(U',\mu_m^{\otimes j})\cong H^i(U,f_\ast \mu_m^{\otimes j}),\ \  H^i(V',\mu_m^{\otimes j})\cong H^i(V,f_\ast \mu_m^{\otimes j})\ \ \text{and}\ \ H^i(Z',\mu_m^{\otimes j})\cong H^i(Z,f_\ast \mu_m^{\otimes j}).
$$
The sequence (\ref{eq:les:del-pushforward-1}) is thus by (\ref{eq:Ext=H^i}) isomorphic to the sequence (\ref{eq:les:Ext}) with $F=f_\ast \mu_m^{\otimes j}$, while (\ref{eq:les:del-pushforward-2}) is isomorphic to (\ref{eq:les:Ext}) with $F= \mu_m^{\otimes j}$.
The norm homomorphism induces a natural morphism of \'etale sheaves $ f_\ast \mu_m^{\otimes j}\to \mu_m^{\otimes j} $ on $V$.
This  induces a morphism from the long exact sequence (\ref{eq:les:Ext}) with $F=f_\ast \mu_m^{\otimes j}$ to that for $F= \mu_m^{\otimes j}$.
Using the above identifications, one checks that this morphism of long exact sequences is nothing but the morphism between (\ref{eq:les:del-pushforward-1})  and (\ref{eq:les:del-pushforward-2}) that is induced by the respective pushforward maps.
This concludes the proof of the lemma. 
\end{proof}

\begin{remark}
The commutativity in (\ref{diag:del-pullbacks}) can be proven similarly as above, by replacing the norm map $ f_\ast \mu_m^{\otimes j}\to \mu_m^{\otimes j} $ by the natural map $ \mu_m^{\otimes j}\to f_\ast \mu_m^{\otimes j} $.
Note however that in this case, the pullback map
$
H^i_Z(V,\mu_m^{\otimes j})\longrightarrow H^i_{Z'}(V',\mu_m^{\otimes j})
$
corresponds via the Gysin isomorphism (\ref{eq:purity}) to the morphism
$$
H^{i-2c}(Z,\mu_m^{\otimes j})\longrightarrow H^{i-2c}(Z',\mu_m^{\otimes j})
$$
that is given by $e$ times the pullback morphism, where $e$ denotes the ramification index of the ring extension $A'/A$.
\end{remark}

%
%
 
\subsection{A consequence of  Bloch--Ogus' theorem}

The following result, known as injectivity and codimension 1 purity property for \'etale cohomology, is a consequence of Bloch--Ogus' proof \cite{BO} of the Gersten conjecture for \'etale cohomology, cf.\ \cite[Theorems 3.8.1 and 3.8.2]{CT}.

\begin{theorem}\label{thm:injectivity+purity}
Let $X$ be a variety over a field $k$ and let $m$ be a positive integer that is invertible in $k$.
Let $x$ be a point in the smooth locus of $X$.
Then the following hold:
\begin{enumerate}[(a)]
\item The natural morphism 
\begin{align} \label{eq:O_X,x}
H^i(\mathcal O_{X,x},\mu_m^{\otimes j})\longrightarrow H^i(k(X),\mu_m^{\otimes j})
\end{align}
is injective.
\item A class  $\alpha\in H^i(k(X),\mu_m^{\otimes j})$ lies in the image of (\ref{eq:O_X,x}) if and only if $\alpha$ has trivial residue along each prime divisor on $X$ that passes through $x$.
\end{enumerate} 
\end{theorem}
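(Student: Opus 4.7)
The plan is to deduce (a) and (b) directly from the Bloch--Ogus theorem (i.e.\ the exactness of the Gersten complex in \'etale cohomology), which is cited as input just before the theorem. Set $A:=\mathcal{O}_{X,x}$ and $K:=k(X)=\Frac(A)$. Since $x$ is a smooth point of $X$, the local ring $A$ is regular. By (\ref{eq:direct-limit}), the map (\ref{eq:O_X,x}) is the colimit of the natural maps $H^i(U,\mu_m^{\otimes j})\to H^i(K,\mu_m^{\otimes j})$ over affine open neighborhoods $U\ni x$. The height one primes $\mathfrak{p}\subset A$ correspond bijectively, via localization at $x$, to the prime divisors $D\subset X$ passing through $x$; for each such $\mathfrak{p}$ the discrete valuation ring $A_\mathfrak{p}$ has residue field $k(D)$, and (\ref{def:residue}) yields a residue map
\begin{align*}
\partial_D:=\partial_{A_\mathfrak{p}}:H^i(K,\mu_m^{\otimes j})\longrightarrow H^{i-1}(k(D),\mu_m^{\otimes j-1}).
\end{align*}

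The Bloch--Ogus theorem then asserts that the initial segment of the Gersten complex
\begin{align*}
0\longrightarrow H^i(A,\mu_m^{\otimes j})\longrightarrow H^i(K,\mu_m^{\otimes j})\stackrel{\oplus\partial_D}\longrightarrow\bigoplus_{D\ni x}H^{i-1}(k(D),\mu_m^{\otimes j-1})
\end{align*}
is exact, where the left-hand arrow is the map (\ref{eq:O_X,x}) and the direct sum runs over prime divisors $D\subset X$ through $x$. Exactness at $H^i(A,\mu_m^{\otimes j})$ is precisely the injectivity statement in (a), while exactness at $H^i(K,\mu_m^{\otimes j})$ says that $\alpha\in H^i(K,\mu_m^{\otimes j})$ lies in the image of (\ref{eq:O_X,x}) iff $\partial_D\alpha=0$ for every prime divisor $D$ through $x$, which is (b).

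The substantive content here is the Bloch--Ogus theorem itself, which is invoked as a black box; everything else in the argument is formal. The only technical point worth verifying is that the boundary maps of the Gersten/Cousin complex along height one primes indeed coincide with the residue maps $\partial_{A_\mathfrak{p}}$ of (\ref{def:residue}); this follows from the pullback compatibility (\ref{diag:del-pullbacks}) applied to the localization $A\to A_\mathfrak{p}$, together with the fact that the Gersten differential is by construction the sum of the local Gysin boundary maps from Theorem \ref{thm:gysin}, which differ from $\partial_{A_\mathfrak{p}}$ only by the sign fixed in (\ref{def:residue}). Both statements (a) and (b) are also recorded explicitly in \cite[Theorems 3.8.1, 3.8.2]{CT}.
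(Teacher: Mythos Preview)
Your proposal is correct and matches the paper's approach: the paper does not give a detailed proof but simply states the result as a consequence of Bloch--Ogus' proof of the Gersten conjecture for \'etale cohomology, with a reference to \cite[Theorems 3.8.1 and 3.8.2]{CT}. You have spelled out exactly how exactness of the first two terms of the Gersten complex yields (a) and (b), which is precisely the intended argument.
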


\section{Unramified cohomology}

Let $K/k$ be a finitely generated field extension and let $\nu$ be a discrete valuation on $K$.
We say that $\nu$ is a valuation on $K$ over $k$, if $\nu$ is trivial on $k$. 
The valuation ring $A_\nu\subset K$ of $\nu$ is a discrete valuation ring with fraction field $K$ and we denote its residue field by $\kappa_\nu$.
By (\ref{def:residue}), we get a residue map 
$$
\del_\nu:=\del_{A_\nu}:H^i(K,\mu_m^{\otimes j})\longrightarrow H^{i-1}(\kappa_\nu,\mu_m^{\otimes j-1}),
$$
for any positive integer $m$ that is invertible in $k$.
 
\begin{definition} 
Let $K/k$ be a finitely generated field extension.
A geometric valuation $\nu$ on $K$ over $k$ is a discrete valuation on $K$ over $k$ such that the transcendence degree of $\kappa_\nu$ over $k$ is given by 
$$
\operatorname{trdeg}_k(\kappa_\nu)= \operatorname{trdeg}_k(K)-1 .
$$ 
\end{definition}

The following lemma goes back to Zariski, see e.g.\  \cite[Lemma 2.45]{kollar-mori} or \cite[Proposition 1.7]{merkurjev}.

\begin{lemma} \label{lem:geometric-val}
Let $K/k$ be a finitely generated field extension.
A discrete valuation $\nu$ on $K$ over $k$ is geometric if and only if there is a normal $k$-variety $Y$ with $k(Y)\cong K$ such that the valuation $\nu$ corresponds to a prime divisor $E$ on $Y$, i.e.\ for any $\phi\in K^\ast$, $\nu(\phi)=\ord_E(\phi)$, where we think about $\phi$ as a rational function on $Y$.
\end{lemma}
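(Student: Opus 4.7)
The plan is to prove the two implications separately. The direction ($\Leftarrow$) is immediate: if $Y$ is a normal $k$-variety with $k(Y)\cong K$ and $E\subset Y$ is a prime divisor with generic point $\eta_E$, then $\mathcal O_{Y,\eta_E}$ is a DVR of $K$ by normality in codimension one, its valuation is $\phi\mapsto \ord_E(\phi)$, and its residue field $k(E)$ has transcendence degree $\dim E=\dim Y-1=\operatorname{trdeg}_k K -1$ over $k$, matching the geometric condition on $\nu$.

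For the nontrivial direction ($\Rightarrow$), my plan is to build a concrete model $Y$ inside $A_\nu$. Set $d:=\operatorname{trdeg}_k \kappa_\nu=\operatorname{trdeg}_k K-1$, choose a transcendence basis $\bar a_1,\dots,\bar a_d$ of $\kappa_\nu/k$, and lift it to $a_1,\dots,a_d\in A_\nu$. Pick a uniformizer $t$ of $\nu$, and pick generators $x_1,\dots,x_n$ of $K$ over $k$; after replacing $x_i$ by $1/x_i$ whenever $\nu(x_i)<0$, one may assume $x_i\in A_\nu$ for all $i$. Consider the finitely generated $k$-subalgebra
$$
R:=k[x_1,\dots,x_n,a_1,\dots,a_d,t]\subset A_\nu,
$$
which has $\Frac(R)=K$, and let $Y$ be the normalization of $\Spec R$, a normal affine $k$-variety with $k(Y)\cong K$ (finiteness of the normalization uses that finitely generated $k$-algebras are Japanese/Nagata).

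Since $A_\nu$ is integrally closed in $K$ and contains $R$, it contains the coordinate ring $R'$ of $Y$, giving a morphism $\Spec A_\nu\to Y$; let $\eta\in Y$ be the image of the closed point. The images of $a_1,\dots,a_d$ in $\kappa(\eta)\subset \kappa_\nu$ are $\bar a_1,\dots,\bar a_d$, algebraically independent over $k$, so $\operatorname{trdeg}_k \kappa(\eta)\ge d=\dim Y-1$ and hence $\codim_Y\eta\le 1$. On the other hand $t\in \mathfrak m_\nu\cap R'$ vanishes at $\eta$, so $\eta$ is not the generic point of $Y$ and $\codim_Y\eta\ge 1$. Therefore $\eta$ is the generic point of a prime divisor $E\subset Y$; the local ring $\mathcal O_{Y,\eta}$ is then a DVR of $K$ dominated by $A_\nu$, and since two DVRs of the same field with a domination relation coincide, $A_\nu=\mathcal O_{Y,\eta}$, yielding $\nu(\phi)=\ord_E(\phi)$ for all $\phi\in K^\ast$. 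The one step requiring care is pinning the codimension of the center of $\nu$ on $Y$ to exactly one, which is precisely why $R$ must contain both the uniformizer $t$ (forcing positive codimension) and a lift of a full transcendence basis of $\kappa_\nu/k$ (forcing codimension at most one).
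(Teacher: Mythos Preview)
Your proof is correct. Note, however, that the paper does not actually supply its own proof of this lemma: it merely attributes the result to Zariski and cites \cite[Lemma 2.45]{kollar-mori} and \cite[Proposition 1.7]{merkurjev}. So there is nothing in the paper to compare against line by line.

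That said, your argument follows the standard route found in those references: realize $A_\nu$ as the local ring at a codimension-one point of a normal model. The key ingredients---ensuring $\Frac(R)=K$ by including field generators, forcing the center $\eta$ to have positive codimension by including a uniformizer $t$, bounding $\codim_Y\eta\le 1$ via a lifted transcendence basis of $\kappa_\nu$, and concluding $A_\nu=\mathcal O_{Y,\eta}$ from the fact that a DVR has no proper overrings other than its fraction field---are all handled cleanly. The appeal to finiteness of normalization over a field (Nagata) is the right justification for $Y$ being a variety. One tiny cosmetic point: when you replace $x_i$ by $1/x_i$, you might remark that the resulting elements still generate $K$ over $k$, though this is obvious.
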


\begin{definition}[\cite{CTO,merkurjev}]\label{def:H_nr}
Let $K/k$ be a finitely generated field extension and let $m$ be a positive integer that is invertible in $k$.
We define the unramified cohomology of $K$ over $k$ with coefficients in $\mu_m^{\otimes j}$ as the subgroup
$$
H^i_{nr}(K/k,\mu_m^{\otimes j})\subset H^i(K,\mu_m^{\otimes j})
$$
that consists of all elements $\alpha \in H^i(K,\mu_m^{\otimes j})$ such that for any geometric valuation $\nu$ on $K$ over $k$, we have $\del_\nu(\alpha)=0$.
\end{definition}


\begin{remark} \label{rem:def-H_nr-CTO}
Unramified cohomology was introduced by Colliot-Thélène and Ojanguren \cite[Definition 1.1.1]{CTO} into the subject.
In their original definition (that is also used in the survey \cite{CT}), they ask $\del_\nu \alpha=0$ for any discrete valuation $\nu$ of $K$ over $k$.
The version in Definition \ref{def:H_nr} where one  restricts to geometric valuations has later been introduced by Merkurjev \cite[\S 2.2]{merkurjev}.
It follows from Proposition \ref{prop:smooth-model} below that both definitions coincide if there is a smooth projective variety $X$ over $k$ with $k(X)=K$.
In particular, both notions coincide for any finitely generated field extension $K/k$ if $k$ is perfect and resolution of singularities is known for varieties over the field $k$ (e.g.\ $\operatorname{char}(k)=0$).
In general it seems however unclear whether both definitions coincide.
We prefer to work with the definition given above, as it has slightly better formal properties (e.g.\ it admits proper pushforwards, see Proposition \ref{prop:functoriality} below).
\end{remark}

\subsection{Stable invariance}
One of the most important properties of unramified cohomology is the fact that it is a stable birational invariant (cf.\ \cite[Proposition 1.2]{CTO} and \cite[Example 2.3]{merkurjev}), that is, it does not change if one passes from a field $K$ to a purely transcendental extension of $K$.

\begin{lemma} \label{lem:stable-invariance}
Let $K/k$ be a finitely generated field extension and let $m$ be a positive integer that is invertible in $k$.
Let $n\in \N$ and let $f:\A^n_K\to \Spec K$ be the structure morphism.
Then the canonical morphism
$$
f^\ast: H^i_{nr}(K/k,\mu_m^{\otimes j}) \stackrel{\sim} \longrightarrow H^i_{nr}(K(\A^n)/k,\mu_m^{\otimes j}) 
$$
is an isomorphism.
\end{lemma}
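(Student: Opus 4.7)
The plan is to reduce to $n=1$ by induction on $n$: knowing the statement for $n-1$ applied to $K/k$, and the $n=1$ case applied to the finitely generated extension $K(t_1,\dots,t_{n-1})/k$, yields the statement for $n$. For $n=1$, writing $K(t):=K(\A^1)$, the main technical input is a Faddeev-type exact sequence. Passing to the direct limit of the Gysin long exact sequences from Theorem \ref{thm:gysin} for the pairs $(\A^1_K,S)$ as $S$ runs over finite sets of closed points, and combining with the $\A^1$-homotopy invariance $H^i(\A^1_K,\mu_m^{\otimes j})\cong H^i(K,\mu_m^{\otimes j})$ (itself a consequence of $R^qf_\ast\mu_m^{\otimes j}=0$ for $q>0$ together with Leray), one finds that $\alpha\in H^i(K(t),\mu_m^{\otimes j})$ lies in the image of $f^\ast$ if and only if $\del_{A_x}(\alpha)=0$ for every closed point $x\in\A^1_K$, where $A_x:=\mathcal O_{\A^1_K,x}$. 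Injectivity of $f^\ast$ on full cohomology follows separately: the retraction $K[t]_{(t)}\to K$, $t\mapsto 0$, shows $H^i(K)\hookrightarrow H^i(K[t]_{(t)})$, and Theorem \ref{thm:injectivity+purity}(a) applied at the smooth point $t=0$ gives $H^i(K[t]_{(t)})\hookrightarrow H^i(K(t))$.

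I would next check that $f^\ast$ sends $H^i_{nr}(K/k,\mu_m^{\otimes j})$ into $H^i_{nr}(K(t)/k,\mu_m^{\otimes j})$. Given $\beta$ unramified and a geometric valuation $\nu'$ on $K(t)$ over $k$, put $\nu:=\nu'|_K$. If $\nu$ is trivial, then $K\subset A_{\nu'}$, so $f^\ast\beta$ lifts to $H^i(A_{\nu'},\mu_m^{\otimes j})$ and $\del_{\nu'}(f^\ast\beta)=0$ by the exactness of (\ref{eq:les:residue}). If $\nu$ is nontrivial, Abhyankar's inequality applied to $\nu'$ on $K(t)/k$ and to the restriction pair $(\nu',\nu)$ (both value groups are $\Z$, so the relative rational rank is $0$) forces $\operatorname{trdeg}_k(\kappa_\nu)=\operatorname{trdeg}_k(K)-1$, so $\nu$ is automatically geometric on $K/k$; then the compatibility with pullbacks (\ref{diag:del-pullbacks}) yields $\del_{\nu'}(f^\ast\beta)=e\cdot g^\ast\del_\nu(\beta)=0$. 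Surjectivity is then immediate from the Faddeev sequence, since each closed point $x\in\A^1_K$ yields a geometric valuation on $K(t)/k$ (as $\kappa(x)/K$ is finite, $\operatorname{trdeg}_k\kappa(x)=\operatorname{trdeg}_k(K(t))-1$), so any $\alpha\in H^i_{nr}(K(t)/k,\mu_m^{\otimes j})$ has $\del_{A_x}(\alpha)=0$ for all $x$ and therefore equals $f^\ast\beta$ for a unique $\beta\in H^i(K,\mu_m^{\otimes j})$.

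It remains to show that this $\beta$ is itself unramified over $k$. Given a geometric valuation $\nu$ on $K$ over $k$, I would extend it to the Gauss valuation $\nu'(\sum a_it^i):=\min_i\nu(a_i)$ on $K(t)$; this is geometric on $K(t)/k$ with residue field $\kappa_\nu(t)$, and any uniformizer of $\nu$ is also a uniformizer of $\nu'$, so the ramification index equals $1$. By hypothesis $\del_{\nu'}(\alpha)=0$, and (\ref{diag:del-pullbacks}) gives $0=\del_{\nu'}(f^\ast\beta)=g^\ast\del_\nu(\beta)$ for the natural map $g:\Spec\kappa_\nu(t)\to\Spec\kappa_\nu$; the very same retraction/purity argument applied to the field $\kappa_\nu$ shows $g^\ast$ is injective, whence $\del_\nu(\beta)=0$. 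The main obstacle is not the derivation of the Faddeev sequence, which is routine from Theorem \ref{thm:gysin} and homotopy invariance, but the bookkeeping around valuations: one must verify that the two classes of geometric valuations on $K(t)$ over $k$ — those restricting trivially versus nontrivially to $K$ — together suffice to test both directions, and that Gauss extensions combined with Abhyankar's inequality correctly reconcile the unramified condition taken relative to $k$ with the inductive step.
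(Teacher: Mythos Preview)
Your proof is correct and follows essentially the same route as the paper: reduce to $n=1$ by induction, use Faddeev-type exactness to write any unramified $\alpha$ as $f^\ast\beta$, and then check $\beta$ is unramified by extending each geometric valuation on $K$ to one on $K(t)$ and using injectivity of pullback on the residue field. The only differences are expository: the paper cites the Faddeev sequence from \cite{GS} rather than deriving it from Gysin and homotopy invariance, phrases the extension of a valuation $\nu$ on $K$ geometrically (as the divisor $E\times\CP^1$ on $Y\times\CP^1$, where $E\subset Y$ realizes $\nu$) rather than as the Gauss extension, and omits your explicit check via Abhyankar's inequality that $f^\ast$ lands in the unramified subgroup (this is implicit in the paper via the pullback compatibility~(\ref{diag:del-pullbacks})).
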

\begin{proof}
By induction, it suffices to prove the case $n=1$.
By the Faddeev sequence (see \cite[Theorem 6.9.1]{GS}), the following is exact:
$$
0\longrightarrow H^i(K,\mu_m^{\otimes j})\stackrel{f^\ast} \longrightarrow H^i(K(\A^1),\mu_m^{\otimes j})\stackrel{\sum \del_x}\longrightarrow \bigoplus_{x\in \CP^1_K} H^{i-1}(\kappa(x),\mu_m^{\otimes j-1}) ,
$$
where $x$ runs through all closed points of $\CP^1_K$ and $\del_x=\del_{\mathcal O_{\CP^1_K,x}}$.
The zero on the left implies that $f^\ast$ is injective and so it remains to prove surjectivity.
Exactness in the middle of the above sequence shows that any class $\alpha \in H^i_{nr}(K(\A^1)/k,\mu_m^{\otimes j})$ is of the form $f^\ast \beta$ for some $\beta\in H^i(K,\mu_m^{\otimes j})$ and we need to show that if $f^\ast \beta$ is unramified over $k$, $\beta$ must have trivial residue at any geometric valuation of $K$ over $k$.
By Lemma \ref{lem:geometric-val}, any such valuation corresponds to a prime divisor $E$ on some $k$-variety $Y$ with function field $K$.
But then $E\times \CP^1$ is a prime divisor on the $k$-variety $Y\times \CP^1$ with function field $K(\A^1)$ and the fact that $f^\ast \beta$ has trivial residue along this divisor shows by (\ref{diag:del-pullbacks}) that $\beta$ has trivial residue along $E$, as we want. 
(Here we used implicitly the injectivity of the natural map $H^{i-1}(k(E),\mu_m^{\otimes j-1})\to H^{i-1}(k(E)(\A^1),\mu_m^{\otimes j-1})$, which follows from the Faddeev sequence above, applied to $K=k(E)$.)
This concludes the proof of the lemma.
\end{proof}

The above lemma has by (\ref{eq:coho-dimension}) the following immediate consequence. 

\begin{corollary}
Let $X$ be a variety over an algebraically closed field $k$ and let $m$ be a positive integer that is invertible in $k$.
If $X$ is stably rational, then $H^i_{nr}(k(X)/k,\mu_m^{\otimes j})=0$ for all $i>0$.
\end{corollary}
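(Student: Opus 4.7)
The plan is to combine Lemma \ref{lem:stable-invariance} (twice) with the vanishing statement (\ref{eq:coho-dimension}) to reduce the assertion to the trivial vanishing $H^i(k,\mu_m^{\otimes j})=0$ for $i>0$, which holds since $k$ is algebraically closed.

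First I would unpack the hypothesis of stable rationality. By definition, there exists $N\geq 0$ such that $X\times \CP^N_k$ is rational, which means there is a $k$-birational isomorphism $X\times \CP^N_k\dashedlongrightarrow \CP^{n+N}_k$, where $n=\dim X$. Passing to function fields and restricting to the affine parts, this yields a $k$-isomorphism of fields
$$
k(X)(\A^N)\cong k(X\times \CP^N)\cong k(\CP^{n+N})\cong k(\A^{n+N}).
$$
Since unramified cohomology depends only on the finitely generated field extension $K/k$, any $k$-isomorphism of fields induces an isomorphism on unramified cohomology groups.

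Next I would apply Lemma \ref{lem:stable-invariance} in two ways. Applied to the extension $k(X)/k$ with its purely transcendental extension $k(X)(\A^N)$, it gives
$$
H^i_{nr}(k(X)/k,\mu_m^{\otimes j})\cong H^i_{nr}(k(X)(\A^N)/k,\mu_m^{\otimes j}).
$$
Applied instead to the extension $k/k$ and its purely transcendental extension $k(\A^{n+N})$, it gives
$$
H^i_{nr}(k/k,\mu_m^{\otimes j})\cong H^i_{nr}(k(\A^{n+N})/k,\mu_m^{\otimes j}).
$$
Combining these isomorphisms with the $k$-isomorphism $k(X)(\A^N)\cong k(\A^{n+N})$ displayed above yields
$$
H^i_{nr}(k(X)/k,\mu_m^{\otimes j})\cong H^i_{nr}(k/k,\mu_m^{\otimes j}).
$$

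Finally, by the very definition of unramified cohomology as a subgroup of $H^i(k,\mu_m^{\otimes j})$, and by (\ref{eq:coho-dimension}) applied to the $0$-dimensional variety $\Spec k$ (so that $H^i(k,\mu_m^{\otimes j})=0$ for all $i>\dim \Spec k=0$), the right-hand side vanishes for all $i>0$, completing the argument. No step here looks like a genuine obstacle; the only mild point worth double-checking is that the birational map $X\times \CP^N\dashedlongrightarrow \CP^{n+N}_k$ is defined over $k$ (so that the induced isomorphism of function fields restricts to the identity on $k$), which is immediate from the definition of (stable) rationality.
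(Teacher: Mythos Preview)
Your proof is correct and follows essentially the same approach as the paper, which simply states that the corollary is an immediate consequence of Lemma~\ref{lem:stable-invariance} together with (\ref{eq:coho-dimension}); you have merely spelled out the details of that immediate consequence.
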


\subsection{Functoriality}
Unramified cohomology has the following functoriality properties.  

\begin{proposition} \label{prop:functoriality}
Let $K'/K/k$ be a finitely generated fields extensions, let $f:\Spec K'\to \Spec K$ be the natural morphism and let $m$ be an integer that is invertible in $k$.
\begin{enumerate}[(a)]
\item Then $f^\ast :H^i(K,\mu_m^{\otimes j})\to H^i(K',\mu_m^{\otimes j})$ induces a pullback map
$$
f^\ast:H^i_{nr}(K/k,\mu_m^{\otimes j})\longrightarrow H^i_{nr}(K'/k,\mu_m^{\otimes j}) .
$$
\item If $f$ is finite, then   $f_\ast :H^i(K',\mu_m^{\otimes j})\to H^i(K,\mu_m^{\otimes j})$ induces a pushforward map
$$
f_\ast:H^i_{nr}(K'/k,\mu_m^{\otimes j})\longrightarrow H^i_{nr}(K/k,\mu_m^{\otimes j}) 
$$
with $f_\ast \circ f^\ast=\deg(f)\cdot \id$.
\end{enumerate}
\end{proposition}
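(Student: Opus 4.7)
The plan is to reduce both parts to the compatibility diagrams (\ref{diag:del-pullbacks}) and (\ref{diag:del-pushforward}) of Section 3, once I understand how geometric valuations behave under restriction along $K'/K$ (for part (a)) and under integral closure (for part (b)). Given this, vanishing of the relevant residue of $f^\ast\alpha$ (resp.\ $f_\ast\alpha$) at a geometric valuation will follow directly from vanishing of residues of $\alpha$ at the corresponding geometric valuation(s) on the other side. The identity $f_\ast\circ f^\ast=\deg(f)\cdot\id$ in (b) is the standard relation for a finite morphism, obtained by composing the natural map $\mu_m^{\otimes j}\to f_\ast\mu_m^{\otimes j}$ with the trace $f_\ast\mu_m^{\otimes j}\to\mu_m^{\otimes j}$, whose composite is multiplication by $\deg(f)$.

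For (a), given $\alpha\in H^i_{nr}(K/k,\mu_m^{\otimes j})$ and a geometric valuation $\nu'$ on $K'$ over $k$, the essential case is that the restriction $\nu:=\nu'|_K$ is non-trivial; if it is trivial then $K\subset A_{\nu'}$, so $f^\ast\alpha$ lies in the image of $H^i(A_{\nu'},\mu_m^{\otimes j})$ and $\del_{\nu'}(f^\ast\alpha)=0$ by construction of the residue map. Otherwise $\nu$ is automatically discrete of rank one (its value group is a nontrivial subgroup of $\Z=\Gamma_{\nu'}$), and the key point is to show that $\nu$ is geometric. Writing $n:=\operatorname{trdeg}_k(K)$ and $n':=\operatorname{trdeg}_k(K')$, I would lift a transcendence basis of $\kappa_{\nu'}/\kappa_\nu$ to elements of $A_{\nu'}$ and argue (by clearing denominators inside $A_\nu$ and reducing modulo the maximal ideal of $A_{\nu'}$) that these lifts remain algebraically independent over $K$, giving $\operatorname{trdeg}_{\kappa_\nu}(\kappa_{\nu'})\leq n'-n$. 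Combined with $\operatorname{trdeg}_k(\kappa_{\nu'})=n'-1$, this forces $\operatorname{trdeg}_k(\kappa_\nu)\geq n-1$; the reverse bound is Abhyankar's inequality for $\nu$, so $\nu$ is geometric, $\del_\nu\alpha=0$, and (\ref{diag:del-pullbacks}) yields $\del_{\nu'}(f^\ast\alpha)=e\cdot g^\ast(\del_\nu\alpha)=0$.

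For (b), let $\alpha\in H^i_{nr}(K'/k,\mu_m^{\otimes j})$ and let $\nu$ be a geometric valuation on $K$ over $k$. By Lemma \ref{lem:geometric-val}, $A:=A_\nu$ is the local ring of a normal $k$-variety at a prime divisor, hence essentially of finite type over $k$ and therefore excellent; consequently its integral closure $A'\subset K'$ is a finite $A$-module, which is exactly the hypothesis needed to invoke (\ref{diag:del-pushforward}). By Krull--Akizuki each residue field $\kappa_{A'_l}$ is a finite extension of $\kappa_\nu$, and since $K'/K$ is finite one has $\operatorname{trdeg}_k(\kappa_{A'_l})=n-1=n'-1$, so the valuation $\nu_l$ defined by $A'_l$ is itself geometric on $K'$ over $k$. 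Thus $\del_{\nu_l}\alpha=0$ for every $l$, and (\ref{diag:del-pushforward}) gives $\del_\nu(f_\ast\alpha)=\sum_l(g_l)_\ast\del_{\nu_l}\alpha=0$. I expect the main obstacle to be the first reduction in (a): proving that the restriction of a geometric valuation is again geometric whenever it is non-trivial, which is the only step where Abhyankar's inequality genuinely enters. The parallel subtlety in (b) is that one cannot avoid the excellence hypothesis on $A$, since for a general DVR in a finite field extension the integral closure may fail to be finite over it; both reductions once secured, the rest of the argument is a direct unpacking of the compatibility diagrams of Section 3.
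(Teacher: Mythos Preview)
Your argument is correct and follows the same overall strategy as the paper: reduce to the compatibility diagrams (\ref{diag:del-pullbacks}) and (\ref{diag:del-pushforward}). For part (a), the paper simply writes ``follows directly from (\ref{diag:del-pullbacks}),'' whereas you carefully verify the point it suppresses: that the restriction $\nu=\nu'|_K$ of a geometric valuation is either trivial or again geometric, which is what one needs in order to conclude $\del_\nu\alpha=0$. Your transcendence-degree argument combined with Abhyankar's inequality is the standard way to see this, and is correct as stated. For part (b), the paper takes a more geometric route: it chooses normal projective models $Y$ and $Y'$ with a surjection $f:Y'\to Y$ realizing $K'/K$, identifies the integral closure $A'$ with the semi-local ring of $Y'$ at the codimension-one points lying over $y$, and deduces finiteness of $A'/A$ from properness of $f$ together with finiteness of normalization for varieties. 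Your direct appeal to excellence of $A_\nu$ (as the local ring of a normal $k$-variety at a prime divisor) is a cleaner shortcut to the same conclusion; the identification of the extended valuations as geometric is then immediate either way. The two arguments for (b) are essentially equivalent --- yours is more self-contained in commutative-algebra terms, while the paper's makes the underlying geometry (and Lemma \ref{lem:geometric-val}) explicit.
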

\begin{proof}
Item (a) follows directly from  (\ref{diag:del-pullbacks}).

In item (b) the equality $f_\ast \circ f^\ast=\deg(f)\cdot \id$ holds already on the level of \'etale cohomology
 and so it suffices to show that $f_\ast$ is well-defined on the subgroup of unramified classes.
 To this end, let $\alpha\in H^i_{nr}(K'/k,\mu_m^{\otimes j})$ and let $\nu$ be a geometric valuation on $K$ over $k$.
 We then need to show that $\del_\nu f_\ast \alpha=0$.
Since $\nu$ is geometric, we can construct a normal projective variety $Y$ with $k(Y)\cong K$ such that $\nu$ corresponds to the valuation associated to a codimension 1 point $y\in Y^{(1)}$.
Since $K'/K$ is a finite field extension, we can further construct a normal projective variety $Y'$ with a surjective morphism $f:Y'\to Y$ such that $k(Y')\cong K'$ and  $f^\ast:k(Y)\to k(Y')$ corresponds to the given inclusion $K\subset K'$.
Since $f$ is generically finite and surjective, and $y\in Y$ is a codimension 1 point, the reduced preimage $(f^{-1}(y))^{\red}$ is given by finitely many codimension 1 points $y'_1,\dots ,y'_r$ of $Y'$.
We claim that the integral closure $A'\subset K'$ of $A:=\mathcal O_{Y,y}$ in $K'$ is nothing but the local ring of $Y'$ at the finitely many codimension 1 points $y'_1,\dots ,y'_r$ of $Y'$.
Indeed, $\Spec A'\to \Spec A$ can be constructed by first taking the fibre product of the proper morphism $f:Y'\to Y$ with the inclusion $\Spec \mathcal O_{Y,y}\to Y$ and precomposing this with the normalization map   $\Spec A'\to Y'\times_Y \Spec \mathcal O_{Y,y}$.
This description also shows that $\Spec A'\to \Spec A$ is proper, hence finite as it is clearly quasi-finite.
Hence, $A'$ is a finite ring extension of $A$.
Moreover, the localizations of $A'$ at its finitely many maximal ideals are exactly the local rings $\mathcal O_{Y',y'_l}$, $l=1,\dots ,r$.
Since $\alpha \in H^i_{nr}(K'/k,\mu_m^{\otimes j})$ is unramified, we know that
$$
\del_{ \mathcal O_{Y',y'_l}}\alpha=0
$$ 
for all $l=1,\dots ,r$
and so the commutative diagram (\ref{diag:del-pushforward}) shows that  $\del_\nu f_\ast \alpha=0$, as we want.
This completes the proof.
\end{proof}

\subsection{Restriction to scheme points and pullbacks for morphisms between smooth projective varieties}
Theorem \ref{thm:injectivity+purity} has the following important consequences, which show that unramified classes on smooth projective varieties can be restricted to any scheme point.

\begin{proposition} \label{prop:restriction}
Let $X$ be a smooth variety over a field $k$ and let $m$ be a positive integer that is invertible in $k$.
Let $\alpha\in H^i_{nr}(k(X)/k,\mu_m^{\otimes j})$.
\begin{enumerate}[(a)]
\item Then  for any $x\in X$, there is a well-defined restriction 
$$
\alpha|_x\in H^i(\kappa(x),\mu_m^{\otimes j}).
$$
\item If $X$ is also proper over $k$, then $\alpha|_x\in H^i(\kappa(x)/k,\mu_m^{\otimes j})$ is unramified over $k$.
\end{enumerate}
\end{proposition}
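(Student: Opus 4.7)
Part (a) follows by combining Theorem \ref{thm:injectivity+purity} with Lemma \ref{lem:geometric-val}. Given $\alpha \in H^i_{nr}(k(X)/k, \mu_m^{\otimes j})$ and $x \in X$, every prime divisor of $X$ passing through $x$ determines a geometric valuation on $k(X)$ over $k$ by Lemma \ref{lem:geometric-val} applied to $Y = X$, which is normal since it is smooth. As $\alpha$ is unramified, its residue along each such divisor vanishes, so Theorem \ref{thm:injectivity+purity}(b) produces a lift $\tilde\alpha_x \in H^i(\mathcal{O}_{X,x}, \mu_m^{\otimes j})$ of $\alpha$, and this lift is unique by Theorem \ref{thm:injectivity+purity}(a). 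I would then define $\alpha|_x$ as the pullback of $\tilde\alpha_x$ along $\Spec \kappa(x) \hookrightarrow \Spec \mathcal{O}_{X,x}$.

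For part (b), my strategy is to show that $\alpha|_x$ lies in the image of the natural map $H^i(A_\mu, \mu_m^{\otimes j}) \to H^i(\kappa(x), \mu_m^{\otimes j})$, where $A_\mu \subset \kappa(x)$ is the valuation ring of a given geometric valuation $\mu$ on $\kappa(x)/k$; the exact sequence (\ref{eq:les:residue}) defining the residue map then forces $\del_\mu(\alpha|_x) = 0$. The key step, and the one that genuinely uses properness of $X$ (and is what distinguishes (b) from (a)), is to apply the valuative criterion of properness to the proper $k$-variety $X$, the tautological morphism $\Spec \kappa(x) \to X$ landing at $x$, and the DVR $A_\mu$ over $k$. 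This yields a unique extension $\sigma \colon \Spec A_\mu \to X$ whose closed point lands at some $x'' \in \overline{\{x\}} \subset X$.

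Having $\sigma$ in hand, part (a) applied at the smooth point $x''$ provides a unique lift $\tilde\alpha_{x''} \in H^i(\mathcal{O}_{X,x''}, \mu_m^{\otimes j})$ of $\alpha$, and I would pull this lift back along $\sigma$ to obtain the desired class $\bar\alpha \in H^i(A_\mu, \mu_m^{\otimes j})$. The final identification $\bar\alpha|_{\Spec \kappa(x)} = \alpha|_x$ follows from the commutativity of
$$
\xymatrix{
\Spec \kappa(x) \ar[r] \ar[d] & \Spec A_\mu \ar[d]^{\sigma} \\
\Spec \mathcal{O}_{X,x} \ar[r] & \Spec \mathcal{O}_{X,x''}
}
$$
combined with the uniqueness statement in Theorem \ref{thm:injectivity+purity}(a), which forces the image of $\tilde\alpha_{x''}$ in $H^i(\mathcal{O}_{X,x}, \mu_m^{\otimes j})$ to agree with $\tilde\alpha_x$. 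The main obstacle I anticipate is precisely this compatibility between the two lifts $\tilde\alpha_x$ and $\tilde\alpha_{x''}$ at different points of $X$, but it is controlled by the injectivity in Theorem \ref{thm:injectivity+purity}(a) applied at $x$.
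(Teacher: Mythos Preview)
Your proposal is correct and follows essentially the same line as the paper: part (a) is identical, and for (b) both arguments use properness to locate the image $x''$ (the paper's $\iota(z)$) of the center of the valuation in $X$, lift $\alpha$ there via Theorem \ref{thm:injectivity+purity}, and then deduce compatibility with $\alpha|_x$ from the injectivity in Theorem \ref{thm:injectivity+purity}(a). The only cosmetic difference is that the paper realizes the valuation on a normal model $Z$ and spreads $\alpha$ to an open neighbourhood of $\iota(z)$ via the direct limit description (\ref{eq:direct-limit}), whereas you invoke the valuative criterion directly on the DVR $A_\mu$ and work with local rings throughout.
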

\begin{proof}
We begin with the proof of (a).
By part (b) in Theorem \ref{thm:injectivity+purity}, we know that $\alpha$ admits a lift $\tilde \alpha\in H^i(\mathcal O_{X,x},\mu_m^{\otimes j})$ and this lift is unique by part (a) of Theorem \ref{thm:injectivity+purity}.
The restriction $\alpha|_{x}$ may thus be defined as image of $\tilde \alpha$ via the natural morphism
$$
H^i(\mathcal O_{X,x},\mu_m^{\otimes j}) \longrightarrow  H^i(\kappa(x),\mu_m^{\otimes j}).
$$
This concludes the proof of (a).

To prove (b), let $Z$ be a normal variety over $k$ with $k(Z)\cong \kappa(x)$ and let $z\in Z^{(1)}$ be a codimension 1 point of $Z$.
We then have to prove that $\del_z (\alpha|_x)=0$, or, equivalently, that $\alpha|_x$ lies in the image of the natural map
\begin{align} \label{eq:lem:restriciton-2}
 H^i(\mathcal O_{Z,z},\mu_m^{\otimes j})\longrightarrow  H^i(\kappa(x),\mu_m^{\otimes j}) .
\end{align}
Since $X$ is proper, we may up to shrinking $Z$ around $z$ assume that the isomorphism $k(Z)\cong \kappa(x)$  of fields is induced by a morphism of schemes $\iota:Z\to X$ such that the generic point of $Z$ maps to $x\in X$.
Since $\alpha\in H^i_{nr}(k(X)/k,\mu_m^{\otimes j})$ is unramified over $k$, Theorem \ref{thm:injectivity+purity} implies that $\alpha$ lies in 
\begin{align} \label{eq:lem:restriction}
H^i(\mathcal O_{X,\iota(z)},\mu_m^{\otimes j}) \subset H^i(k(X),\mu_m^{\otimes j}) .
\end{align}
By (\ref{eq:direct-limit}), this means that there is an open neighbourhood $U\subset X$ of $\iota(z)$ and a class $\tilde \alpha\in H^i(U,\mu_m^{\otimes j})$ that restricts to $\alpha$ on the generic point $\Spec k(X)$.
Since $\iota(z)$ lies in the closure of the point $x\in X$, it follows that $U$ contains $x$ and so $\tilde \alpha$ has an image in 
$$
H^i(\mathcal O_{X,x},\mu_m^{\otimes j})\subset H^i(k(X),\mu_m^{\otimes j})
$$ 
which must be $\alpha$ by (\ref{eq:lem:restriction}).
In particular, the restriction $\alpha|_x\in H^i(\kappa(x),\mu_m^{\otimes j})$ that we defined above coincides with the image of $\tilde \alpha$ via the natural map
$$
H^i(U,\mu_m^{\otimes j})\longrightarrow  H^i(\kappa(x),\mu_m^{\otimes j}).
$$
Since $\iota(z)\in U$, this shows that $\alpha|_x$ lies in the image of (\ref{eq:lem:restriciton-2}), as we want.
This concludes the proof of the proposition.
\end{proof}

\begin{corollary} \label{cor:pullback}
Let $f:X\to Y$ be a morphism between smooth proper varieties over a field $k$ and let  $m$ be a positive integer that is invertible in $k$.
Then
there is a well-defined pullback map
$$
f^\ast : H^i_{nr}(k(Y)/k,\mu_m^{\otimes j})\longrightarrow  H^i_{nr}(k(X)/k,\mu_m^{\otimes j})
$$
which is given by restricting a given unramified class $\alpha\in H^i_{nr}(k(Y)/k,\mu_m^{\otimes j})$ to the generic point of the image of $f$ and pulling that back to $k(X)$.
\end{corollary}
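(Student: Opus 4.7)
The plan is to derive the corollary by combining Propositions \ref{prop:restriction} and \ref{prop:functoriality}(a) in the obvious way. First, I would fix notation for the image: since $X$ is a variety (hence irreducible), the scheme-theoretic image $\overline{f(X)} \subseteq Y$ is an irreducible closed subset with a unique generic point $y \in Y$, and $f$ factors through a morphism $X \to \overline{f(X)}$ which is dominant. The induced map on function fields therefore yields an inclusion of finitely generated extensions of $k$:
$$
\kappa(y) \;=\; k(\overline{f(X)}) \;\hookrightarrow\; k(X).
$$

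Next, given $\alpha \in H^i_{nr}(k(Y)/k,\mu_m^{\otimes j})$, I would apply Proposition \ref{prop:restriction}(a) to the smooth variety $Y$ at the point $y$ to obtain a well-defined restriction $\alpha|_y \in H^i(\kappa(y),\mu_m^{\otimes j})$. Since $Y$ is also proper over $k$, part (b) of the same proposition upgrades this to an unramified class
$$
\alpha|_y \;\in\; H^i_{nr}(\kappa(y)/k,\mu_m^{\otimes j}).
$$
Applying Proposition \ref{prop:functoriality}(a) to the field extension $\kappa(y) \hookrightarrow k(X)$ then produces an unramified class in $H^i_{nr}(k(X)/k,\mu_m^{\otimes j})$, and I would define $f^\ast \alpha$ to be this class. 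By construction, $f^\ast$ is given by first restricting $\alpha$ to the generic point of the image of $f$ and then pulling back along the field extension $\kappa(y) \subseteq k(X)$, matching the description in the statement.

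No real obstacle is expected, since every ingredient has already been set up; the only points requiring a brief check are independence from choices and compatibility with the group structure. The scheme-theoretic image $\overline{f(X)}$ and its generic point $y$ are canonically associated to $f$, and the lift $\tilde\alpha \in H^i(\mathcal{O}_{Y,y},\mu_m^{\otimes j})$ used to define $\alpha|_y$ is unique by the injectivity statement of Theorem \ref{thm:injectivity+purity}(a) (this uniqueness underlies the proof of Proposition \ref{prop:restriction}(a)). Additivity of $f^\ast$ in $\alpha$ is inherited from the additivity of the restriction maps $H^i(\mathcal{O}_{Y,y},\mu_m^{\otimes j}) \to H^i(\kappa(y),\mu_m^{\otimes j})$ and of the field-extension pullback on étale cohomology, so $f^\ast$ is a well-defined homomorphism, completing the construction.
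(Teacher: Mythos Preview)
Your proof is correct and follows essentially the same route as the paper: restrict $\alpha$ to the generic point of the image via Proposition~\ref{prop:restriction} (using properness of $Y$ to land in unramified cohomology), then pull back along the field extension $\kappa(y)\subset k(X)$ via Proposition~\ref{prop:functoriality}(a). The paper states this in one line, whereas you spell out the well-definedness and additivity carefully, but the argument is the same.
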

\begin{proof}
By Proposition \ref{prop:restriction}, the restriction of $\alpha$ to the generic point of the image of $f$ is unramified over $k$ and so is its pullback to $k(X)$ by Proposition \ref{prop:functoriality}.
This proves the corollary.
\end{proof}

\subsection{It is enough to check residues on a smooth proper model}
The codimension 1 purity property (see item (b) in Theorem \ref{thm:injectivity+purity}) implies the following, cf.\ \cite[Theorem 4.1.1]{CT}.

\begin{proposition} \label{prop:smooth-model}
Let $X$ be a smooth proper variety over a field $k$ and let $m$ be a positive integer that is invertible in $k$.
Then a class $\alpha\in H^i(k(X),\mu_m^{\otimes j})$ is unramified over $k$ if and only if $\alpha$ has trivial residue along any prime divisor on $X$. 
\end{proposition}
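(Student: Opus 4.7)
The ``only if'' direction is immediate from the definition: by Lemma \ref{lem:geometric-val}, any prime divisor on the smooth (hence normal) variety $X$ gives rise to a geometric valuation of $k(X)$ over $k$, so any unramified class has trivial residue along such a divisor. The substance of the statement is the converse, and the plan is to realize an arbitrary geometric valuation as the divisorial valuation on a suitable birational model of $X$, use properness of $X$ to extend the resulting birational map over the generic point of that divisor, and then pull back a Zariski-local lift of $\alpha$ supplied by Theorem \ref{thm:injectivity+purity}(b).

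Concretely, suppose $\alpha\in H^i(k(X),\mu_m^{\otimes j})$ has trivial residue along every prime divisor of $X$, and let $\nu$ be an arbitrary geometric valuation of $k(X)$ over $k$. By Lemma \ref{lem:geometric-val}, I may choose a normal $k$-variety $Y$ with an identification $k(Y)\cong k(X)$ and a codimension one point $\eta_E\in Y$ such that $\nu=\ord_E$. Set $A:=\mathcal{O}_{Y,\eta_E}$, a discrete valuation ring with fraction field $k(Y)\cong k(X)$ and residue field $\kappa_\nu$. Since $X$ is proper over $k$, the valuative criterion of properness supplies a unique morphism $\phi:\Spec A\to X$ extending the natural map $\Spec k(Y)\to X$ coming from the identification of function fields. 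Let $x:=\phi(\eta_E)\in X$; this point automatically lies in the smooth locus because $X$ is smooth.

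By hypothesis $\alpha$ has trivial residue along every prime divisor of $X$ passing through $x$, so Theorem \ref{thm:injectivity+purity}(b) provides a (unique, by part (a)) lift $\tilde\alpha\in H^i(\mathcal{O}_{X,x},\mu_m^{\otimes j})$ of $\alpha$. The morphism $\phi$ factors through any open neighbourhood of $x$ on which $\tilde\alpha$ is defined, so pulling back along the induced local ring homomorphism $\mathcal{O}_{X,x}\to A$ yields a class $\phi^\ast\tilde\alpha\in H^i(A,\mu_m^{\otimes j})$. Functoriality of \'etale cohomology applied to the square whose vertical arrows are the generic point inclusions $\Spec k(Y)\to \Spec A$ and $\Spec k(X)\to X$ shows that the image of $\phi^\ast\tilde\alpha$ in $H^i(k(Y),\mu_m^{\otimes j})=H^i(k(X),\mu_m^{\otimes j})$ is $\alpha$. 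The exact sequence (\ref{eq:les:residue}) applied to $A$ then gives $\del_\nu\alpha=0$, and since $\nu$ was arbitrary $\alpha$ is unramified.

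The principal obstacle is the geometric reduction at the beginning: translating an abstract geometric valuation into an honest prime divisor on a birational model (Lemma \ref{lem:geometric-val}) and extending the induced dominant rational map across the generic point of that divisor (valuative criterion of properness). Once this is in place, everything else is a formal consequence of Theorem \ref{thm:injectivity+purity} and the defining exact sequence (\ref{eq:les:residue}) of the residue map.
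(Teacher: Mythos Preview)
Your proof is correct and follows essentially the same approach as the paper's: realize the geometric valuation as a divisor on a normal birational model $Y$, use properness of $X$ to extend the birational map over the relevant codimension one point, and then apply Theorem \ref{thm:injectivity+purity}(b) to lift $\alpha$ over $\mathcal{O}_{X,x}$ and pull back. The only cosmetic difference is that the paper phrases the extension step as ``a rational map from a normal variety to a proper variety is defined in codimension one'' (so one may shrink $Y$ to make $\phi:Y\to X$ a morphism), whereas you invoke the valuative criterion directly on $\Spec A$; these are of course equivalent.
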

\begin{proof}
One direction is trivial.
For the converse, assume that $\alpha\in H^i(k(X),\mu_m^{\otimes j})$ has trivial residue along any prime divisor on $X$.
We then need to show that for any normal variety $Y$ over $k$ that is birational to $X$ and for any codimension 1 point $y\in Y$, we have $\del_y\alpha=0$.
Equivalently, we need to see that $\alpha$ lies in the image of the natural map
$$
H^i(\mathcal O_{Y,y},\mu_m^{\otimes j})\longrightarrow H^i(k(X),\mu_m^{\otimes j}).
$$
Since $Y$ is normal, the rational map $\phi:Y\dashrightarrow X$ is defined in codimension 1 and so up to shrinking $Y$ around the codimension 1 point $y$, we may assume that $\phi:Y\to X$ is a morphism.
Let $x=\phi(y)\in X$ be the image of $y$.
Then we get a commutative diagram
$$
\xymatrix{
H^i(\mathcal O_{X,x},\mu_m^{\otimes j}) \ar[rd]\ar[dd] & \\
& H^i(k(X),\mu_m^{\otimes j}) \\
H^i(\mathcal O_{Y,y},\mu_m^{\otimes j}) \ar[ru] & 
}
$$
and so our claim follows from part (b) in Theorem \ref{thm:injectivity+purity}.
\end{proof}

\subsection{Comparison with usual cohomology}
Let $X$ be a smooth proper variety over a field $k$ and let $m$ be invertible in $k$.
It follows immediately from Proposition \ref{prop:smooth-model} that the image of the natural map 
$$
H^i(X,\mu_m^{\otimes j})\longrightarrow H^i (k(X),\mu_m^{\otimes j})
$$
lies in the subgroup of unramified classes and so we get a well-defined map
\begin{align} \label{eq:H^i(X)toH_nr}
H^i(X,\mu_m^{\otimes j})\longrightarrow H^i_{nr}(k(X)/k,\mu_m^{\otimes j}) .
\end{align}
It is not hard to show that this is an isomorphism for $i=1$ (see \cite[Proposition 4.2.1]{CT}) and surjective for $i=2$ and $j=1$.
However, starting from $i=3$, this map is in general neither injective nor surjective.

The kernel of (\ref{eq:H^i(X)toH_nr}) consists of all cohomology classes $\alpha\in H^i(X,\mu_m^{\otimes j})$ that vanish on some non-empty Zariski open subset of $X$.
For instance, if $i=2j$ is even and $\alpha=c_j(E)$ is the Chern class of a vector bundle $E$, then $\alpha$ lies in the kernel of (\ref{eq:H^i(X)toH_nr}) because any vector bundle is generically trivial.
For $i=2$ and $j=1$, this observation can be used to prove the following,  see \cite[Proposition 4.2.3]{CT}.

\begin{proposition}
Let $X$ be a smooth projective variety over a field $k$ and let $m$ be invertible in $k$.
For $i=2$ and $j=1$, the natural map (\ref{eq:H^i(X)toH_nr}) is surjective and its kernel is given by the image of $c_1:\Pic X\to H^2(X,\mu_m)$.
In particular, there is a natural isomorphism
$$
H^2_{nr} (k(X)/k,\mu_m)\cong \frac{H^2(X,\mu_m )}{\im (c_1:\Pic X\to H^2(X,\mu_m))} .
$$
\end{proposition}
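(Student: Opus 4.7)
My plan is to read off the result from the Kummer short exact sequence $1 \to \mu_m \to \mathbb{G}_m \xrightarrow{m} \mathbb{G}_m \to 1$ applied on both $X$ and $\Spec k(X)$. On $X$, since $H^1(X, \mathbb{G}_m) = \Pic X$ and $H^2(X, \mathbb{G}_m) = \Br(X)$, the associated long exact sequence yields
$$0 \longrightarrow \Pic(X)/m \xrightarrow{c_1} H^2(X, \mu_m) \longrightarrow \Br(X)[m] \longrightarrow 0,$$
while Hilbert 90 on $k(X)$ gives $H^2(k(X), \mu_m) \cong \Br(k(X))[m]$. By naturality of Kummer with respect to the generic-point restriction, the composition $\Pic(X) \xrightarrow{c_1} H^2(X, \mu_m) \to H^2(k(X), \mu_m)$ factors through $\Pic(k(X)) = 0$, so $c_1(\Pic X)$ lies in the kernel of the map to $H^2_{nr}(k(X)/k, \mu_m) \subset H^2(k(X), \mu_m)$.

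For surjectivity, given $\alpha \in H^2_{nr}(k(X)/k, \mu_m)$, (\ref{eq:direct-limit-2}) supplies a lift $\tilde\alpha \in H^2(U, \mu_m)$ for some nonempty open $U \subset X$. I take $U$ maximal with this property and aim to show $U = X$. If $Z := X \setminus U$ has only components of codimension $\geq 2$, then applying Theorem \ref{thm:gysin} iteratively along a smooth stratification of $Z$ forces $H^2(X, \mu_m) \xrightarrow{\sim} H^2(U, \mu_m)$, because the Gysin terms $H^{3-2c}$ vanish for $c \geq 2$. If instead $Z$ contains a prime divisor $D$, I set $V := U \cup D^\circ$ where $D^\circ$ is the smooth locus of $D$, and apply Gysin to the regular codimension 1 inclusion $D^\circ \hookrightarrow V$ to obtain
$$H^2(V, \mu_m) \longrightarrow H^2(U, \mu_m) \xrightarrow{\partial} H^1(D^\circ, \Z/m).$$
By functoriality of Gysin under shrinking to generic points and (\ref{def:residue}), the image of $\partial(\tilde\alpha)$ in $H^1(k(D), \Z/m)$ equals $\partial_{\ord_D}(\alpha)$, which vanishes because $\alpha$ is unramified. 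A separate application of the Gysin sequence on $D^\circ$ shows $H^1(D^\circ, \Z/m) \hookrightarrow H^1(k(D), \Z/m)$ is injective (the relevant local cohomology vanishes by codimension 1 purity in negative degree), hence $\partial(\tilde\alpha) = 0$ and $\tilde\alpha$ lifts to $V$, contradicting the maximality of $U$.

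For the reverse kernel inclusion, suppose $\alpha \in H^2(X, \mu_m)$ restricts to zero in $H^2(k(X), \mu_m)$. Then its image in $\Br(X)[m]$ restricts to zero in $\Br(k(X))$, and the classical injectivity $\Br(X) \hookrightarrow \Br(k(X))$ for regular $X$ (Auslander--Goldman) forces this image to vanish; the short exact sequence displayed above then places $\alpha$ in $c_1(\Pic X)/m$.

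The main obstacle is the inductive step in the surjectivity argument, precisely the case where $X \setminus U$ contains a codimension 1 component: this is where the unramifiedness hypothesis is used essentially, and where one must combine the residue vanishing with injectivity of $H^1$ on the smooth divisor locus. The Brauer injectivity used in the kernel computation is a purity statement of the same flavor; it can alternatively be bypassed by arguing directly via the Gysin pushforward that $\ker(H^2(X, \mu_m) \to H^2(U, \mu_m))$ is generated by cycle classes of divisors, and hence by $c_1(\Pic X)/m$.
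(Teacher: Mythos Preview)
Your argument is correct, and in fact the paper does not give its own proof of this proposition at all --- it simply refers to \cite[Proposition 4.2.3]{CT}. So there is nothing to compare on the paper's side beyond that citation. The standard proof in \cite{CT} proceeds via the Bloch--Ogus spectral sequence $E_2^{p,q}=H^p_{\mathrm{Zar}}(X,\mathcal H^q)\Rightarrow H^{p+q}(X,\mu_m)$, identifying $H^2_{nr}$ with $H^0(X,\mathcal H^2)$ and reading off the low-degree exact sequence. Your approach is more hands-on: you extend an unramified class across the complement of an open chart by repeated use of the Gysin sequence, and you handle the kernel via the Kummer sequence and $\Br(X)\hookrightarrow\Br(k(X))$. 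Both routes ultimately rest on purity, but yours avoids invoking the spectral sequence machinery and stays closer to the tools developed in Sections~2--4 of the paper.

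A few small imprecisions worth tightening. First, when you write $V:=U\cup D^\circ$, you mean the open subset of $X$ obtained by adjoining to $U$ those points of the regular locus of $D$ that lie outside the other irreducible components of $Z$; as written, $D^\circ$ is a locally closed subscheme, not an open of $X$. Second, the existence of a maximal $U$ is fine by Noetherianity, but you should say explicitly that the stratification of $Z$ used in the codimension-$\geq 2$ step is by \emph{regular} locally closed pieces of pure codimension, so that Theorem~\ref{thm:gysin} applies at each stage. Third, recall from (\ref{def:residue}) that $\del_A=-\del$, so strictly speaking the Gysin boundary $\del(\tilde\alpha)$ restricts to $-\del_{\ord_D}(\alpha)$ rather than $\del_{\ord_D}(\alpha)$; this is harmless since you only use its vanishing. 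Finally, the injectivity $H^1(D^\circ,\Z/m)\hookrightarrow H^1(k(D),\Z/m)$ that you need is indeed a consequence of purity (or, equivalently, of the fact that \'etale $\Z/m$-torsors on a normal scheme are determined by their generic fibre), and is a special case of the injectivity statement in Theorem~\ref{thm:injectivity+purity} combined with (\ref{eq:direct-limit}).
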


It follows from the Kummer sequence (\ref{seq:Kummer}) that the right hand side in the above isomorphism is isomorphic to the $m$-torsion of the Brauer group  $\operatorname{Br}(X)=H^2(X,\mathbb G_m)$ of $X$.

\begin{remark}
For $k=\C$, Colliot-Thélène--Voisin \cite{CTV} found a relation between the third unramified cohomology of a smooth projective variety and the failure of the integral Hodge conjecture for codimension two cycles on $X$.
A  generalization of this result to cycles of arbitrary codimension was recently given in \cite{Sch-refined}.
\end{remark}

\section{Merkurjev's pairing} \label{sec:pairing}

Let $X$ be a smooth proper variety over a field $K$ and let $m$ be invertible in $K$.
(Here it is important to allow $K$ to be non-algebraically closed, e.g.\ the function field of a variety over a smaller field $k$.)
Let $Z_0(X)$ denote the group of 0-cycles on $X$, i.e.\ the free abelian group generated by the closed points of $X$. 

For a closed point $z\in X $, we denote by $f_z:\Spec \kappa(z)\to \Spec K$ the structure morphism.
Following Merkurjev \cite[\S 2.4]{merkurjev}, we then define for any unramified class $\alpha\in H^i_{nr}(K(X)/K,\mu_m^{\otimes j})$ a class
$$
\langle z,\alpha \rangle:=(f_z)_\ast (\alpha|_{z})\in H^i(K,\mu_m^{\otimes j}) ,
$$
where $\alpha|_{z}\in H^i(\kappa(z),\mu_m^{\otimes j})$ denotes the restriction from Proposition \ref{prop:restriction}.
We may extend this definition to arbitrary 0-cycles $z\in Z_0(X)$ linearly and so we obtain a 
bilinear pairing
\begin{align} \label{def:pairing}
Z_0(X)\times H^i_{nr}(K(X)/K,\mu_m^{\otimes j})\longrightarrow H^i(K,\mu_m^{\otimes j}),\ \ (z,\alpha)\mapsto \langle z,\alpha \rangle .
\end{align} 
 
The main result about this pairing is the following proposition due to Merkurjev, which shows that the pairing descends to the level of Chow groups, cf.\ \cite[\S 2.4]{merkurjev}.

\begin{proposition} \label{prop:pairing}
Let $K$ be a field and let $m$ be an integer that is invertible in $K$.
Let $g:C\to X$ be a non-constant morphism between smooth proper $K$-varieties, where $C$ is a curve. 
Then for any  $\alpha\in H^i_{nr}(K(X)/K,\mu_m^{\otimes j})$ and any non-zero rational function $\phi\in K(C)$, we have
$$
\langle g_\ast \operatorname{div}(\phi),\alpha \rangle =0 ,
$$
where $\operatorname{div}(\phi)\in \Div(C)$ denotes the divisor of zeros and poles of $\phi$.
\end{proposition}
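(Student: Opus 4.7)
The plan is to reduce to the case $X=C$ via pullback, and then derive the vanishing from a cohomological reciprocity law on the curve $C$. Set $\beta:=g^\ast\alpha\in H^i_{nr}(K(C)/K,\mu_m^{\otimes j})$, which is well-defined by Corollary~\ref{cor:pullback}. I first claim that
$$
\langle g_\ast z,\alpha\rangle=\langle z,\beta\rangle\quad\text{for every }z\in Z_0(C).
$$
By linearity it suffices to check this when $z$ is a closed point of $C$. Since $g$ is proper and non-constant, $x:=g(z)$ is a closed point of $X$ and $g_\ast z=[\kappa(z):\kappa(x)]\cdot x$. Functoriality of the restriction map in Proposition~\ref{prop:restriction} gives $\beta|_z=(g_z)^\ast(\alpha|_x)$, where $g_z:\Spec\kappa(z)\to\Spec\kappa(x)$ is induced by $g$. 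Factoring $f_z=f_x\circ g_z$ and applying the pushforward--pullback relation $(g_z)_\ast(g_z)^\ast=[\kappa(z):\kappa(x)]\cdot\id$ yields the claim. It therefore suffices to prove $\langle\operatorname{div}(\phi),\beta\rangle=0$ in the case $X=C$.

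Let $(\phi)\in H^1(K(C),\mu_m)$ denote the symbol class attached to $\phi$ and consider $(\phi)\cup\beta\in H^{i+1}(K(C),\mu_m^{\otimes j+1})$. For any closed point $c\in C$, Theorem~\ref{thm:injectivity+purity} lifts $\beta$ uniquely to a class in $H^i(\mathcal O_{C,c},\mu_m^{\otimes j})$ whose image in $\kappa(c)$ is $\beta|_c$. Lemmas~\ref{lem:residue} and~\ref{lem:del=val} combine to give
$$
\partial_c\bigl((\phi)\cup\beta\bigr)=\partial_c(\phi)\cup\beta|_c=\nu_c(\phi)\cdot\beta|_c.
$$
Summing over $c$ and pushing forward to $K$ therefore yields
$$
\langle\operatorname{div}(\phi),\beta\rangle=\sum_{c\in C^{(1)}}\nu_c(\phi)\,(f_c)_\ast(\beta|_c)=\sum_{c\in C^{(1)}}(f_c)_\ast\,\partial_c\bigl((\phi)\cup\beta\bigr),
$$
a finite sum since $\phi$ has only finitely many zeros and poles.

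It thus remains to establish the cohomological reciprocity law
$$
\sum_{c\in C^{(1)}}(f_c)_\ast\,\partial_c\gamma=0\quad\text{for every }\gamma\in H^{i+1}(K(C),\mu_m^{\otimes j+1}).
$$
To this end, apply the Gysin sequence of Theorem~\ref{thm:gysin} to $C$ relative to a finite set $S\subset C^{(1)}$ containing the ramification locus of $\gamma$; passing to the direct limit over $S$ via~(\ref{eq:direct-limit-2}) produces a Faddeev-type exact sequence
$$
H^{i+1}(K(C),\mu_m^{\otimes j+1})\stackrel{(\partial_c)}{\longrightarrow}\bigoplus_{c\in C^{(1)}}H^i(\kappa(c),\mu_m^{\otimes j})\stackrel{\sum(\iota_c)_\ast}{\longrightarrow}H^{i+2}(C,\mu_m^{\otimes j+1}).
$$
Exactness gives $\bigl(\sum(\iota_c)_\ast\bigr)(\partial_c\gamma)_c=0$ in $H^{i+2}(C,\mu_m^{\otimes j+1})$. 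Composing with the proper pushforward $\pi_\ast:H^{i+2}(C,\mu_m^{\otimes j+1})\to H^i(K,\mu_m^{\otimes j})$ associated to $\pi:C\to\Spec K$, and using that $\pi\circ\iota_c=f_c$ together with functoriality of pushforward, one obtains $\sum(f_c)_\ast\partial_c\gamma=\pi_\ast\bigl(\sum(\iota_c)_\ast(\partial_c\gamma)_c\bigr)=0$. The main technical obstacle is precisely the existence of the proper pushforward $\pi_\ast$ (which uses that $C$ is smooth proper of relative dimension~$1$ over $K$) together with the functoriality identity $\pi_\ast\circ(\iota_c)_\ast=(f_c)_\ast$; this is where the essential content of Weil reciprocity enters, and it lies beyond what is explicitly proved in the excerpt.
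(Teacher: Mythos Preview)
Your argument is correct and takes a genuinely different route from the paper. Both proofs begin with the same reduction: the projection formula $\langle g_\ast z,\alpha\rangle=\langle z,g^\ast\alpha\rangle$, which is exactly part~(i) of the paper's Lemma~\ref{lem:pairing}. From there the approaches diverge. You express $\langle\operatorname{div}(\phi),\beta\rangle$ as $\sum_c(f_c)_\ast\partial_c\bigl((\phi)\cup\beta\bigr)$ and then invoke a Weil-type reciprocity law on $C$, obtained from the Gysin sequence together with the trace map $\pi_\ast:H^{i+2}(C,\mu_m^{\otimes j+1})\to H^i(K,\mu_m^{\otimes j})$ for the smooth proper structure morphism. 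The paper instead uses the finite morphism $\varphi:C\to\CP^1_K$ determined by $\phi$ (so that $\operatorname{div}(\phi)=\varphi^\ast(0-\infty)$), applies a second projection formula $\langle\varphi^\ast w,\beta\rangle=\langle w,\varphi_\ast\beta\rangle$ (part~(ii) of Lemma~\ref{lem:pairing}, which rests on the pushforward compatibility~(\ref{diag:del-pushforward})), and is then left with showing that the unramified class $\varphi_\ast g^\ast\alpha\in H^i_{nr}(K(\CP^1)/K,\mu_m^{\otimes j})$ takes the same value at $0$ and at $\infty$; this follows because by the stable invariance Lemma~\ref{lem:stable-invariance} any such class is pulled back from $K$. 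Your route is more direct and conceptually transparent, but, as you note, it leans on the existence and functoriality of proper pushforward in \'etale cohomology, which is standard yet not developed in the paper. The paper's detour through $\CP^1$ trades that input for the Faddeev sequence (already used to prove Lemma~\ref{lem:stable-invariance}) and the diagram~(\ref{diag:del-pushforward}), keeping the argument entirely within the toolkit assembled in the earlier sections.
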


Before we prove Proposition \ref{prop:pairing} in Section \ref{subsubsec:proof:pairing}, let us explain some of its applications.

\subsection{Applications of  Proposition \ref{prop:pairing}} \label{subsubsec:application:pairing} 

\begin{corollary} \label{cor:pairing}
Let $X$ be a smooth proper variety over a field $K$.
Then  (\ref{def:pairing}) descends to a bilinear pairing 
\begin{align} \label{def:pairing:Chow}
\CH_0(X )\times H^i_{nr}(K(X)/K,\mu_m^{\otimes j})\longrightarrow H^i(K,\mu_m^{\otimes j}),\ \ (z,\alpha)\mapsto \langle z,\alpha \rangle .
\end{align}
\end{corollary}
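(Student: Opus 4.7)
The pairing (\ref{def:pairing}) is already $\Z$-bilinear on $Z_0(X)\times H^i_{nr}(K(X)/K,\mu_m^{\otimes j})$ by construction, so the only thing to verify is that $\langle z,\alpha\rangle$ depends only on the class of $z$ in $\CH_0(X)$. The plan is to identify the generators of rational equivalence on $0$-cycles and reduce their vanishing against any unramified $\alpha$ to a direct application of Proposition \ref{prop:pairing}.

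By definition, $\CH_0(X)$ is the quotient of $Z_0(X)$ by the subgroup generated by cycles of the form $(\iota_W)_\ast \operatorname{div}_W(\phi)$, where $W\subset X$ is a closed integral $1$-dimensional subvariety with inclusion $\iota_W:W\hookrightarrow X$ and $\phi\in K(W)^\ast$. To put each such generator into the shape required by Proposition \ref{prop:pairing}, I would pass to the normalization $\nu:C\to W$. Since $X$ is proper over $K$, the closed subvariety $W$ is proper, and hence so is $C$ (as $\nu$ is finite); moreover $C$ is a regular one-dimensional proper $K$-scheme, hence a smooth proper curve over $K$ in the standard setting where $K$ is perfect. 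Composing then gives a non-constant morphism $g:=\iota_W\circ \nu:C\to X$, and the usual compatibility of the divisor map with finite birational pushforward yields $g_\ast\operatorname{div}_C(\nu^\ast\phi)=(\iota_W)_\ast\operatorname{div}_W(\phi)$ in $Z_0(X)$.

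Applying Proposition \ref{prop:pairing} to the pair $(g,\nu^\ast\phi)$ therefore gives $\langle (\iota_W)_\ast\operatorname{div}_W(\phi),\alpha\rangle=0$ for every $\alpha\in H^i_{nr}(K(X)/K,\mu_m^{\otimes j})$. Summing over finitely many such generators, the pairing (\ref{def:pairing}) vanishes on the entire subgroup of rational equivalences in $Z_0(X)$ and thus descends to the desired bilinear pairing (\ref{def:pairing:Chow}). There is no genuine obstacle at this step: all of the analytic content sits inside Proposition \ref{prop:pairing}, and the corollary is a formal deduction once one expresses each generator of rational equivalence on $X$ via a non-constant morphism from a smooth proper curve. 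The only subtlety worth flagging is the passage from $W$ to its smooth proper model $C$, which relies on normalization of $1$-dimensional varieties yielding a regular scheme, coinciding with a smooth curve in the perfect-residue-field setting relevant here.
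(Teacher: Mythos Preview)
Your argument is correct and is exactly the unpacking of the paper's one-line proof, which reads in full: ``This is an immediate consequence of Proposition \ref{prop:pairing}.'' You have simply made explicit the passage from a closed curve $W\subset X$ to its normalization $C$ so that Proposition \ref{prop:pairing} applies, and the subtlety you flag about regular versus smooth over imperfect $K$ is a fair caveat that the paper leaves implicit.
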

\begin{proof}
This is an immediate consequence of Proposition \ref{prop:pairing}.
\end{proof}

The next result is originally due to Karpenko and Merkurjev, see \cite[RC-I]{karpenko-merkurjev}.

\begin{corollary} \label{cor:pairing-Gamma}
Let $X$ and $Y$ be smooth proper varieties over a field $k$ and let $m$ be an integer that is invertible in $k$.
Then there is a bilinear pairing
$$
\CH_{\dim X}(X\times Y)\times H^i_{nr}(k(Y)/k,\mu_m^{\otimes j})\longrightarrow H^i_{nr}(k(X)/k,\mu_m^{\otimes j}),\ \ (\Gamma,\alpha)\mapsto \Gamma^\ast \alpha,
$$
which via linearity is defined as follows: if $\Gamma\subset X\times Y$ is integral and does not dominate the first factor, then $\Gamma^\ast \alpha:=0$; otherwise, the first projection induces a finite morphism $p:\Spec \kappa(\gamma)\to k(X)$, where  $\gamma$ denotes the generic point of $\Gamma$, and we put
$$
\Gamma^\ast \alpha:= p_\ast ( (\pr_2^\ast \alpha)|_{\gamma}).
$$ 
\end{corollary}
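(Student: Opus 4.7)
The plan is to factor the bilinear map $(\Gamma,\alpha)\mapsto \Gamma^\ast\alpha$ through the Merkurjev pairing of Corollary \ref{cor:pairing} applied to the $K$-variety $Y_K := Y\times_k \Spec K$, where $K := k(X)$. Concretely, writing $\Gamma_K := \Gamma\times_X \Spec K$ for the generic fibre of $\Gamma$ over $X$, I aim to prove
$$
\Gamma^\ast \alpha \;=\; \langle\, [\Gamma_K],\ \pr_2^\ast\alpha\,\rangle_{Y_K/K},
$$
where on the right $\pr_2^\ast\alpha$ is viewed as an unramified class on $Y_K$ over $K$, and $[\Gamma_K]\in Z_0(Y_K)$ is the $0$-cycle associated to the scheme-theoretic generic fibre. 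Once this identification is in place, Corollary \ref{cor:pairing} takes care of descent to $\CH_0(Y_K)$, and it only remains to verify that the generic-fibre map $[\Gamma]\mapsto [\Gamma_K]$ descends to a homomorphism $\CH_{\dim X}(X\times Y)\to \CH_0(Y_K)$.

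First I would verify that the stated recipe is well-defined on cycles and lands in unramified cohomology. For integral $\Gamma$ of dimension $\dim X$ dominating $X$, the map $\Gamma\to X$ is generically finite, so $\kappa(\gamma)/K$ is finite and $p$ is indeed a finite morphism of schemes. Corollary \ref{cor:pullback} applied to $\pr_2\colon X\times Y\to Y$ shows that $\pr_2^\ast\alpha\in H^i_{nr}(k(X\times Y)/k,\mu_m^{\otimes j})$; since $X\times Y$ is smooth proper, Proposition \ref{prop:restriction} produces an unramified restriction $(\pr_2^\ast\alpha)|_\gamma \in H^i_{nr}(\kappa(\gamma)/k,\mu_m^{\otimes j})$; Proposition \ref{prop:functoriality}(b) then pushes this forward along the finite $p$ to a class in $H^i_{nr}(K/k,\mu_m^{\otimes j})$. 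Next, a transcendence-degree count (using $\operatorname{trdeg}_k K=\dim X$) shows that every geometric valuation of $K(Y)=k(X\times Y)$ over $K$ is automatically geometric over $k$, so $\pr_2^\ast\alpha$ also lies in $H^i_{nr}(K(Y)/K,\mu_m^{\otimes j})$. The scheme $\Gamma_K$ is the single reduced closed point $z_\Gamma\in Y_K$ with residue field $\kappa(\gamma)$. Using that $\mathcal O_{X\times Y,\gamma}$ and $\mathcal O_{Y_K,z_\Gamma}$ share fraction field $K(Y)$ and residue field $\kappa(\gamma)$, the injectivity statement of Theorem \ref{thm:injectivity+purity} forces the two restrictions of $\pr_2^\ast\alpha$ at $\gamma$ to coincide, which yields the displayed identity. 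Extending $\Gamma\mapsto [\Gamma_K]$ linearly (with the value $0$ when $\Gamma$ does not dominate $X$) produces the generic-fibre homomorphism $Z_{\dim X}(X\times Y)\to Z_0(Y_K)$ under which the identity holds on all cycles.

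For the descent to Chow groups, I would check that for an integral $W\subset X\times Y$ of dimension $\dim X+1$ and $\phi\in k(W)^\ast$, the generic-fibre map sends $\operatorname{div}_W(\phi)$ to $0$ in $\CH_0(Y_K)$. This is clear if $W$ does not dominate $X$, and otherwise $W_K = W\times_X \Spec K$ is an integral $1$-dimensional closed subscheme of $Y_K$, the image is $\operatorname{div}_{W_K}(\phi)$, and this is by the very definition of rational equivalence a boundary on $Y_K$. Combined with Corollary \ref{cor:pairing}, this produces the required bilinear pairing on $\CH_{\dim X}(X\times Y)\times H^i_{nr}(k(Y)/k,\mu_m^{\otimes j})$. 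The principal difficulty I anticipate is the bookkeeping in the middle step: carefully matching the restriction of $\pr_2^\ast\alpha$ in the sense of Proposition \ref{prop:restriction}, computed on the ambient $X\times Y$ over $k$, with the restriction computed on $Y_K$ over $K$, so that the two pairings genuinely agree at the level of cycles before any passage to rational equivalence.
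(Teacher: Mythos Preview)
Your proposal is correct and follows essentially the same route as the paper: both arguments verify well-definedness on cycles via Propositions \ref{prop:restriction} and \ref{prop:functoriality}, then factor the pairing through Merkurjev's pairing on $Y_K$ over $K=k(X)$ and invoke Corollary \ref{cor:pairing} together with the fact that $Z_{\dim X}(X\times Y)\to Z_0(Y_K)$ descends to Chow groups. You are more explicit than the paper about why the diagram commutes (matching the two restrictions at $\gamma$) and about the descent step (checking $\operatorname{div}_W(\phi)$ maps to a boundary), but these are elaborations of steps the paper simply asserts.
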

\begin{proof}
Assume that $\Gamma\subset X\times Y$ is a subvariety of dimension $\dim X$ that dominates $X$ via the first projection and let $\gamma$ be the generic point of $\Gamma$.
By Proposition \ref{prop:restriction}, $ (\pr_2^\ast \alpha)|_{\gamma}\in H^i_{nr}(\kappa(\gamma)/k,\mu_m^{\otimes j})$ is unramified over $k$ and so 
$$
  p_\ast ( (\pr_2^\ast \alpha)|_{\gamma}) \in H^i_{nr}(k(X)/k,\mu_m^{\otimes j})
$$ 
is unramified over $k$ by Proposition \ref{prop:functoriality}.
Hence, our definition of $\Gamma^\ast \alpha$ is well-defined and we get a bilinear pairing
\begin{align} \label{def:pairing-Gamma}
Z_{n}(X\times Y)\times H^i_{nr}(k(Y)/k,\mu_m^{\otimes j})\longrightarrow H^i_{nr}(k(X)/k,\mu_m^{\otimes j}),\ \ (\Gamma,\alpha)\mapsto \Gamma^\ast \alpha .
\end{align}
It remains to see that this pairing descends to the level of Chow groups.
For this, let $K:=k(X)$ denote the function field of $X$.
Then there are natural group homomorphisms
$$
Z_{\dim X}(X\times Y)\longrightarrow Z_0(Y_K)\ \ \text{and}\ \ H^i_{nr}(k(Y)/k,\mu_m^{\otimes j})\longrightarrow H^i_{nr}(K(Y)/K,\mu_m^{\otimes j}) .
$$
Since $K=k(X)$ and $H^i_{nr}(K/k,\mu_m^{\otimes j})\subset H^i(K,\mu_m^{\otimes j})$, this induces a diagram
$$
\xymatrix{
Z_{\dim X}(X\times Y)\times H^i_{nr}(k(Y)/k,\mu_m^{\otimes j})\ar[rd]\ar[dd] & \\
& H^i(K,\mu_m^{\otimes j})\\
Z_0(Y_K)\times H^i_{nr}(K(Y)/K,\mu_m^{\otimes j})\ar[ru] &  
}
$$
which is commutative by the definition of the pairings in (\ref{def:pairing}) and (\ref{def:pairing-Gamma}).
Since the natural map $Z_{\dim X}(X\times Y)\longrightarrow Z_0(Y_K)$ descends to a map $\CH_{\dim X}(X\times Y)\to \CH_0(Y_K)$, we deduce from Corollary \ref{cor:pairing} that the pairing (\ref{def:pairing-Gamma}) satisfies $\Gamma^\ast \alpha=0$ whenever $\Gamma$ is a cycle that is rationally equivalent to 0.
This concludes the proof of the corollary.
\end{proof}

\subsection{Proof of Proposition \ref{prop:pairing}} \label{subsubsec:proof:pairing} 
For the proof of Proposition \ref{prop:pairing}, we will need the following compatibility result for the pairing defined in (\ref{def:pairing}).

\begin{lemma} \label{lem:pairing}
Let  $g:X\to Y$ be a morphism between smooth proper $K$-varieties.
The pairing (\ref{def:pairing}) has the following properties.
\begin{enumerate}[(i)]
\item 
For any $\alpha\in H^i_{nr}(K(Y)/K,\mu_n^{\otimes j})$ and any $z\in Z_0(X)$, we have
$$
\langle g_\ast z,\alpha \rangle= \langle z,g^\ast \alpha\rangle ,
$$
where $g^\ast \alpha\in H^i_{nr}(K(X)/K,\mu_m^{\otimes j})$ is defined by Corollary \ref{cor:pullback}.
\item If $X$ and $Y$ are curves and $g$ is finite and surjective, then for any $\beta\in H^i_{nr}(K(X)/K,\mu_n^{\otimes j})$ and any $w\in Z_0(Y)$, we have
$$
\langle g^\ast w,\beta \rangle=\langle w,g_\ast \beta\rangle,
$$
where  $g_\ast \beta \in H^i_{nr}(K(Y)/K,\mu_m^{\otimes j})$ is defined by Proposition \ref{prop:functoriality} and $g^\ast w$ denotes the flat pullback of cycles.
\end{enumerate}
\end{lemma}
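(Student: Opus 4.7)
For part (i), I would reduce by linearity to a closed point $z=[x]$; set $y:=g(x)$ and denote by $g|_x\colon\Spec\kappa(x)\to\Spec\kappa(y)$ the induced morphism of finite extensions of $K$. Then $g_\ast[x]=[\kappa(x):\kappa(y)]\cdot[y]$ by definition of proper pushforward of zero-cycles, and $f_x=f_y\circ g|_x$. The key compatibility to establish first is
\[
(g^\ast\alpha)|_x = (g|_x)^\ast(\alpha|_y),
\]
which follows directly from the construction in Corollary \ref{cor:pullback}: by Theorem \ref{thm:injectivity+purity} the class $\alpha$ lifts to some $\tilde\alpha\in H^i(U,\mu_m^{\otimes j})$ on an open $U\subset Y$ containing $y$, and pulling $\tilde\alpha$ back to $g^{-1}(U)\subset X$ and restricting to $x$ yields both sides of the claimed equality by functoriality of \'etale cohomology. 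With this identity in hand, part (i) is a one-line computation: $\langle z,g^\ast\alpha\rangle=(f_x)_\ast(g|_x)^\ast(\alpha|_y)=(f_y)_\ast(g|_x)_\ast(g|_x)^\ast(\alpha|_y)=[\kappa(x):\kappa(y)]\cdot(f_y)_\ast(\alpha|_y)=\langle g_\ast z,\alpha\rangle$, where the step $(g|_x)_\ast(g|_x)^\ast=[\kappa(x):\kappa(y)]\cdot\id$ is Proposition \ref{prop:functoriality}(b) applied to the extension $\kappa(x)/\kappa(y)$.

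For part (ii), the heart of the matter is to express $(g_\ast\beta)|_w$ explicitly in terms of the restrictions $\beta|_{x_l}$ at the preimages $x_1,\dots,x_r$ of $w$, with their ramification indices $e_{x_l}$ (so that $g^\ast[w]=\sum_l e_{x_l}[x_l]$ by flatness of $g$). My plan is a cup-product trick. Fix a uniformizer $\pi\in K(Y)^\ast$ at $w$ and consider $(\pi)\in H^1(K(Y),\mu_m)$; by Lemma \ref{lem:del=val}, $\del_w(\pi)=1$ and $\del_{x_l}(g^\ast\pi)=e_{x_l}\cdot\ord_w(\pi)=e_{x_l}$. Introduce the auxiliary class $(g^\ast\pi)\cup\beta\in H^{i+1}(K(X),\mu_m^{\otimes j+1})$. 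The projection formula for the finite morphism $g\colon\Spec K(X)\to\Spec K(Y)$ gives
\[
g_\ast\bigl((g^\ast\pi)\cup\beta\bigr) = (\pi)\cup g_\ast\beta.
\]
Since $g_\ast\beta$ is unramified by Proposition \ref{prop:functoriality}(b), it lifts to $\mathcal O_{Y,w}$, so Lemma \ref{lem:residue} (with the lifted class on the right) yields $\del_w((\pi)\cup g_\ast\beta)=\del_w(\pi)\cup(g_\ast\beta)|_w=(g_\ast\beta)|_w$. On the other side, Diagram (\ref{diag:del-pushforward}) combined with Lemma \ref{lem:residue} applied locally at each $x_l$ (using that $\beta$ lifts to $\mathcal O_{X,x_l}$) gives $\del_w(g_\ast((g^\ast\pi)\cup\beta))=\sum_l (g|_{x_l})_\ast\del_{x_l}((g^\ast\pi)\cup\beta)=\sum_l e_{x_l}(g|_{x_l})_\ast(\beta|_{x_l})$. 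Comparing the two computations produces the crucial formula $(g_\ast\beta)|_w=\sum_l e_{x_l}(g|_{x_l})_\ast(\beta|_{x_l})$; applying $(f_w)_\ast$ and using $f_w\circ g|_{x_l}=f_{x_l}$ then gives exactly $\langle w,g_\ast\beta\rangle=\langle g^\ast w,\beta\rangle$.

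The main obstacle is conceptual rather than technical and lies in part (ii): the definition of the finite pushforward $g_\ast$ involves only function fields, so there is no direct way of reading off from it the ramification contribution needed to produce $\sum_l e_{x_l}(g|_{x_l})_\ast\beta|_{x_l}$ when one restricts to the closed point $w$. The cup product with $(\pi)$ converts the restriction-to-a-point problem into a residue-at-a-valuation problem, where Diagram (\ref{diag:del-pushforward}) handles the pushforward correctly and Lemma \ref{lem:del=val} inserts the ramification index with the right multiplicity. Graded-commutativity signs for the cup product are harmless here as long as Lemma \ref{lem:residue} is applied consistently with the lifted class on the right.
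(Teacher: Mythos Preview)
Your proof is correct and follows essentially the same route as the paper: part (i) is the same reduction to a closed point and the identity $(g_z)_\ast(g_z)^\ast=\deg(g_z)\cdot\id$, and part (ii) is exactly the paper's cup-with-a-uniformizer trick, using the projection formula together with diagram (\ref{diag:del-pushforward}) and Lemmas \ref{lem:residue}, \ref{lem:del=val} to extract the ramification indices. If anything, your write-up is slightly more careful than the paper's about the ordering in the cup product relative to the hypothesis of Lemma \ref{lem:residue}.
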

\begin{proof}
By linearity, it suffices to prove (i) in the case where $z$ is a closed point of $X$.
Let $f_z:\Spec \kappa(z)\to \Spec K$ be the structure morphism.
Then we have
$$
 \langle z,g^\ast \alpha\rangle=(f_z)_\ast (g^\ast \alpha)|_z .
$$
If $y=g(z)$ denotes the image of $z$ in $Y$, with structure morphism $f_y:\Spec \kappa(y)\to \Spec K$, then $g$ induces a morphism $g_z:\Spec \kappa(z)\to \Spec \kappa(y)$ with $f_z=f_y\circ g_z$ and so we find
$$
 \langle z,g^\ast \alpha\rangle
 =(f_z)_\ast (g^\ast \alpha)|_z
 =(f_y\circ g_z)_\ast (g_z^\ast (\alpha|_y))
 =\deg(g_z)\cdot (f_y)_\ast (\alpha|_y) ,
$$
where we used $(g_z)_\ast\circ (g_z)^\ast=\deg(g_z) $.
On the other hand, $g_\ast z=\deg(g_z)\cdot y$ and so
$$
\langle g_\ast z,\alpha \rangle=\langle \deg(g_z)\cdot y,\alpha \rangle=\deg(g_z)\cdot (f_y)_\ast (\alpha|_y).
$$
This proves item (i) of the lemma.

To prove item (ii), it suffices as before to deal with the case where $w$ is a closed point of $Y$.
Since $g$ is finite and surjective and $X$ and $Y$ are both smooth and proper curves, $g$ is flat and so the pullback $g^\ast w$ is defined on the level of cycles.
Explicitly, it is given by
$$
g^\ast w=\sum_{l=1}^r a_r\cdot z_l
$$
where $z_1,\dots ,z_r$ denote the closed points of $X$ that lie above $w$ and where $a_r$ denotes the ramification indices of $\mathcal O_{Y,w}\subset \mathcal O_{X,z_i}$ (recall that $X$ and $Y$ are smooth proper curves by the assumption in (ii)).
Then we have
$$
\langle g^\ast w,\beta \rangle=\sum_{l=1}^r a_l\cdot \langle z_l,\alpha \rangle=\sum_{l=1}^r a_l\cdot (f_{z_l})_\ast (\alpha|_{z_l}),
$$
where $f_{z_l}:\Spec \kappa(z_l)\to \Spec K$ denotes the structure morphism.
On the other hand, if $f_w:\Spec \kappa(w)\to \Spec K$ denotes the structure morphism and $g_{z_l}:\Spec \kappa(z_l)\to \Spec \kappa(w)$ denotes the natural morphism induced by $g$, then
$$
\langle w,g_\ast \beta\rangle=(f_w)_\ast(g_\ast \beta)|_{w} .
$$
To simplify this further, let $\pi\in \mathcal O_{Y,w}$ be a parameter.  
The rational function $\pi$ yields a class $(\pi)\in H^1(k(Y),\mu_m)\cong k(Y)^\ast/(k(Y)^\ast)^m$ and we have by Lemmas \ref{lem:residue} and \ref{lem:del=val} the following well-known formula
$$
(g_\ast \beta)|_{w}=\del_w (g_\ast \beta\cup (\pi)).
$$
By the projection formula,
$$
g_\ast \beta\cup (\pi)=g_\ast(\beta \cup (g^\ast \pi)).
$$
Using the compatibility (\ref{diag:del-pushforward}), we thus get
$$
\del_w (g_\ast \beta\cup (\pi))=\del_w g_\ast(\beta \cup (g^\ast \pi))=\sum_{l=1}^r (g_{z_l})_\ast \del_{z_l}(\beta \cup (g^\ast \pi)).
$$
Since $\beta$ is unramified and $g^\ast \pi\in  \mathcal O_{X,z_l}$ coincides up to a unit with the $a_l$-th power of a parameter of $  \mathcal O_{X,z_l}$, we deduce from Lemmas \ref{lem:residue} and \ref{lem:del=val} that
$$
\del_{z_l}(\beta \cup (g^\ast \pi))=a_l\cdot \beta|_{z_l}.
$$
Putting everything together, this yields
$$
\langle w,g_\ast \beta\rangle=(f_w)_\ast \del_w(g_\ast \beta\cup (\pi))=(f_w)_\ast(\sum_{l=1}^r a_l\cdot  (g_{z_l})_\ast  \beta|_{z_l} ) =
\sum_{l=1}^r a_l\cdot (f_{z_l})_\ast (\alpha|_{z_l}) ,
$$
because $f_{z_l}=f_w\circ g_{z_l}$. 
Hence, $\langle g^\ast w,\beta \rangle=\langle w,g_\ast \beta\rangle$, which concludes the proof of the lemma.
\end{proof}

\begin{proof}[Proof of Proposition \ref{prop:pairing}]
There is a finite morphism $\varphi:C\to \CP^1_K$ with  
$$
\operatorname{div}(\phi)=\varphi^\ast (0-\infty) .
$$ 
By Lemma \ref{lem:pairing}, we thus find
\begin{align*}
\langle g_\ast \operatorname{div}(\phi),\alpha \rangle=\langle g_\ast \varphi^\ast(0-\infty) ,\alpha \rangle&=\langle  \varphi^\ast(0-\infty),g^\ast \alpha \rangle\\
&= \langle  0-\infty,\varphi_\ast g^\ast \alpha \rangle \\
&= \langle  0,\varphi_\ast g^\ast \alpha \rangle -\langle  \infty,\varphi_\ast g^\ast \alpha \rangle  .
\end{align*}
We claim that this last difference vanishes.
To see this, note that $$
\varphi_\ast g^\ast \alpha\in H^i_{nr}(K(\CP^1)/K,\mu_m^{\otimes j}) 
$$
is unramified over $K$ by  Proposition \ref{prop:functoriality} and Corollary \ref{cor:pullback}.
Hence, Lemma \ref{lem:stable-invariance} implies that there is a class $\alpha' \in H^i(K,\mu_m^{\otimes j}) $ with
$$
\varphi_\ast g^\ast \alpha=f^\ast \alpha',
$$
where $f:\CP^1_K\to \Spec K$ denotes the structure morphism.
Using this, we find by the above calculation that
$$
\langle g_\ast \operatorname{div}(\phi),\alpha \rangle= \langle  0,f^\ast \alpha' \rangle -\langle  \infty,f^\ast \alpha' \rangle = \langle  f_\ast 0, \alpha' \rangle -\langle f_\ast \infty, \alpha' \rangle =0,
$$
where we used item (i) of Lemma \ref{lem:pairing} in the second equality and $f_\ast 0=f_\ast \infty \in Z_0(\Spec K)$ in the last equality. 
 This proves the proposition.
\end{proof}

\section{Generalization to schemes with normal crossings} \label{sec:snc}

Let $k$  be a field.
Let $X$ be a pure-dimensional algebraic scheme over $k$.
If $X_i$ with $i\in I$ denote the irreducible components of $X$, then for any non-empty subset $J\subset I$, we define
$$
X_J:=\bigcap_{l\in J} X_l.
$$

\begin{definition}
A pure-dimensional algebraic scheme $X$ over a field $k$ with irreducible components $X_i$ with $i\in I$ is called snc (simple normal crossings)  scheme if for each non-empty subset $J\subset I$, the subscheme $X_J\subset X$ is either empty or smooth of codimension $|J|$, the cardinality of $J$. 
\end{definition}

\begin{definition} \label{def:H_nr-snc}
Let $X$ be a proper snc scheme over a field $k$ and let $m$ be a positive integer invertible in $k$.
We then define the unramified cohomology of $X$ with values in $\mu_m^{\otimes j}$ as the subgroup
$$
H^i_{nr}(X/k,\mu_m^{\otimes j})\subset \bigoplus_{l\in I} H^i_{nr}(k(X_l)/k,\mu_m^{\otimes j}) 
$$
that consists of all collections $\alpha=(\alpha_l)_{l\in I}$ of unramified classes $\alpha_l\in H^i(k(X_l)/k,\mu_m^{\otimes j}) $, such that for all $l,l'\in I$
$$
\alpha_{l}|_{X_l\cap X_{l'}}= \alpha_{l'}|_{X_l\cap X_{l'}},
$$
by which we mean that $\alpha_l$ and $\alpha_{l'}$ have the same restriction to each component of $X_l\cap X_{l'}$ via the restriction maps from Proposition \ref{prop:restriction}.
\end{definition}

If $X$ is a smooth proper variety over $k$, then
$$
H^i_{nr}(X/k,\mu_m^{\otimes j})=H^i_{nr}(k(X)/k,\mu_m^{\otimes j}).
$$
 
For any nonempty subset $J\subset I$, we get a well-defined restriction map
$$
H^i_{nr}(X/k,\mu_m^{\otimes j})\longrightarrow H^i_{nr}(X_J/k,\mu_m^{\otimes j}),\ \ \alpha\mapsto \alpha|_{X_J}
$$
which is defined by picking any index $l\in J$ and defining $ \alpha|_{X_J}$ on each component of $X_J$ as restriction of $\alpha_l$.
This is well-defined (i.e.\ does not depend on the choice of $l$) by the compatibility condition in Definition \ref{def:H_nr-snc}.

\subsection{A pairing on the level of 0-cycles}
Let $X$ be a proper snc scheme over a field $K$ with irreducible components $X_l$, $l\in I$.
Then there is a bilinear pairing 
$$
Z_0(X)\times H^i_{nr}(X/K,\mu_m^{\otimes j})\longrightarrow H^i(K,\mu_m^{\otimes j}),
$$
defined by
\begin{align} \label{def:pairing-snc}
 (z,\alpha)\mapsto \langle z,\alpha \rangle:=\sum_{\emptyset \neq J\subset I} (-1)^{|J|-1} \langle z|_{X_J},\alpha|_{X_J} \rangle ,
\end{align}
where we note that $X_J$ is smooth and where $z|_{X_J}$ denotes the 0-cycle given by intersecting the 0-cycle $z\in Z_0(X)$ with $X_J\subset X$ in the naive sense, i.e.\ we simply keep that part of $z$ that lies on $X_J$ (this is an operation on the level of cycles that does not pass to the level of Chow groups).\footnote{The formula in (\ref{def:pairing-snc}) is motivated by the specialization formula of Nicaise and Shinder in \cite{NS} which itself is motivated by formulas in motivic integration.}

\begin{lemma} \label{lem:pairing-snc}
Let $X$ be a proper snc scheme over a field $K$ and let $m$ be a positive integer that is invertible in $K$.
If $z$ is a closed point of $X$ with structure morphism $f_z:\Spec \kappa(z)\to \Spec K$ and $\alpha\in H^i_{nr}(X/K,\mu_m^{\otimes j})$, then for any component $X_l$ with $z\in X_l$, we have
$$
\langle z,\alpha \rangle= (f_z)_\ast (\alpha_{l}|_{z}) .
$$
\end{lemma}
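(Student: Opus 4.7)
The plan is to reduce the sum defining $\langle z,\alpha\rangle$ to a single term via an inclusion--exclusion computation, using the compatibility condition built into Definition~\ref{def:H_nr-snc}.

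First, I would introduce the set $I_z := \{l \in I : z \in X_l\}$ of components through $z$. Since $z$ is a single closed point, the naive intersection $z|_{X_J}\in Z_0(X_J)$ from (\ref{def:pairing-snc}) equals $z$ if $J \subset I_z$ (so that $z \in X_J$) and is zero otherwise. Hence the sum defining the pairing collapses to
\begin{align*}
\langle z,\alpha\rangle = \sum_{\emptyset \neq J \subset I_z} (-1)^{|J|-1}\, \langle z,\alpha|_{X_J}\rangle_{X_J},
\end{align*}
where $\langle -,-\rangle_{X_J}$ denotes the pairing (\ref{def:pairing}) on the smooth proper variety $X_J$.

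Second, for each such $J$, the definition of $\langle-,-\rangle_{X_J}$ on a single closed point yields $\langle z,\alpha|_{X_J}\rangle_{X_J} = (f_z)_\ast\bigl((\alpha|_{X_J})|_z\bigr)$. Now by construction (Definition~\ref{def:H_nr-snc}), the restriction $\alpha|_{X_J}$ to the component of $X_J$ through $z$ is obtained by restricting $\alpha_{l'}$ for any $l' \in J$, and the compatibility condition for $\alpha$ together with Proposition~\ref{prop:restriction} shows that its further restriction to $z$ agrees with $\alpha_{l'}|_z$, and even with $\alpha_l|_z$ for any fixed component $l \in I_z$ passing through $z$ (by applying the compatibility iteratively, or by restricting directly from the chosen $l$ to the component of $X_J$ through $z$ and then to $z$). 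Thus every term equals $(f_z)_\ast(\alpha_l|_z)$.

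Finally, it remains to note the binomial identity
\begin{align*}
\sum_{\emptyset \neq J \subset I_z} (-1)^{|J|-1} = 1 - (1-1)^{|I_z|} = 1,
\end{align*}
so that all nonzero contributions collapse to the single value $(f_z)_\ast(\alpha_l|_z)$, giving $\langle z,\alpha\rangle = (f_z)_\ast(\alpha_l|_z)$ as claimed. The only mild subtlety—and the part I would be most careful to write out—is verifying that the iterated restriction $(\alpha|_{X_J})|_z$ really equals $\alpha_l|_z$ for the originally chosen component $l$, since $l$ need not belong to $J$; this is however a direct consequence of applying the snc compatibility to the pair $l, l'$ for any $l' \in J \subset I_z$, together with the fact that Proposition~\ref{prop:restriction}(b) makes the restriction to $z$ transitive through intermediate strata.
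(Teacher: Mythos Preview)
Your proof is correct and follows essentially the same route as the paper's: you restrict the alternating sum to the subsets $J\subset I_z$ of components through $z$, use the compatibility condition in Definition~\ref{def:H_nr-snc} to see that every surviving term equals $(f_z)_\ast(\alpha_l|_z)$, and conclude by the binomial identity $\sum_{\emptyset\neq J\subset I_z}(-1)^{|J|-1}=1$. The paper phrases the first reduction as ``replace $X$ by the union of components containing $z$'' rather than introducing $I_z$, but the content is identical.
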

\begin{proof}
%
%
By definition
$$
\langle z,\alpha \rangle= \sum_{\substack{\emptyset \neq J\subset I \\ z\in X_{J}}} (-1)^{|J|-1}  (f_z)_\ast ((\alpha|_{X_J})|_{z}) .
$$ 
Up to replacing $X$ by the union of those components that contain $z$, we may assume that $z\in X_l$ for all $l\in I$.
We then find
$$
\langle z,\alpha \rangle= \sum_{ \emptyset \neq J\subset I } (-1)^{|J|-1}  (f_z)_\ast ((\alpha|_{X_J})|_{z}) .
$$
The compatibility condition in Definition \ref{def:H_nr-snc} ensures that
$$
(f_z)_\ast ((\alpha|_{X_J})|_{z})=(f_z)_\ast ((\alpha|_{X_{J'}})|_{z}) .
$$
for all non-empty $J,J'\subset I$.
Hence, for any $l\in I$, we have
\begin{align*}
\langle z,\alpha \rangle &= \sum_{ \emptyset \neq J\subset I } (-1)^{|J|-1}  (f_z)_\ast (\alpha|_{X_J}|_{z}) \\
&= - \sum_{ \emptyset \neq J\subset I } (-1)^{|J|}  (f_z)_\ast  (\alpha_{l}|_{z})\\
&= -\sum_{r=1}^{|I|} \binom{|I|}{r}(-1)^r \cdot  (f_z)_\ast  (\alpha_{l}|_{z})\\
&= (f_z)_\ast  (\alpha_{l}|_{z}) ,
\end{align*}
where we used in the last equality that $\sum_{r=1}^{|I|} \binom{|I|}{r}(-1)^r=-1$.
This proves the lemma.
\end{proof}

\begin{proposition} \label{prop:pairing-snc}
Let $K$ be a field and let $m$ be an integer that is invertible in $K$.
Let $X$ be a proper snc scheme over $K$ and let  $g:C\to X$ be a non-constant morphism from a smooth proper curve $C$. 
Then for any  $\alpha\in H^i_{nr}(X/K,\mu_m^{\otimes j})$ and any non-zero rational function $\phi\in K(C)$, we have
$$
\langle g_\ast \operatorname{div}(\phi),\alpha \rangle =0 ,
$$
where $\operatorname{div}(\phi)\in \Div(C)$ denotes the divisor of zeros and poles of $\phi$.
\end{proposition}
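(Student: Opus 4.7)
The plan is to reduce Proposition \ref{prop:pairing-snc} to the already established smooth case (Proposition \ref{prop:pairing}), using the simple observation that an irreducible curve mapping non-constantly into $X$ must factor through a single irreducible component, together with the collapsing identity from Lemma \ref{lem:pairing-snc}.

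First, I would reduce to the case where $C$ is integral: a smooth proper curve is the disjoint union of its finitely many connected components, each of which is smooth, proper and connected, hence irreducible and integral. Since $Z_0(C)\to Z_0(X)$ is a direct sum of the contributions of the components and the pairing (\ref{def:pairing-snc}) is linear in the 0-cycle, we may treat each component of $C$ separately and assume $C$ integral.  Next, because $g$ is non-constant and $C$ is an irreducible curve, the image $g(C)\subset X$ is irreducible of dimension $1$; as $X=\bigcup_{l\in I}X_l$ is a finite union of closed subsets, irreducibility forces $g(C)\subset X_{l_0}$ for some $l_0\in I$.  Hence $g$ factors as $g=\iota\circ g'$, with $g':C\to X_{l_0}$ a non-constant morphism into a smooth proper $K$-variety and $\iota:X_{l_0}\hookrightarrow X$ the inclusion, and $g_\ast \operatorname{div}(\phi)$ is a $0$-cycle on $X$ supported on $X_{l_0}$.

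Now each closed point $z$ in the support of $g_\ast\operatorname{div}(\phi)$ lies on $X_{l_0}$, so Lemma \ref{lem:pairing-snc} gives
$$
\langle z,\alpha\rangle \;=\; (f_z)_\ast(\alpha_{l_0}|_z),
$$
which is precisely the value of the smooth pairing (\ref{def:pairing}) on $X_{l_0}$ evaluated at $z$ and at the unramified class $\alpha_{l_0}\in H^i_{nr}(k(X_{l_0})/K,\mu_m^{\otimes j})$.  Extending by linearity,
$$
\langle g_\ast\operatorname{div}(\phi),\alpha\rangle \;=\; \langle g'_\ast\operatorname{div}(\phi),\alpha_{l_0}\rangle_{X_{l_0}},
$$
where the right-hand side is the smooth pairing from (\ref{def:pairing}) on $X_{l_0}$.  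Applying Proposition \ref{prop:pairing} to the non-constant morphism $g':C\to X_{l_0}$ between smooth proper $K$-varieties, the class $\alpha_{l_0}$, and the function $\phi\in K(C)^\ast$ shows that the right-hand side vanishes, which completes the proof.

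I do not expect a serious obstacle: the real content is packaged into Lemma \ref{lem:pairing-snc}, which makes the alternating sum in (\ref{def:pairing-snc}) collapse to the contribution of any chosen component through $z$.  The only point that deserves a line of care is verifying that $g(C)$ is contained in a single irreducible component, which uses only the irreducibility of $g(C)$ and the fact that $X$ has finitely many components; once this is granted, the pairing is computed entirely on the smooth proper variety $X_{l_0}$ and Proposition \ref{prop:pairing} finishes the job.
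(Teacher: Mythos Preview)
Your argument is correct and is exactly the approach the paper takes: the paper's proof is the single sentence ``This is an immediate consequence of Lemma \ref{lem:pairing-snc} and Proposition \ref{prop:pairing},'' and you have simply unpacked that sentence by observing that $g(C)$ lies in a single component $X_{l_0}$, so Lemma \ref{lem:pairing-snc} collapses the snc pairing to the smooth pairing on $X_{l_0}$, where Proposition \ref{prop:pairing} applies.
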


\begin{proof}
This is an immediate consequence of Lemma \ref{lem:pairing-snc} and Proposition \ref{prop:pairing}.
\end{proof}

\begin{corollary} \label{cor:pairing-snc}
The pairing (\ref{def:pairing-snc}) descends to a bilinear pairing
$$
\CH_0(X)\times H^i_{nr}(X/K,\mu_m^{\otimes j})\longrightarrow H^i(K,\mu_m^{\otimes j}) 
$$
on the level of Chow groups.
\end{corollary}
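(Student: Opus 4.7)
The plan is to reduce directly to Proposition \ref{prop:pairing-snc}. Recall that, by Fulton's definition, $\CH_0(X)$ is the quotient of $Z_0(X)$ by the subgroup generated by cycles of the form $\iota_\ast \operatorname{div}(\phi)$, where $W \subset X$ is a one-dimensional integral closed subscheme, $\phi \in K(W)^\ast$, $\operatorname{div}(\phi)$ denotes the associated principal divisor on $W$, and $\iota : W \hookrightarrow X$ is the inclusion. Thus, to show that the pairing (\ref{def:pairing-snc}) descends to $\CH_0(X)$, it suffices to verify that $\langle \iota_\ast \operatorname{div}(\phi), \alpha \rangle = 0$ for every such generator and every $\alpha \in H^i_{nr}(X/K,\mu_m^{\otimes j})$.

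First, I would pass to the normalization $\nu : C \to W$, which is a finite birational morphism with $C$ a smooth proper curve over $K$ (as $X$ is proper, so is $W$). Setting $g := \iota \circ \nu : C \to X$, the standard comparison between the two definitions of $\operatorname{div}$ on a possibly non-normal curve gives $\iota_\ast \operatorname{div}(\phi) = g_\ast \operatorname{div}(\nu^\ast \phi)$ in $Z_0(X)$. If $g$ is non-constant, then Proposition \ref{prop:pairing-snc} applies verbatim and yields $\langle g_\ast \operatorname{div}(\nu^\ast \phi), \alpha \rangle = 0$.

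It remains to handle the case where $g$ is constant with image a single closed point $x \in X$. For each closed point $p \in C$ one has $[\kappa(p):K] = [\kappa(p):\kappa(x)] \cdot [\kappa(x):K]$, so
$$
g_\ast \operatorname{div}(\nu^\ast \phi) \;=\; \Biggl(\sum_{p} \operatorname{ord}_p(\nu^\ast \phi)\cdot [\kappa(p):\kappa(x)]\Biggr)\cdot [x].
$$
The coefficient equals $[\kappa(x):K]^{-1}\cdot \deg_K \operatorname{div}(\nu^\ast \phi) = 0$, because a principal divisor on the smooth proper curve $C$ has degree zero over $K$. Thus $g_\ast \operatorname{div}(\nu^\ast \phi) = 0$ already on the level of $0$-cycles, and the pairing vanishes trivially in this case. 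I do not expect any substantial obstacle beyond fixing conventions for rational equivalence on a possibly reducible scheme; the nontrivial content is entirely absorbed in Proposition \ref{prop:pairing-snc}.
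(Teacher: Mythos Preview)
Your proof is correct and follows the same approach as the paper, which simply records that the result is a direct consequence of Proposition~\ref{prop:pairing-snc}; you have merely spelled out the standard reduction via normalization that the paper leaves implicit. Note that your treatment of the constant case is superfluous: since $g = \iota \circ \nu$ has image the one-dimensional subvariety $W$, the morphism $g$ is never constant.
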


\begin{proof}
This is a direct consequence of Proposition \ref{prop:pairing-snc}.
\end{proof}

\subsection{A pairing on the level of correspondences} \label{subsec:pairing-snc-Gamma}
Let $X$ and $Y$ be proper reduced algebraic schemes over a field $k$ and assume that $Y$ is an snc scheme.
Let further $m$ be an integer that is invertible in $k$.
We aim to define a bilinear pairing
\begin{align} \label{def:pairing-Gamma-snc}
Z_{\dim X}(X\times Y)\times H^i_{nr}(Y/k,\mu_m^{\otimes j})\longrightarrow \bigoplus_{l\in I} H^i (k(X_l) ,\mu_m^{\otimes j}),\ \ (\Gamma,\alpha)\mapsto ((\Gamma^\ast \alpha)_l)_{l\in I} ,
\end{align}
where $X_l$ with $l\in I$ denote the irreducible components of $X$.
It suffices to define $(\Gamma^\ast \alpha)_l$ for each $l\in I$, i.e.\ the composition of (\ref{def:pairing-Gamma-snc}) with the natural projection to  $H^i (k(X_l) ,\mu_m^{\otimes j}) $.
To this end, fix $l\in I$ and note that 
flat pullback induces a natural map 
$$
Z_{\dim X}(X\times Y)\longrightarrow Z_0(Y_{k(X_l)}) ,
$$ 
which descends to the level of Chow groups, i.e.\ sends cycles rationally equivalent to 0 on $X\times Y$ to cycles rationally equivalent to 0 on $Y_{k(X_l)}$.
In addition, there is a natural map $H^i_{nr}(Y/k,\mu_m^{\otimes j})\to H^i_{nr}(Y_{k(X_l)}/k(X_l),\mu_m^{\otimes j})$.
Using this, we define $$
(\Gamma,\alpha)\mapsto (\Gamma^\ast \alpha)_l ,
$$
by asking that the diagram
\begin{align} \label{diag:pairing-Gamma-snc}
\xymatrix{
Z_{\dim X}(X\times Y)\times H^i_{nr}(Y/k,\mu_m^{\otimes j}) \ar[rd]\ar[dd]& \\
 & \bigoplus_l  H^i (k(X_l) ,\mu_m^{\otimes j})\\
Z_0(Y_{k(X_l)})\times \bigoplus_l H^i_{nr} (Y_{k(X_l)}/k(X_l) ,\mu_m^{\otimes j}) \ar[ru]& 
,}
\end{align}
is commutative, where the lower horizontal arrow is induced by (\ref{def:pairing-snc}).

\begin{corollary} \label{cor:pairing-Gamma-snc}
Let $X$ and $Y$ be proper reduced algebraic schemes over a field $k$ and assume that $Y$ is an snc scheme.
Let further $m$ be an integer that is invertible in $k$.
Then the pairing (\ref{def:pairing-Gamma-snc}) descends to a well-defined pairing
\begin{align} \label{def:pairing-Chow-snc}
\CH_{\dim X}(X\times Y)\times H^i_{nr}(Y/k,\mu_m^{\otimes j})\longrightarrow \bigoplus_{l\in I} H^i (k(X_l) ,\mu_m^{\otimes j}).
\end{align}
\end{corollary}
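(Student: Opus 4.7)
The plan is to reduce this to Corollary \ref{cor:pairing-snc} by exploiting the very definition of the pairing via the commutative diagram (\ref{diag:pairing-Gamma-snc}). Concretely, fix a component $X_l$ of $X$; it suffices to show that the $l$-th component $(\Gamma^\ast \alpha)_l \in H^i(k(X_l),\mu_m^{\otimes j})$ only depends on the class of $\Gamma$ in $\CH_{\dim X}(X\times Y)$.

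First, I would verify that $Y_{k(X_l)}$ is again an snc scheme, now over $k(X_l)$. Its irreducible components are the components of $(Y_i)_{k(X_l)}$ for $i\in I$, and because smoothness and codimension are preserved under flat base change, each non-empty intersection of such components is smooth of the expected codimension over $k(X_l)$. Consequently the pairing (\ref{def:pairing-snc}) and hence Corollary \ref{cor:pairing-snc} are available for the snc scheme $Y_{k(X_l)}/k(X_l)$.

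Next, I would invoke the standard fact from intersection theory that the flat pullback
$$
Z_{\dim X}(X\times Y)\longrightarrow Z_0(Y_{k(X_l)}),
$$
obtained by restricting a cycle to the generic fibre of the first projection $\pr_1:X\times Y\to X$ over the generic point of $X_l$, descends to a well-defined homomorphism
$$
\CH_{\dim X}(X\times Y)\longrightarrow \CH_0(Y_{k(X_l)});
$$
this is flat pullback of Chow groups along the flat morphism $Y_{k(X_l)}\to X\times Y$ obtained as base change of $\Spec k(X_l)\to X$ along $\pr_1$. Combined with the natural pullback $H^i_{nr}(Y/k,\mu_m^{\otimes j})\to H^i_{nr}(Y_{k(X_l)}/k(X_l),\mu_m^{\otimes j})$, the commutative diagram (\ref{diag:pairing-Gamma-snc}) then factorizes the map $(\Gamma,\alpha)\mapsto (\Gamma^\ast \alpha)_l$ through $\CH_0(Y_{k(X_l)})\times H^i_{nr}(Y_{k(X_l)}/k(X_l),\mu_m^{\otimes j})$. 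Applying Corollary \ref{cor:pairing-snc} to $Y_{k(X_l)}$ now yields that $(\Gamma^\ast \alpha)_l$ depends only on the class of $\Gamma$ in $\CH_{\dim X}(X\times Y)$, as desired.

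The only point requiring care is the behaviour of the snc hypothesis under the base change $k\subset k(X_l)$ and the corresponding decomposition of components; everything else is a formal consequence of the definitions together with the already-proven Corollary \ref{cor:pairing-snc}. No further argument involving the graded commutativity of cup products or residue maps is needed, since those have already been absorbed into Corollary \ref{cor:pairing-snc}.
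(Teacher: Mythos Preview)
Your proposal is correct and follows essentially the same route as the paper: the paper's proof is a one-line appeal to the commutativity of (\ref{diag:pairing-Gamma-snc}) together with Corollary \ref{cor:pairing-snc}, relying on the fact (already stated in the setup of Section \ref{subsec:pairing-snc-Gamma}) that the flat pullback $Z_{\dim X}(X\times Y)\to Z_0(Y_{k(X_l)})$ descends to Chow groups. You spell out a few points the paper leaves implicit---in particular the preservation of the snc condition under the base change $k\subset k(X_l)$---but the logical structure of the argument is identical.
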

\begin{proof}
Since (\ref{diag:pairing-Gamma-snc}) commutes, Corollary \ref{cor:pairing-snc} implies that  $\Gamma^\ast \alpha=0$ whenever $\Gamma\in Z_{\dim X}(X\times Y)$ is rationally equivalent to 0.
This proves the corollary.
\end{proof}
 
 \begin{remark}
The above corollary generalizes Corollary \ref{cor:pairing-Gamma} in two ways: $X$ and $Y$ may be reducible and $X$ may be arbitrarily singular.
\end{remark}

\section{Decompositions of the diagonal} \label{sec:diagonal}
The following notion goes back to Bloch \cite{bloch} (using an idea of Colliot-Th\'el\`ene) and Bloch--Srinivas \cite{BS}, and has for instance been studied in \cite{ACTP} and \cite{voisin}.

\begin{definition}
An algebraic scheme $X$ of pure dimension $n$ over a field $k$ admits a decomposition of the diagonal if 
\begin{align} \label{def:dec-of-diag}
[\Delta_X]=[X\times z]+[Z_X]\in \CH_n(X\times_k X) ,
\end{align}
where $\Delta_X\subset X\times_kX$ denotes the diagonal, $z\in  Z_0(X)$ is a 0-cycle on $X$ and $Z_X$ is a cycle on $X\times_k X$ which does not dominate any component of the first factor.
\end{definition}

For instance, $X=\CP^n_k$ admits a decomposition of the diagonal, because $\CH_n(\CP^n \times_k\CP^n)$ is generated by $[\CP^n\times \{x\}]$ for some $k$-rational point $x\in \CP^n$ together with cycles that do not dominate the first factor (namely $h^{n-i}\times h^i$ for $i= 1,\dots ,n$, where $h^i\subset \CP^n$ denotes a linear $i$-dimensional subspace).

To give a reducible example with a decomposition of the diagonal, we will now show that the union $X=\CP^n_k\cup_H \CP^n_k$ of two copies of $\CP^n_k$ glued along a hypersurface $H\subset \CP^n_k$ admits a decomposition of the diagonal as long as $H$ admits a $k$-rational point.
Indeed, if we write $X=X_1\cup X_2$, where $X_1,X_2$ are the irreducible components of $X$, then $X\times_kX$ has the 4 irreducible components $X_i\times_kX_j$ with $i,j\in \{1,2\}$. 
The class of the diagonal $\Delta_X$ may then be written as
$$
[\Delta_X]=[\Delta_{X_1}]+[\Delta_{X_2}]\in \CH_n(X\times X),
$$
where $\Delta_{X_i}\subset X_i\times X_i$ denotes the diagonal.
Because of the proper pushforward map 
$$
 \CH_n(X_i\times X_i)\longrightarrow  \CH_n(X\times X)
$$
for each $i=1,2$, we may use the decomposition of the diagonal of $X_i$ for each $i$ constructed above to get an identity
$$
\Delta_X=[X_1\times x_1]+[X_2\times x_2]+[Z_X] \in \CH_n(X\times X) ,
$$
where $x_i\in X_i$ is a $k$-rational point of $X_i$ and $Z_X$ is a cycle on $X\times_k X$ which does not dominate any component of the first factor.
This decomposition has not yet the form required in (\ref{def:dec-of-diag}).
However, since $H$ contains a $k$-rational point, any two $k$-rational points of $X$ are rationally equivalent (they may be joined by a chain of two lines).
This implies in particular $[X_2\times x_1 ]=[X_2\times x_2 ]$ and so
$$
\Delta_X=[X\times x_1]+[Z_X] \in \CH_n(X\times X).
$$
This gives a decomposition of the diagonal $\Delta_X$ as in (\ref{def:dec-of-diag}), as claimed.

Since in the above definition, $X$ is an arbitrary algebraic $k$-scheme, it may be illustrative to also consider a slightly more exotic example, such as $X=\Spec \C$ over the ground field $k=\R$.
In this case, the choice of a root of $-1$ in $\C$ identifies $X\times_{\R}X$ to the disjoint union of two copies of $\Spec \C$ (the Galois group of $X/\R$ permutes the two copies).
The diagonal $\Delta_X$ is isomorphic to $\Spec \C$ (it corresponds to one of the two components once we fixed a root of $-1$).
Hence, $\Delta_X$ is a 0-cycle of degree $2$ on the proper $\R$-scheme $X\times_\R X$.
On the other hand, any $0$-cycle $z$ on $X$ is just a multiple of $X$ and so the cycle $X\times z$ will have degree divisible by $4$.
This shows that no decomposition as in (\ref{def:dec-of-diag}) exists (where we note that $Z_X$ needs to be empty, as any nonzero cycle on $X\times_{\R} X$ dominates both factors).  

The following lemma generalizes the observation made in this last example, by showing that a proper algebraic scheme $X$ with a decomposition of the diagonal is geometrically connected and of index one,  
 i.e.\ $X$ contains a 0-cycle of degree one.

\begin{lemma}
Let $X$ be a proper algebraic scheme  of pure dimension $n$ over a field $k$.
If $X$ admits a decomposition of the diagonal as in  (\ref{def:dec-of-diag}), then the following holds:
\begin{enumerate}
\item the 0-cycle $z$ on $X$ in (\ref{def:dec-of-diag}) has degree one;\label{item:deg=1}
\item $X$ is geometrically connected. \label{item:geom-connected}
\end{enumerate}
\end{lemma}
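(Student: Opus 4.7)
For (1), I would apply the pushforward under the first projection $\pi_1\colon X\times_k X\to X$ to the decomposition (\ref{def:dec-of-diag}). Since $\pi_1|_{\Delta_X}$ is an isomorphism, $(\pi_1)_\ast[\Delta_X]=[X]$ in $\CH_n(X)$. For a closed point $p\in X$, the cycle $X\times p$ equals the base change $X_{\kappa(p)}$, so flat pushforward gives $(\pi_1)_\ast[X\times p]=[\kappa(p):k]\cdot[X]$; summing yields $(\pi_1)_\ast[X\times z]=\deg(z)\cdot[X]$. Finally $(\pi_1)_\ast[Z_X]=0$ because no component of $Z_X$ dominates a component of the first factor. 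Hence $[X]=\deg(z)\cdot[X]$ in $\CH_n(X)$. Since $X$ has pure dimension $n$, the group $\CH_n(X)$ is freely generated by the $n$-dimensional irreducible components of $X$; as $[X]\neq 0$ lives in this free abelian group, we conclude $\deg(z)=1$.

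For (2), I argue by contradiction. Assume $X$ is not geometrically connected; then there is a finite extension $K/k$ such that $X_K=Y_1\sqcup Y_2$ is a disjoint union of two nonempty clopen subschemes (the geometric connected components of $X$ are defined over a finite extension of $k$). Base-changing (\ref{def:dec-of-diag}) yields
\[
[\Delta_{X_K}]=[X_K\times z_K]+[(Z_X)_K]\quad\text{in } \CH_n(X_K\times_K X_K).
\]
The clopen decomposition $X_K\times_K X_K=\bigsqcup_{i,j\in\{1,2\}}Y_i\times_K Y_j$ induces a direct sum decomposition of $\CH_n$ (each integral subvariety, and each $(n+1)$-dimensional subvariety supporting a rational equivalence, lies in a single piece by irreducibility). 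Projecting the identity above to the summand $\CH_n(Y_1\times_K Y_2)$, the left-hand side contributes $0$ because $\Delta_{X_K}\subset (Y_1\times Y_1)\sqcup(Y_2\times Y_2)$; writing $z_K=z_1+z_2$ with $z_i\in Z_0(Y_i)$, the restriction of $X_K\times z_K$ is $[Y_1\times z_2]$; and the restriction of $(Z_X)_K$ still has no component dominating a component of $Y_1$, by the assumption on $Z_X$. Pushing forward to $\CH_n(Y_1)$ under $\pi_1\colon Y_1\times_K Y_2\to Y_1$ and repeating the calculation from (1) gives $0=\deg_K(z_2)\cdot[Y_1]$, whence $\deg_K(z_2)=0$ because $[Y_1]\neq 0$ in the free group $\CH_n(Y_1)$. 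By the symmetric argument on $Y_2\times_K Y_1$, also $\deg_K(z_1)=0$, and hence $\deg_K(z_K)=0$.

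This contradicts $\deg_K(z_K)=\deg_k(z)=1$: the first equality holds because $\dim_K(F\otimes_k K)=\dim_k F$ for any finite $k$-algebra $F$, and the second is part (1). The only nonroutine checkpoint is the compatibility of restriction to a clopen piece with rational equivalence, which is immediate since each integral subvariety entering such an equivalence lies entirely in a single clopen piece of $X_K\times_K X_K$; once this is granted both assertions follow from the same projection trick.
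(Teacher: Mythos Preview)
Your proof is correct and follows essentially the same approach as the paper: both parts use pushforward along the first projection, and for (\ref{item:geom-connected}) both base-change the decomposition, restrict to an off-diagonal clopen piece $Y_i\times Y_j$ with $i\neq j$, and push forward to conclude $\deg(z_j)=0$, contradicting part (\ref{item:deg=1}). The only cosmetic difference is that the paper first reduces to the statement ``decomposition of the diagonal $\Rightarrow$ connected'' (since decompositions base-change) and then works with an arbitrary decomposition $X=\sqcup_{i=1}^r X_i$, whereas you pass directly to a finite extension and group the components into two nonempty clopen pieces.
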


\begin{proof}
Since $X$ is proper, the projection $\pr_1:X\times_kX\to X$ to the first factor gives rise to a proper pushforward map 
$$
(\pr_1)_\ast:\CH_n(X\times_kX)\longrightarrow \CH_n(X).
$$
If a decomposition as in  (\ref{def:dec-of-diag}) exists, then
$$
(\pr_1)_\ast[\Delta_X]=(\pr_1)_\ast [X\times z]=\deg(z)\cdot [X]\in \CH_n(X).
$$
Since $(\pr_1)_\ast[\Delta_X]=[X]\in \CH_n(X)$ and because this class is non-torsion (in fact $\CH_n(X)$ is a free abelian group on the irreducible components of the reduced scheme $X^{\red}$), the above identity implies $\deg(z)=1$, as we want. 
This proves (\ref{item:deg=1}).

If $X$ admits a decomposition of the diagonal, then for any  field extension $K/k$, the algebraic $K$-scheme $X_K=X\times_kK$ admits a decomposition of the diagonal as well, simply  by taking flat pullbacks of  (\ref{def:dec-of-diag}).
To prove (\ref{item:geom-connected}), it thus suffices to show that $X$ is connected if it admits a decomposition of the diagonal.
For a contradiction, assume that $X=\sqcup_{i=1}^m X_i$ is the disjoint union of finitely many algebraic $k$-schemes $X_1,\dots ,X_r$ with $r\geq 2$. 
We may then write the 0-cycle $z$ from (\ref{def:dec-of-diag}) in the form $z=\sum_{i=1}^rz_i$ such that $z_i$ is supported on $X_i$.
Pulling back the equality  (\ref{def:dec-of-diag}) to $X_i\times_k X_j$  for $i\neq j$ (this is possible because $X_i\times_k X_j$ is an open subscheme of $X\times_kX$ and so the inclusion is flat), we find that
$$
X_i\times z_j\in \CH_n(X_i\times_kX_j)
$$ 
is rationally equivalent to a cycle that does not dominate $X_i$ via the first projection.
Pushing this identity forward to the first factor, we find that $z_j$ has degree zero.
Since $r\geq 2$, this holds for all $j$ and so the 0-cycle $z$ has degree zero, contradicting item (\ref{item:deg=1}).  
This concludes the proof.
\end{proof}

In the case of varieties, we have the following important result.

\begin{lemma} \label{lem:dec-of-diag-CH_0}
A variety $X$ over a field $k$ admits a decomposition of the diagonal if and only if  there is a 0-cycle $z\in Z_0(X)$ on $X$ such that
\begin{align} \label{eq:dec-of-diag-CH_0}
[\delta_X]=[z_{K}]\in \CH_0(X_{K}), 
\end{align}
where $K=k(X)$ and where $\delta_X$ denotes the 0-cycle on $X_K$ that is induced by the diagonal $\Delta_X$.
\end{lemma}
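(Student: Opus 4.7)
The plan is to deduce this lemma from the localization sequence for Chow groups applied to the first projection $\pr_1\colon X\times_k X\to X$, together with the identification
$$
\CH_0(X_K)\cong \varinjlim_{\emptyset\neq U\subset X} \CH_n(U\times_k X),
$$
where $U$ ranges over the nonempty open subsets of $X$ and the transition maps are flat pullbacks along open immersions.

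First I would construct a flat pullback homomorphism
$$
\rho\colon \CH_n(X\times_k X)\longrightarrow \CH_0(X_K)
$$
by realizing $X_K$ as the inverse limit of the $U\times_k X$ along the open immersions $U\times X\hookrightarrow X\times X$. Then I would invoke, for each proper closed $T=X\setminus U\subsetneq X$, the localization sequence
$$
\CH_n(T\times X)\longrightarrow \CH_n(X\times X)\longrightarrow \CH_n(U\times X)\longrightarrow 0,
$$
and pass to the direct limit over $U$ to conclude that $\rho$ is surjective and that its kernel is precisely the subgroup of $\CH_n(X\times X)$ consisting of classes represented by $n$-cycles no component of which dominates the first factor.

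Next, I would compute $\rho$ on the three relevant classes. Unwinding definitions, $\rho([\Delta_X])=[\delta_X]$, since the diagonal restricted to the generic fiber of $\pr_1$ is the tautological $K$-point of $X_K$; $\rho([X\times z])=[z_K]$ for any $z\in Z_0(X)$, since $X\times z=\pr_2^{\ast} z$ restricts to the base change $z_K$ on the generic fiber; and $\rho([Z_X])=0$ whenever $Z_X$ has no component dominating the first factor, since then $Z_X$ is supported on $T\times X$ for some proper closed $T\subsetneq X$ that avoids the generic point of $X$. Granting these computations, the forward implication is immediate by applying $\rho$ to (\ref{def:dec-of-diag}). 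For the converse, assuming $[\delta_X]=[z_K]$ in $\CH_0(X_K)$, we get $\rho([\Delta_X]-[X\times z])=0$; by the description of $\ker\rho$, the class $[\Delta_X]-[X\times z]$ is represented by some cycle $Z_X$ with no component dominating the first factor, which yields the decomposition (\ref{def:dec-of-diag}).

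The main obstacle is the rigorous verification that $\ker\rho$ coincides with the subgroup of cycles not dominating the first factor. This rests on the compatibility of Chow groups with filtered direct limits of flat open immersions and on the exactness of the localization sequence; some care is needed because the limit $X_K$ is not of finite type over $k$. Once this formal input is in place, the remaining identifications $\rho([\Delta_X])=[\delta_X]$ and $\rho([X\times z])=[z_K]$ are direct consequences of the definition of flat pullback on cycles, and the rest of the argument is essentially formal.
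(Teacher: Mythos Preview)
Your proposal is correct and follows essentially the same approach as the paper: both rely on the isomorphism $\CH_0(X_K)\cong \varinjlim_{\emptyset\neq U\subset X}\CH_n(U\times_k X)$ together with the localization sequence from \cite[Proposition 1.8]{fulton} to characterize the kernel of the restriction map $\CH_n(X\times_k X)\to \CH_0(X_K)$. Your write-up is more explicit than the paper's (which dispatches the lemma in two sentences), but the underlying argument is the same.
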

\begin{proof}
Let $n:=\dim X$.
There is a natural isomorphism 
$$
 \lim_{\substack{\longrightarrow \\ \emptyset \neq U\subset X}} \CH_n(U\times_kX)\stackrel{\cong}\longrightarrow \CH_0(X_{K}).
$$
Using this, a decomposition of the diagonal (\ref{def:dec-of-diag}) implies directly an identity as in (\ref{eq:dec-of-diag-CH_0}) and the converse follows from the localization sequence \cite[Proposition 1.8]{fulton}.
\end{proof}

The following theorem which in the smooth proper case is due to Merkurjev, relates the notion of decompositions of the diagonal with unramified cohomology, cf.\ \cite[Theorem 2.11]{merkurjev}.

\begin{theorem}  \label{thm:dec->H_nr=0}
Let $k$ be a field and let $m$ be a positive integer that is invertible in $k$.
Let $X$ be a proper snc scheme (e.g.\ a smooth proper variety) over $k$.
If $X$ admits a decomposition of the diagonal, then the natural morphism
$$
\iota:H^i(k,\mu_m^{\otimes j})\longrightarrow H^i_{nr}(X/k,\mu_m^{\otimes j})
$$
is surjective for all $i$.
In particular, $ H^i_{nr}(X/k,\mu_m^{\otimes j})=0$ for all $i>0$ if $k$ is algebraically closed.
\end{theorem}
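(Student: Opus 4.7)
The plan is to apply the correspondence pairing of Corollary \ref{cor:pairing-Gamma-snc} (with $Y=X$) against both sides of the decomposition of the diagonal
$$
[\Delta_X] = [X\times z] + [Z_X] \in \CH_n(X\times_k X),
$$
and a given class $\alpha \in H^i_{nr}(X/k,\mu_m^{\otimes j})$. I claim that
$$
[\Delta_X]^\ast \alpha = \alpha, \qquad [Z_X]^\ast \alpha = 0, \qquad [X\times z]^\ast \alpha = \iota(\beta),
$$
where $\beta := \langle z,\alpha\rangle \in H^i(k,\mu_m^{\otimes j})$ is obtained from the pairing (\ref{def:pairing-snc}). The decomposition then forces $\alpha = \iota(\beta)$, giving surjectivity of $\iota$. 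The concluding ``in particular'' statement is immediate from (\ref{eq:coho-dimension}) applied to the zero-dimensional variety $\Spec k$.

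Each of the three identities is verified via diagram (\ref{diag:pairing-Gamma-snc}). Fixing a component $X_l$ of $X$, computing the $l$-th factor of a correspondence action amounts to flat-pulling back the cycle to $X_{k(X_l)}$ and applying the snc pairing (\ref{def:pairing-snc}) against $\alpha_{k(X_l)}$. Since $\Delta_X = \sum_{l'}\Delta_{X_{l'}}$ and only the summand with $l'=l$ has first-projection image meeting the generic point of $X_l$, the flat pullback of $[\Delta_X]$ to $X_{k(X_l)}$ is the diagonal closed point $\delta_l$ on $(X_l)_{k(X_l)}$ whose residue field is $k(X_l)$ with identity structure map; Lemma \ref{lem:pairing-snc} then gives $([\Delta_X]^\ast \alpha)_l = \alpha_l$, so $[\Delta_X]^\ast \alpha$ corresponds to $\alpha$ via the inclusion $H^i_{nr}(X/k,\mu_m^{\otimes j})\hookrightarrow \bigoplus_l H^i(k(X_l),\mu_m^{\otimes j})$. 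The vanishing $[Z_X]^\ast\alpha=0$ is immediate from the definition of the pairing since no integral component of $Z_X$ dominates any component of $X$ via the first projection. Finally, for $[X\times z]^\ast \alpha$, the flat pullback of $X\times z$ to $X_{k(X_l)}$ is the base-changed $0$-cycle $z_{k(X_l)}$, and combining Lemma \ref{lem:pairing-snc} with the standard base-change compatibility of pushforwards in \'etale cohomology yields
$$
\langle z_{k(X_l)}, \alpha_{k(X_l)} \rangle \;=\; \pi_l^\ast \langle z, \alpha \rangle,
$$
where $\pi_l\colon \Spec k(X_l)\to \Spec k$; this is exactly the $l$-th component of $\iota(\beta)$.

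The main obstacle is the verification of the base-change identity in the last step. For a closed point $x_i$ in the support of $z$, the base change $\Spec(\kappa(x_i)\otimes_k k(X_l))$ is a zero-dimensional $k(X_l)$-scheme that may decompose into several closed points with non-trivial length multiplicities (for instance in inseparable situations), and one must check that the resulting sum of pushforward contributions to the pairing, taken with exactly the multiplicities dictated by flat pullback of cycles, matches $\pi_l^\ast$ applied to the single pushforward $(f_{x_i})_\ast(\alpha|_{x_i})\in H^i(k,\mu_m^{\otimes j})$. This is a standard proper base-change compatibility for \'etale cohomology of finite field extensions, but it has to be set up carefully; once it is in hand, the snc generalization parallels Merkurjev's original argument for smooth proper varieties almost verbatim.
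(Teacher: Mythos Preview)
Your argument is correct and follows essentially the same route as the paper: apply the pairing of Corollary \ref{cor:pairing-Gamma-snc} to both sides of the decomposition, observe $[\Delta_X]^\ast\alpha=\alpha$, $[Z_X]^\ast\alpha=0$, and identify $[X\times z]^\ast\alpha$ with $\iota(\langle z,\alpha\rangle)$. The paper states the last identity without further comment, so the base-change compatibility you single out as the ``main obstacle'' is precisely the point the paper leaves implicit; your caution there is appropriate but does not constitute a different method.
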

\begin{proof}
Assume that $X$ admits a decomposition of the diagonal as in (\ref{def:dec-of-diag}) and let $X_l$ with $l\in I$ denote the components of $X$.
By Corollary \ref{cor:pairing-Gamma-snc}, the pairing (\ref{def:pairing-Gamma-snc}) descends to a well-defined pairing
\begin{align} \label{def:pairing-Chow-snc-2}
\CH_{\dim X}(X\times X)\times H^i_{nr}(X/k,\mu_m^{\otimes j})\longrightarrow \bigoplus_{l\in I} H^i (k(X_l) ,\mu_m^{\otimes j}).
\end{align}
It follows form the definition of this pairing  in (\ref{def:pairing-Gamma-snc}) that
$$
[\Delta_X]^\ast \alpha=\alpha
$$
for all 
$$
\alpha\in H^i_{nr}(X/k,\mu_m^{\otimes j})\subset \bigoplus_{l\in I} H^i (k(X_l) ,\mu_m^{\otimes j}) .
$$
On the other hand
$$
[Z_X]^\ast \alpha=0 ,
$$
whenever $Z_X\in \CH_{\dim X}(X\times X)$ does not dominate any component of the first factor.
Hence, (\ref{def:dec-of-diag})  implies
$$
\alpha=[\Delta_X]^\ast \alpha=[X\times z]^\ast \alpha+[Z_X]^\ast \alpha=[X\times z]^\ast \alpha .
$$
If $z=\sum_s a_s[x_s]$ for some integers $a_s$ and closed points $x_s\in X$ with structure morphisms $f_{x_s}:\Spec \kappa(x_s)\to \Spec k$, then
$$
[X\times z]^\ast \alpha=\iota(\sum_s a_s(f_{x_s})_\ast \alpha|_{x_s}) ,
$$
where 
$$
\iota:H^i(k,\mu_m^{\otimes j})\longrightarrow H^i_{nr}(X/k,\mu_m^{\otimes j})
$$
is the natural morphism.
This proves the theorem.
\end{proof}

\subsection{Connection to rationality and stable birational types}

For the following result, see \cite[Lemme 1.5]{CTP} in the smooth case and \cite[Lemma 2.4]{Sch-JAMS} in general.

\begin{lemma} \label{lem:ratl->decofdiag}
A variety $X$ over a field $k$ that is stably rational  (or more generally retract rational) admits a decomposition of the diagonal.
\end{lemma}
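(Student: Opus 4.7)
The plan is first to reduce to the case where $X$ is retract rational, i.e.\ where there exist rational maps $f \colon X \dashrightarrow \CP^N_k$ and $g \colon \CP^N_k \dashrightarrow X$ whose composition $g \circ f$ agrees with $\id_X$ as a rational map. This reduction is immediate when $X$ is stably rational: given a birational map $\phi \colon X \times_k \CP^m_k \dashrightarrow \CP^{n+m}_k$ with $n = \dim X$, composing $\phi$ with the inclusion $X \hookrightarrow X \times_k \CP^m_k$ over a chosen $k$-rational point of $\CP^m_k$, and $\phi^{-1}$ with the first projection, produces rational maps $f$ and $g$ with $N := n + m$ satisfying $g \circ f = \id_X$ as rational maps.

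By Lemma \ref{lem:dec-of-diag-CH_0} it then suffices to exhibit a 0-cycle $z \in Z_0(X)$ with $[\delta_X] = [z_K]$ in $\CH_0(X_K)$, where $K := k(X)$. After base-changing $f$ and $g$ to $K$, the diagonal point $\delta_X \in X_K$ is a $K$-rational point whose image in $X$ under the first projection is the generic point, so $\delta_X$ lies in the domain of $f_K$; the identity $g \circ f = \id_X$ forces $g_K$ to be defined at $p := f_K(\delta_X)$ with $g_K(p) = \delta_X$. Choosing a $k$-rational point $q \in \CP^N(k)$ in the domain of $g$ and setting $z := g(q) \in Z_0(X)$, one has $[p] = [q_K]$ in $\CH_0(\CP^N_K)$ because any two $K$-rational points of projective space are rationally equivalent via a line. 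The remaining task is to push this equivalence forward along $g_K$ to conclude $[\delta_X] = [z_K]$.

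The main technical obstacle is that $g_K$ is only a rational map, so the push-forward of a 1-cycle realizing the equivalence on $\CP^N_K$ is not automatically defined. To overcome this, I would resolve the indeterminacy of $g$ by a proper birational morphism $\pi \colon Y \to \CP^N_k$ with $Y$ a smooth projective $k$-variety, chosen so that $g \circ \pi$ extends to a morphism $\tilde g \colon Y \to \overline X$ into some proper model $\overline X$ of $X$. Both $p$ and $q_K$ lift canonically to $K$-rational points of $Y_K$, and since $Y$ is again $k$-rational these two lifts are rationally equivalent in $\CH_0(Y_K)$ through a 1-cycle on $Y_K$ that pushes forward cleanly along the honest morphism $\tilde g_K$, producing a rational equivalence between $\delta_X$ and $z_K$ on $\overline X_K$.

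This recovers the argument of \cite[Lemme~1.5]{CTP} in the smooth proper case; the additional subtlety for arbitrary, possibly singular and non-proper $X$ carried out in \cite[Lemma~2.4]{Sch-JAMS} lies in ensuring that the 0-cycle produced on the compactification can be realized by a 0-cycle supported on $X$ itself and that the equivalence on $\overline X_K$ restricts to an equivalence on $X_K$, which is handled by a careful use of the localization exact sequence for Chow groups.
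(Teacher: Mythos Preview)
Your argument is essentially correct and follows the same overall strategy as the paper (reduce to $\CH_0(\CP^N_K)\cong \Z$ and transport the resulting equality back to $X_K$), but the technical mechanism you use for the transport is different, and this has a cost.

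You resolve the rational map $g$ by a proper birational morphism $\pi:Y\to \CP^N_k$ with $Y$ smooth projective, lift $p$ and $q_K$ to $Y_K$, and then invoke the rationality of $Y$ to conclude that the lifts are rationally equivalent in $\CH_0(Y_K)$. This step implicitly uses the birational invariance of $\CH_0$ for smooth projective varieties, and your construction of a \emph{smooth} $Y$ requires resolution of singularities. So as written your proof is only valid in characteristic $0$ (or more generally when such a resolution of the graph of $g$ is available). The paper's proof avoids this: it first replaces $X$ by the graph $\Gamma_f$ so that $f$ becomes a proper morphism, and then defines $g^\ast:\CH_0(\CP^N_K)\to\CH_0(X_K)$ directly via Fulton's refined intersection product \cite[Definition 8.1.2]{fulton} (pull back to $(\Gamma_g)_K$, then push forward to $X_K$). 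This only needs smoothness of $\CP^N$, not of any auxiliary model, and therefore works over an arbitrary field. The identity $g^\ast\circ f_\ast=\id$ then gives $[\delta_X]=g^\ast f_\ast[\delta_X]=g^\ast[z_K]$ in one line.

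In short: both proofs hinge on $\CH_0(\CP^N_K)\cong\Z$, but the paper transports this via a refined Gysin pullback along the correspondence $\Gamma_g$, whereas you transport it via an honest pushforward after resolving $g$. The paper's route is characteristic-free and slightly more economical; yours is fine in characteristic $0$ but you should flag the dependence on resolution, or else replace the smoothness requirement on $Y$ by the refined Gysin argument.
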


\begin{proof} 
Recall that $X$ is retract rational if there are rational maps $f:X\dashrightarrow \CP^N_k$ and $g:\CP^N_k\dashrightarrow X$ with $g\circ f=\id$ (here we ask implicitly that the composition $g\circ f$ is defined).
This condition is for instance satisfied if $X$ is stably rational. 
We denote by $\Gamma_f\subset X\times \CP^N_k$ and $\Gamma_g\subset \CP^N_k\times X$ the closure of the graph of $f$ and $g$, respectively.
Replacing $X$ by a projective model, we may assume that $X$ is proper over $k$.
Replacing $X$ by $\Gamma_f$, we may also assume that $f$ is a morphism, which is automatically proper since $X$ is proper.
For any field extension $K$ of $k$, we then get morphisms
$$
f_\ast:\CH_0(X_K)\longrightarrow \CH_0(\CP^N_K)\ \ \text{and}\ \ g^\ast:\CH_0(\CP^N_K)\longrightarrow \CH_0(X_K),
$$
where $g^\ast$ is defined by pulling back cycles from $\CP^N_K$ to $(\Gamma_f)_K$ (using \cite[Definition 8.1.2]{fulton}, which works because $\CP^N_K$ is smooth) and pushing these cycles forward to $X_K$ via the natural proper morphism $(\Gamma_f)_K\to X_K$. 

Let now $K=k(X)$ be the function field of $X$.
Then 
$$
[\delta_X]=g^\ast \circ f_\ast [\delta_X] ,
$$
because $g\circ f=\id$ implies $g^\ast\circ f_\ast=\id$.
On the other hand, $\CH_0(\CP^N_K)\cong \Z$ is generated by $[z_K]$ for any $k$-point $z$ of $\CP^N_k$.
Since $f_\ast [\delta_X]$ has degree one, it follows that $f_\ast[ \delta_X]=[z_K]$ and so
$$
[\delta_X]=g^\ast \circ f_\ast [ \delta_X]=g_\ast [z_K]=[(g_\ast z)_K] ,
$$ 
which shows that $X$ admits a decomposition of the diagonal by Lemma \ref{lem:dec-of-diag-CH_0}. 
\end{proof}

The above lemma implies that a variety that does not admit a decomposition of the diagonal cannot be stably rational.
Motivated by work of Shinder \cite{Shinder}, the following generalization is proven in \cite[Theorem 1.1]{Sch-EPIGA}.

\begin{theorem} Let $k$ be an uncountable algebraically closed field and let $X$ be a smooth projective $k$-variety that does not admit a decomposition of the diagonal.
Then for any positive integers $n$ and $d$, $X$ is not stably birational to a very general hypersurface of degree $d$ and dimension $n$ over $k$.
\end{theorem}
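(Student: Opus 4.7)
The plan is to argue by contradiction via a degeneration of $Y$ into hyperplanes combined with the Nicaise--Shinder specialization formula in the Grothendieck ring of stable birational types (an snc refinement of which is implicit in Theorem~\ref{thm:specialization}). Suppose that $X$ is stably birational to a very general hypersurface $Y\subset \CP^{n+1}_k$ of degree $d$ and dimension $n$. By a standard uncountability argument, writing ``stably birational to $X$'' as a countable union (indexed by the auxiliary projective-space factor $\CP^N$) of constructible subsets of the parameter space $B$ of smooth degree-$d$ hypersurfaces, one of these constructible subsets must be Zariski dense in $B$; in particular it meets the generic point of any well-chosen pencil through a very general point. I would then pick a pencil $\A^1_k\hookrightarrow B$ from such a very general point to a point $0\in B$ parametrizing a general union $Y_0=H_1\cup\dots\cup H_d$ of $d$ hyperplanes in $\CP^{n+1}_k$, localize and complete at $s=0$, and obtain a proper flat family $\mathcal Y\to \Spec R$ over the DVR $R=k[[s]]$ whose generic fiber $\mathcal Y_K$ (with $K=k((s))$) is stably birational to $X_K:=X\otimes_k K$ over $K$, and whose special fiber is $Y_0$.

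Next I would execute a semistable reduction (iteratively blowing up the non-transverse intersection strata of the $H_i$ in the total space, together with a ramified base change) to obtain a strictly semistable family $\widetilde{\mathcal Y}\to \Spec R'$ whose special fiber $\widetilde Y_0=\bigcup_{i\in I}\widetilde Y_{0,i}$ is an snc $k$-scheme. Each closed stratum $\widetilde Y_{0,J}=\bigcap_{i\in J}\widetilde Y_{0,i}$ is obtained from a linear subspace $\bigcap_{i\in J}H_i\cong \CP^{n+1-|J|}$ by a sequence of blow-ups along smooth rational centers, so that every $\widetilde Y_{0,J}$ is rational. The Nicaise--Shinder specialization formula in the Grothendieck ring $\mathbb Z[SB_k]$ of stable birational types of smooth proper $k$-varieties then gives
\[
\mathrm{sb}(\mathcal Y_K)=\sum_{\emptyset\neq J\subset I}(-1)^{|J|-1}\bigl[\widetilde Y_{0,J}\bigr]=N\cdot [\mathrm{pt}]
\]
for some $N\in \mathbb Z$, because every stratum class equals $[\mathrm{pt}]$.

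Finally, since $\mathrm{sb}$ is an invariant of stable birational equivalence classes over $K$, and since the trivial model $X\times_k R\to \Spec R$ yields $\mathrm{sb}(X_K)=[X]$, the stable birational equivalence of $\mathcal Y_K$ and $X_K$ over $K$ translates into $[X]=N\cdot [\mathrm{pt}]$ in $\mathbb Z[SB_k]$. Because $\mathbb Z[SB_k]$ is freely generated as an abelian group by $SB_k$, this forces $[X]=[\mathrm{pt}]$ and $N=1$, so that $X$ is stably rational; Lemma~\ref{lem:ratl->decofdiag} then yields a decomposition of the diagonal on $X$, contradicting the hypothesis. The main obstacle I expect is the semistable reduction step together with the verification that every new stratum remains rational: while the combinatorics of resolving the hyperplane-arrangement degeneration is in principle explicit (toric in nature), it must be carried out uniformly in $d$ and $n$, and in positive characteristic one must either establish semistable reduction for these hypersurface families by hand or bypass it entirely using the snc refinement of Merkurjev's pairing developed in Section~\ref{sec:snc}, which underlies Theorem~\ref{thm:specialization}.
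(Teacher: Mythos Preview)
Your route via the Nicaise--Shinder specialization in the free abelian group on stable birational classes is genuinely different from the paper's. The paper deduces the result from Theorem~\ref{thm:degeneration:mixed:char:1}: take $\pi$ to be a strictly semistable degeneration of hypersurfaces whose special fibre has universally trivial $\CH_0$, take $\pi'=X\times_kR$, and conclude that the geometric generic fibres are not stably birational; then descend to very general closed fibres by the countability argument you sketch. The point of Theorem~\ref{thm:degeneration:mixed:char:1} is that it is proved by specializing decompositions of the diagonal (in the spirit of Corollary~\ref{cor:specialization-of-diagonal}) and requires no weak factorization, so it holds in arbitrary characteristic.

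In characteristic~$0$ your argument is a legitimate alternative, modulo your own caveat that the rationality of every stratum of the semistable reduction must actually be checked: the total space of the pencil $\{sG+L_1\cdots L_d=0\}$ is singular along $\bigcup_{i<j}(H_i\cap H_j)\cap\{G=0\}$, so the resolution involves more than the combinatorics of the hyperplane arrangement, and the assertion that each new stratum is a blow-up of a linear space in rational centres needs an argument. The genuine gap is in positive characteristic: the specialization map of Theorem~\ref{thm:NS-KT} is only available when $\operatorname{char}k=0$, since its construction rests on weak factorization. Your proposed fallback to Section~\ref{sec:snc} and Theorem~\ref{thm:specialization} does not close this gap: that machinery obstructs a decomposition of the diagonal on the generic fibre from nontrivial unramified cohomology of the snc special fibre, but your special fibre has rational strata and hence trivial $H^i_{nr}$, so Theorem~\ref{thm:specialization} yields nothing here. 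What is actually needed in positive characteristic is precisely Theorem~\ref{thm:degeneration:mixed:char:1}.
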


\begin{remark}
Let  $B:=\CP H^0(\CP^{n+1}_k,\mathcal O(d))$ and let $\mathcal Y\to B$ be the universal degree $d$ hypersurface in $\CP^{n+1}_k$.
The conclusion of the above theorem means that there is a countable union (that depends on $X$)  of proper closed subsets of  $B $ such that any hypersurface $Y$, that corresponds to a point outside this countable union, is not stably birational to $X$.
\end{remark}

The above result is a consequence of Theorem \ref{thm:degeneration:mixed:char:1} below (see \cite[Theorem 4.1]{Sch-EPIGA}).
To state the result,  we need to recall the following terminology.

Firstly, for a discrete valuation ring $R$, a proper flat $R$-scheme $\mathcal X$ is called strictly semi-stable, if the special fibre $X_0$ is a geometrically reduced simple normal crossing divisor on $\mathcal X$.
That is, the components of $X_0$ are smooth Cartier divisors on $\mathcal X$ and the scheme-theoretic intersection of $r$ different components of $X_0$ is either empty or smooth and equi-dimensional of codimension $r$ in $\mathcal X$.

Secondly, a proper variety $X$ over a field $k$ has universally trivial Chow group of 0-cycles if for any field extension $K/k$, the degree map $\deg:\CH_0(X_K)\to \Z$ is an isomorphism.
If $X$ is smooth and proper over $k$, then this is equivalent to (\ref{eq:dec-of-diag-CH_0}) and hence equivalent to the fact that $X$ admits a decomposition of the diagonal.

\begin{theorem} \label{thm:degeneration:mixed:char:1}
Let $R$ be a discrete valuation ring with algebraically closed residue field $k$.
Let $\pi:\mathcal X\to \Spec R$ and $\pi':\mathcal X'\to \Spec R$ be flat projective morphisms with geometrically connected fibres such that $\pi$ is strictly semi-stable and $\pi'$ is smooth.
Assume 
\begin{enumerate}
\item the special fibre $X_0$ of $\pi$ has universally trivial Chow group of 0-cycles;
\item the special fibre $X'_0$ of $\pi'$ does not admit a decomposition of the diagonal.
\end{enumerate}
Then the geometric generic fibres of $\pi$ and $\pi'$ are not stably birational to each other.
\end{theorem}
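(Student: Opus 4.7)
The plan is to argue by contradiction: suppose the geometric generic fibres $\mathcal{X}_{\bar\eta}$ and $\mathcal{X}'_{\bar\eta}$ are stably birationally equivalent, and derive that $X'_0$ admits a decomposition of the diagonal, contradicting hypothesis (2).

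The first step is to spread the SB-equivalence out over $R$. After possibly enlarging $R$ to a larger discrete valuation ring with the same algebraically closed residue field $k$ over which the SB-equivalence is defined, a stable birational equivalence between the geometric generic fibres is realized as an $R$-birational map $\mathcal{X}\times_R\mathbb{P}^r_R\dashrightarrow \mathcal{X}'\times_R\mathbb{P}^{r'}_R$. Taking the closure of its graph and pushing forward along the projection that forgets the $\mathbb{P}^r\times \mathbb{P}^{r'}$-factors yields a correspondence $\Gamma\in \CH_{d+1}(\mathcal{X}\times_R \mathcal{X}')$, with $d=\dim\mathcal{X}_{\bar\eta}$; the inverse birational map similarly produces $\Gamma'\in\CH_{d+1}(\mathcal{X}'\times_R \mathcal{X})$. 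On the geometric generic fibres,
$$
\Gamma_{\bar\eta}\circ \Gamma'_{\bar\eta}\equiv [\Delta_{\mathcal{X}'_{\bar\eta}}] \quad \text{and}\quad \Gamma'_{\bar\eta}\circ \Gamma_{\bar\eta}\equiv [\Delta_{\mathcal{X}_{\bar\eta}}]
$$
modulo cycles not dominating the first factor. Flat pullback along the inclusion of the closed fibres restricts $\Gamma,\Gamma'$ to cycles $\Gamma_0\in\CH_d(X_0\times_k X'_0)$ and $\Gamma'_0\in\CH_d(X'_0\times_k X_0)$, and the above identities specialize to analogous identities on $X_0\times_k X_0$ and $X'_0\times_k X'_0$.

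The heart of the argument is to now transport the decomposition of the diagonal from $X_0$ to $X'_0$ through these correspondences. Hypothesis (1), together with the snc analogue of Lemma \ref{lem:dec-of-diag-CH_0}, gives a decomposition $[\Delta_{X_0}]=[X_0\times z_0]+[Z_0]$ in $\CH_d(X_0\times_k X_0)$ with $z_0$ a 0-cycle of degree one and $Z_0$ not dominating the first factor. Substituting this into $\Gamma_0\circ \Gamma'_0\equiv [\Delta_{X'_0}]$ (formally $\Gamma_0\circ[\Delta_{X_0}]\circ \Gamma'_0$) yields
$$
[\Delta_{X'_0}]=[X'_0\times z'_0]+[Z'_0]\in\CH_d(X'_0\times_k X'_0),
$$
where $z'_0=(\Gamma_0)_\ast(\Gamma'_0)^\ast z_0$ is a 0-cycle on $X'_0$ and $Z'_0$ does not dominate the first factor. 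This is a decomposition of the diagonal for the smooth variety $X'_0$, contradicting hypothesis (2).

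The main obstacle is that $X_0$ is only snc, so the correspondence calculus on $X_0$ and its compatibility with rational equivalence is nonstandard: one must justify both the specialization $\Gamma_{\bar\eta}\circ \Gamma'_{\bar\eta}\leadsto \Gamma_0\circ \Gamma'_0$ and the composition $\Gamma_0\circ[\Delta_{X_0}]\circ \Gamma'_0$ as identities in $\CH_d(X'_0\times_k X'_0)$. This is precisely where the generalization of Merkurjev's pairing to snc schemes developed in Section \ref{sec:snc}---especially Proposition \ref{prop:pairing-snc}, Corollary \ref{cor:pairing-snc} and Corollary \ref{cor:pairing-Gamma-snc}---becomes essential, ensuring that correspondences between smooth and snc schemes descend to well-defined Chow-level actions which preserve the ``not dominating the first factor'' property under rational equivalence, so that the extracted decomposition of $[\Delta_{X'_0}]$ genuinely has the form required by \eqref{def:dec-of-diag}.
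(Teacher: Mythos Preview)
The paper does not prove Theorem \ref{thm:degeneration:mixed:char:1}; it is stated with a reference to \cite[Theorem 4.1]{Sch-EPIGA}, so there is no in-paper argument to compare against directly.

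Your attempt nonetheless has a genuine gap. In the final paragraph you invoke Proposition \ref{prop:pairing-snc}, Corollary \ref{cor:pairing-snc}, and Corollary \ref{cor:pairing-Gamma-snc} to justify the composition $\Gamma_0\circ[\Delta_{X_0}]\circ\Gamma'_0$ and its compatibility with rational equivalence. But those results concern the pairing of cycles and correspondences with \emph{unramified cohomology classes}: their output lies in groups of the form $H^i(K,\mu_m^{\otimes j})$ or $\bigoplus_l H^i(k(X_l),\mu_m^{\otimes j})$, not in Chow groups. They provide no composition map
\[
\CH_d(X_0\times_k X'_0)\times\CH_d(X'_0\times_k X_0)\longrightarrow\CH_d(X'_0\times_k X'_0),
\]
which is what your argument requires. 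The standard definition of correspondence composition via refined intersection on $X'_0\times_k X_0\times_k X'_0$ needs regularity of the middle factor $X_0$, and that is precisely what fails for the snc scheme $X_0$. So the step ``substitute $[\Delta_{X_0}]=[X_0\times z_0]+[Z_0]$ into $\Gamma_0\circ[\Delta_{X_0}]\circ\Gamma'_0$'' is not justified by anything in Section \ref{sec:snc}, and you have supplied no alternative justification.

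A secondary issue: you appeal to an ``snc analogue of Lemma \ref{lem:dec-of-diag-CH_0}'' to pass from universal $\CH_0$-triviality of $X_0$ to a decomposition of its diagonal. The paper contains no such statement; Lemma \ref{lem:dec-of-diag-CH_0} is proved only for varieties, and its proof uses the single function field $k(X)$, which has no meaning for a reducible scheme. A componentwise argument (for each component $X_i$, apply universal triviality with $L=k(X_i)$ to the $0$-cycle on $(X_0)_{k(X_i)}$ induced by $\Delta_{X_i}$, then use the localization sequence) is plausible, but you have not written it down, and even granting it you are still left with the unresolved correspondence-composition problem above.
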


\subsection{Torsion orders} \label{subsec:Tor(X)}

Let $X$ be a rationally chain connected proper variety over a field $k$ (e.g.\ a smooth Fano variety, see e.g.\ \cite[Theorem V.2.13]{kollar2}).
This means that any two points of the base change $X_{\overline L}$ of $X$ to any algebraically closed field $\overline L\supset k$ can be joined by a chain of rational curves (if $k$ is uncountable then it suffices to ask this for $\overline L=\overline k$).  
In particular, the degree  map 
$$
\deg:\CH_0(X_{\overline k})\longrightarrow \Z
$$
is an isomorphism and this property remains true if we replace $\overline k$ by a larger algebraically closed field.
We thus find that for any 0-cycle $z\in \CH_0(X)$ of degree one, the 0-cycle
$$
\delta_X-z_{k(X)}\in \CH_0(X_{k(X)})
$$
lies in the kernel of the natural map
$$
\CH_0(X_{k(X)})\longrightarrow \CH_0(X_{\overline {k(X)}}).
$$
But then there must be a finite extension $L$ of $k(X)$, so that the above 0-cycle vanishes already in $ \CH_0(X_{L})$.
Since the natural composition
$$
\CH_0(X_{k(X)})\longrightarrow  \CH_0(X_{L})\longrightarrow \CH_0(X_{k(X)})
$$
is given by multiplication by $\deg (L/k)$, we find that 
$$
\delta_X-z_{k(X)}\in \CH_0(X_{k(X)})
$$
is a torsion class.
This motivates the following definition, that has for instance been studied in  \cite{CL}, \cite{kahn}, and \cite{Sch-torsion}.

\begin{definition} \label{def:Tor(X)}
Let $X$ be a proper variety over a field $k$ and let $\delta_X$ be the 0-cycle on $X_{k(X)}$ that is induced by the diagonal $\Delta_X$.
Then the torsion order $\Tor(X)\in \N\cup\{\infty\}$ of $X$ is  the smallest positive integer such that the 0-cycle $\Tor(X)\cdot \delta_X$ may be written as
$$
\Tor(X)\cdot \delta_X=z_{k(X)}\in \CH_0(X_{k(X)}) ,
$$
for some zero-cycle $z$ on $X$, and it is $\infty$ if no such integer exists.
\end{definition}

\begin{remark}
By Lemma \ref{lem:dec-of-diag-CH_0},  $\Tor(X)=1$ if and only if $X$ admits a decomposition of the diagonal and the same argument shows that for any positive integer $e$, $e\cdot \Delta_X$ admits a decomposition as in (\ref{def:dec-of-diag}) if and only if $\Tor(X)<\infty$ divides $e$. 
\end{remark}

If $k$  is algebraically closed and $X$ is smooth and proper, then the torsion order of $X$ bounds the order of any unramified cohomology class $\alpha$ on $X$ of cohomological degree at least $1$, as for $\deg \alpha\geq 1$ and $\Tor(X)<\infty$, 
$$
0=\langle \Tor(X)\cdot \delta_X-z_{k(X)},\alpha \rangle=\langle \Tor(X)\cdot \delta_X ,\alpha \rangle=\Tor(X)\cdot \alpha ,
$$
because $\langle z_{k(X)},\alpha \rangle=0$ as $\alpha$ restricts to zero on any closed point of $X$ (because $k$ is algebraically closed).

\begin{lemma} \label{lem:Tor(X)}
Let $f:X\to Y$ be a proper dominant morphism between proper $k$-varieties.
If $\Tor(X)<\infty$, then
$$
\Tor(Y)\mid \deg(f)\cdot \Tor(X) .
$$
\end{lemma}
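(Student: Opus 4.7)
The plan is to push forward the defining identity for $\Tor(X)$ along the base change $f_L \colon X_L \to Y_L$ of $f$ over $L := k(X)$, and then descend to $\CH_0(Y_K)$ with $K := k(Y)$ via proper pushforward along the natural map $\pi \colon Y_L \to Y_K$. Since $f$ is proper and dominant, and since the conclusion is trivial unless $\deg(f)<\infty$, we may assume that $f$ is generically finite, so that $K \subset L$ is a finite field extension of degree $\deg(f)$ and $\pi$ is finite flat of the same degree.

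From Definition \ref{def:Tor(X)} applied to $X$, choose $z \in Z_0(X)$ with $\Tor(X) \cdot \delta_X = z_L$ in $\CH_0(X_L)$. Applying proper pushforward along $f_L$ and using the flat base change identity $(f_L)_*(z_L) = (f_*z)_L$, we get
$$\Tor(X) \cdot (f_L)_* \delta_X = (f_*z)_L \quad \text{in } \CH_0(Y_L).$$
The key identification is that $(f_L)_* \delta_X = \pi^* \delta_Y$: both sides are represented by the closed point of $Y_L$ whose structure morphism $\Spec L \to Y_L = Y \times_k \Spec L$ is $(\eta_Y, \id)$. On the left this uses that $f$ dominant sends $\eta_X$ to $\eta_Y$ and that the residue field $\kappa(\delta_X) = L$ maps isomorphically to the residue field of its image; on the right this is because $\pi^{-1}(\delta_Y) = \Spec K \times_K \Spec L = \Spec L$.

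Applying $\pi_* \colon \CH_0(Y_L) \to \CH_0(Y_K)$, the projection formula for $\pi$ finite flat of degree $\deg(f)$ yields $\pi_* \pi^* = \deg(f) \cdot \id$, so
$$\pi_* (f_L)_* \delta_X = \pi_* \pi^* \delta_Y = \deg(f) \cdot \delta_Y.$$
Similarly, factoring $Y_L \to Y$ through $\pi$, one has $(f_*z)_L = \pi^*\bigl((f_*z)_K\bigr)$, and applying $\pi_*$ gives $\pi_* (f_*z)_L = \deg(f) \cdot (f_*z)_K$. Combining, we obtain
$$\Tor(X) \cdot \deg(f) \cdot \delta_Y = \bigl(\deg(f) \cdot f_*z\bigr)_K \quad \text{in } \CH_0(Y_K).$$
Setting $w := \deg(f) \cdot f_*z \in Z_0(Y)$, Definition \ref{def:Tor(X)} then yields $\Tor(Y) \mid \Tor(X) \cdot \deg(f)$.

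The main point requiring care is the identification $(f_L)_* \delta_X = \pi^* \delta_Y$ with the correct multiplicity, together with the base change identity for $0$-cycles under the Cartesian square relating $X_L, Y_L, X, Y$; once these are established, the conclusion follows from a direct application of the projection formula.
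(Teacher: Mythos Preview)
Your proof is correct and takes essentially the same approach as the paper. The paper works directly with the single proper morphism $f':X_{k(X)}\to Y_{k(Y)}$ induced by $f\times f:X\times X\to Y\times Y$, which is exactly your composite $\pi\circ f_L$; you have simply factored this map and made the pushforward computation $f'_\ast\delta_X=\deg(f)\cdot\delta_Y$ and $f'_\ast(z_L)=(\deg(f)\cdot f_\ast z)_K$ explicit.
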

\begin{proof} 
The morphism $f$ induces a proper morphism $f\times f:X\times X\to Y\times Y$ and hence a proper morphism $f':X_{k(X)}\to Y_{k(Y)}$ with
$$
0=f'_\ast(\Tor(X)\cdot \delta_X-z_{k(X)})=\deg(f)\cdot \Tor(X)\cdot \delta_{Y}-f_\ast z_{k(X)}\in \CH_0(Y_{k(Y)}).
$$
This implies
$
\Tor(Y)\mid \deg(f)\cdot \Tor(X) 
$, as we want.
\end{proof}

\begin{corollary}\label{cor:Tor(X)}
Let $Y$ be a proper $k$-variety of dimension $n$ and let $f:\CP^n_k\dashrightarrow Y$ be a dominant rational map.
Then $\Tor(Y)\mid \deg(f)$.
\end{corollary}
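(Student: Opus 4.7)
The plan is to reduce Corollary \ref{cor:Tor(X)} to Lemma \ref{lem:Tor(X)} by resolving the rational map $f$ to an honest morphism via its graph. First, let $\Gamma_f \subset \CP^n_k \times Y$ denote the closure of the graph of $f$, endowed with its reduced subscheme structure. Since $f$ is a rational map between $k$-varieties, $\Gamma_f$ is irreducible (it is the closure of the image of a nonempty open subset of $\CP^n_k$ under a morphism), so $\Gamma_f$ is a $k$-variety. Write $p : \Gamma_f \to \CP^n_k$ and $q : \Gamma_f \to Y$ for the two projections. Because $\CP^n_k \times Y$ is proper over $k$, so is the closed subscheme $\Gamma_f$, and in particular $q$ is proper. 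Moreover $p$ is a proper birational morphism, while $q$ is dominant (its image contains the image of $f$) and satisfies $\deg(q) = \deg(q)/\deg(p) = \deg(f)$, since on a dense open subset of $\Gamma_f$ we have $q = f \circ p$ and $p$ is an isomorphism there.

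Next, I would observe that $\Gamma_f$ is birational to $\CP^n_k$, hence rational, and in particular retract rational. By Lemma \ref{lem:ratl->decofdiag}, $\Gamma_f$ therefore admits a decomposition of the diagonal, and by the remark following Definition \ref{def:Tor(X)} this is equivalent to $\Tor(\Gamma_f) = 1$.

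Finally, applying Lemma \ref{lem:Tor(X)} to the proper dominant morphism $q : \Gamma_f \to Y$ yields
$$
\Tor(Y) \mid \deg(q) \cdot \Tor(\Gamma_f) = \deg(f) \cdot 1 = \deg(f),
$$
which is the desired conclusion. I do not anticipate a serious obstacle here: the corollary is really just Lemma \ref{lem:Tor(X)} plus the rationality of $\CP^n_k$, and the only mildly subtle point is keeping track that the generic degree does not get multiplied by some factor when passing from $f$ to $q$, which is handled by the birationality of $p$.
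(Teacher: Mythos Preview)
Your proof is correct and follows essentially the same approach as the paper's: resolve the rational map $f$ by passing to the closure of its graph $\Gamma_f\subset \CP^n_k\times Y$, observe that $\Gamma_f$ is a proper rational variety (hence has torsion order $1$ by Lemma~\ref{lem:ratl->decofdiag}), and then apply Lemma~\ref{lem:Tor(X)} to the second projection $\Gamma_f\to Y$, which has degree $\deg(f)$. The paper's argument is just a terser version of what you wrote.
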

\begin{proof}
Let $ X\subset \CP^n_k\times Y$ be the closure of the graph of $f$.
Then $X$ is a proper variety over $k$ and the second projection yields a morphism $f':X\to Y$ of degree $\deg(f')=\deg(f)$.
Since $X$ is rational, it has torsion order 1 by Lemma \ref{lem:ratl->decofdiag} and so the corollary follows from Lemma \ref{lem:Tor(X)}.
\end{proof}

\section{Specialization method} \label{sec:specialization}

The specialization method that we explain in this section was initiated by Voisin \cite{voisin}, with important improvements by Colliot-Th\'el\`ene--Pirutka \cite{CTP} and later some further improvements by the author \cite{Sch-duke,Sch-JAMS}.

\begin{notation} \label{not:specialization}
Let $R$ be a discrete valuation ring with fraction field $K$ and algebraically closed residue field $k$.
Let $\pi:\mathcal X\to \Spec R$ be a proper flat $R$-scheme with connected fibres and denote by $X=\mathcal X\times \overline K$ and $Y=\mathcal X\times k$ the geometric generic fibre and special fibre of $\pi$.
\end{notation}

Fulton showed the following specialization result for Chow groups, see \cite[\S 4.4]{fulton2} and \cite[\S 20.3]{fulton}.

\begin{theorem} \label{thm:fulton}
In the notation \ref{not:specialization}, let $\mathcal X_\eta=\mathcal X\times_RK$.
Then we have for any integer $i$ a specialization map
$$
sp:\CH_i(\mathcal X_\eta)\longrightarrow \CH_i(Y) ,
$$
which is defined by 
$
sp(\gamma)=\overline \gamma|_{Y} ,
$ 
where $\overline \gamma$ denotes the closure of $\gamma$ in the total space $\mathcal X$ and where $\overline \gamma|_{Y} $ denotes its restriction to $Y$, i.e.\ the pullback of $\overline \gamma$ to the Cartier divisor $Y\subset \mathcal X$.  
\end{theorem}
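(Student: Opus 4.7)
The plan is to define $sp$ first on cycles via the prescribed formula and then verify that it descends to rational equivalence. The crux is the hypothesis that $\mathcal X \to \Spec R$ is flat with $R$ a DVR, which forces $Y \subset \mathcal X$ to be a principal Cartier divisor: if $t\in R$ is a uniformizer, then $Y = \{\pi^\ast t = 0\}$. For an integral closed subscheme $V\subset \mathcal X_\eta$ of Krull dimension $i$, its schematic closure $\overline V\subset \mathcal X$ is an integral $R$-scheme dominating $\Spec R$; it is therefore flat over $R$ and of Krull dimension $i+1$. In particular $\overline V$ contains no component of $Y$, and the Cartier divisor pullback $Y\cdot [\overline V]\in \CH_i(Y)$ (cf.\ \cite[Ch.\,2]{fulton}) is represented on cycles by $\overline V\cap Y$ with the usual multiplicities. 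Extending linearly yields a closure map $\overline{(\cdot)}: Z_i(\mathcal X_\eta)\to Z_{i+1}(\mathcal X)$, and composition with $Y\cdot(-): Z_{i+1}(\mathcal X)\to \CH_i(Y)$ provides the candidate homomorphism $\widetilde{sp}: Z_i(\mathcal X_\eta)\to \CH_i(Y)$, $\gamma\mapsto \overline\gamma|_Y$.

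The core of the argument is to show that $\widetilde{sp}$ vanishes on rationally trivial cycles. Write such a $\gamma$ as $\gamma = \sum_j \operatorname{div}_{W_j}(\phi_j)$ with $W_j\subset \mathcal X_\eta$ integral of dimension $i+1$ and $\phi_j\in K(W_j)^\ast$. Since $\overline{W_j}$ and $W_j$ have the same function field, each $\phi_j$ extends uniquely to a nonzero rational function on $\overline{W_j}$, and a direct local computation at each codimension-one point of $\overline{W_j}$ yields
$$
\operatorname{div}_{\overline{W_j}}(\phi_j) = \overline{\operatorname{div}_{W_j}(\phi_j)} + \beta_j,
$$
where $\beta_j\in Z_{i+1}(\mathcal X)$ collects the components of the divisor supported on $Y$; these extra components correspond precisely to the codimension one points of $\overline{W_j}$ lying over the closed point of $\Spec R$, which by flatness of $\overline{W_j}/R$ are exactly the components of $\overline{W_j}\cap Y$. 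Since $\operatorname{div}_{\overline{W_j}}(\phi_j)\sim 0$ in $\CH_{i+1}(\mathcal X)$, summing over $j$ gives $\overline\gamma \equiv -\sum_j \beta_j$ modulo rational equivalence. Thus it suffices to prove $Y\cdot \iota_\ast\alpha = 0$ in $\CH_i(Y)$ for every $\alpha\in Z_{i+1}(Y)$, where $\iota: Y\hookrightarrow \mathcal X$.

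This last vanishing is the self-intersection formula applied to a trivial normal bundle. By \cite[Cor.\,2.4.2]{fulton}, $Y\cdot \iota_\ast\alpha = c_1(N_{Y/\mathcal X})\cap \alpha$, and $N_{Y/\mathcal X}=\mathcal O_{\mathcal X}(Y)|_Y = \pi^\ast \mathcal O_{\Spec R}(\mathfrak m_R)|_Y$. The line bundle $\mathcal O_{\Spec R}(\mathfrak m_R)$ is globally trivialized by $t^{-1}$, so its pullback to $\mathcal X$ and a fortiori its restriction to $Y$ are trivial, forcing $c_1(N_{Y/\mathcal X})=0$. Combining the three steps gives $sp(\gamma)=0$ whenever $\gamma$ is rationally trivial, so $\widetilde{sp}$ descends to a well-defined homomorphism $sp: \CH_i(\mathcal X_\eta)\to \CH_i(Y)$ matching the stated formula. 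The main technical obstacle I expect is the bookkeeping in the identity $\operatorname{div}_{\overline{W_j}}(\phi_j)=\overline{\operatorname{div}_{W_j}(\phi_j)}+\beta_j$: one must carefully use flatness of $\overline{W_j}\to \Spec R$ to separate the horizontal and vertical contributions to the divisor, and verify that the horizontal part agrees with the closure of $\operatorname{div}_{W_j}(\phi_j)$ with the correct multiplicities.
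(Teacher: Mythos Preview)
Your argument is correct and lands on the same decisive fact as the paper: since $Y$ is a \emph{principal} Cartier divisor on $\mathcal X$, the Gysin pullback $\iota^\ast$ kills every class pushed forward from $Y$, i.e.\ $\iota^\ast\iota_\ast=0$ (equivalently, $N_{Y/\mathcal X}$ is trivial so $c_1(N_{Y/\mathcal X})\cap\alpha=0$). Where you differ is in the reduction to this fact. The paper invokes the localization exact sequence
\[
\CH_{i+1}(Y)\stackrel{\iota_\ast}{\longrightarrow}\CH_{i+1}(\mathcal X)\stackrel{j^\ast}{\longrightarrow}\CH_i(\mathcal X_\eta)\longrightarrow 0
\]
to conclude in one stroke that if $\gamma\sim\gamma'$ on $\mathcal X_\eta$ then $[\overline\gamma]-[\overline{\gamma'}]=\iota_\ast\xi$ for some $\xi\in\CH_{i+1}(Y)$, and then applies $\iota^\ast$. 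You instead unwind this by hand: you track $\operatorname{div}_{\overline{W_j}}(\phi_j)$, split it into its horizontal part $\overline{\operatorname{div}_{W_j}(\phi_j)}$ and a vertical correction $\beta_j$ supported on $Y$, and then kill $\beta_j$ via the self-intersection formula. Your bookkeeping identity is exactly what goes into the proof of the localization sequence in this situation, so your route is a transparent special case of the paper's black-box citation; it buys explicitness at the cost of a little extra verification (the horizontal/vertical decomposition you flag as the main technical point), while the paper's route is shorter but hides that computation inside \cite[Proposition 1.8]{fulton}.
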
 
\begin{proof}
Since $R$ is a discrete valuation ring, the closure $\overline \gamma$ in $\mathcal X$ of a cycle $\gamma$ on the generic fibre $\mathcal X_\eta$ will automatically be  flat over $R$.
Hence the map in the theorem exists on the level of cycles and we only need to see that it descends to the Chow group modulo rational equivalence.
By \cite[p.\ 154, \S 1.9]{fulton}, the following sequence is exact
$$
\CH_{i+1}(Y)\stackrel{\iota_\ast} \longrightarrow\CH_{i+1}(\mathcal X)\stackrel{j^\ast}\longrightarrow \CH_i(\mathcal X_\eta)\longrightarrow 0 ,
$$
where $\iota:Y\to\mathcal X$  and $j:\mathcal X_\eta\to \mathcal X$ denote the natural morphisms.
Assume that $\gamma$ and $\gamma'$ are two rationally equivalent $i$-dimensional cycles on $\mathcal X_\eta$ with closures $\overline \gamma$ and $\overline \gamma'$ in $\mathcal X$.
Then $j^\ast[\overline \gamma]=j^\ast[\overline \gamma']$ and so
$$
[\overline \gamma]-[\overline \gamma']=\iota_\ast \xi
$$ 
for some $\xi\in \CH_{i+1}(Y) $.
On the other hand, $Y$ is a principal Cartier divisor on $\mathcal X$ that contains the support of $\iota_\ast \xi$ and so $
\iota^\ast( \iota_\ast \xi) =0
$ by the definition of $\iota^\ast$, see \cite[\S 1.7]{fulton2}.
Hence,
$
\iota^\ast[\overline \gamma]=\iota^\ast [\overline \gamma'] \in \CH_i(Y),
$
as we want. This concludes the proof.
\end{proof}

Fulton's theorem has the following consequence.

\begin{corollary}[\cite{voisin,CTP}] \label{cor:specialization-of-diagonal}
In the notation \ref{not:specialization}, if $X$ admits a decomposition of the diagonal and the special fibre $Y$ is pure-dimensional (e.g.\ this is automatic if $X$ is integral by Krull's Hauptidealsatz), then $Y$ admits a decomposition of the diagonal as well.
\end{corollary}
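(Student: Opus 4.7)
The plan is to apply Fulton's specialization map from Theorem \ref{thm:fulton} to the self-product $\mathcal X \times_R \mathcal X \to \Spec R$ in the top dimension $n = \dim X$, after first descending the hypothesized decomposition of $\Delta_X$ from the geometric generic fibre to a DVR whose fraction field is finite over $K$. A decomposition $[\Delta_X] = [X \times z] + [Z_X]$ in $\CH_n(X \times_{\bar K} X)$ involves only finitely many cycles, so the identity already holds in $\CH_n(X_{K'} \times_{K'} X_{K'})$ for some finite extension $K'/K$. Let $R'$ be the localization at a maximal ideal of the integral closure of $R$ in $K'$; this is a DVR with fraction field $K'$, and since $k$ is algebraically closed, its residue field is again $k$. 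Setting $\mathcal X' := \mathcal X \times_R R'$, the special fibre is still $Y$.

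Theorem \ref{thm:fulton} applied to $\mathcal X' \times_{R'} \mathcal X' \to \Spec R'$ now produces a specialization map
$$ sp : \CH_n(X_{K'} \times_{K'} X_{K'}) \longrightarrow \CH_n(Y \times_k Y), $$
which I would apply to the descended decomposition term by term. The closure of $\Delta_{\mathcal X'_\eta}$ in $\mathcal X' \times_{R'} \mathcal X'$ is the image of the diagonal embedding of $\mathcal X'$, and its restriction to $Y \times_k Y$ equals $[\Delta_Y]$ as a cycle (the hypothesis that $Y$ is pure of dimension $n$ ensures the correct cycle-theoretic multiplicities). Similarly, a generator $[X_{K'} \times \{p\}]$ with $p$ a closed point closes up to $[\mathcal X' \times_{R'} \overline{\{p\}}]$ and restricts to $[Y \times (\overline{\{p\}}|_Y)]$, so $[X_{K'} \times z]$ specializes to $[Y \times \tilde z]$ for an explicit $0$-cycle $\tilde z$ on $Y$.

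The main delicate point is to verify that $sp([Z_X])$ still does not dominate any component of $Y$ under the first projection. Writing $Z_X = \sum n_i [V_i]$ with $V_i$ integral, each image $W_i := \pr_1(V_i)$ is a proper closed subset of $X_{K'}$, hence of dimension $< n$. The closure $\overline{V_i}$ inside $\mathcal X' \times_{R'} \mathcal X'$ has first projection contained in $\overline{W_i} \subset \mathcal X'$, which is flat over $R'$ with generic fibre of dimension $< n$; flatness then forces $\dim(\overline{W_i}|_Y) < n$. Since $Y$ is pure of dimension $n$, no irreducible component of $Y$ can be contained in $\overline{W_i}|_Y$, so $sp([Z_X])$ does not dominate any component of the first factor of $Y \times_k Y$. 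Combining the three identifications yields
$$ [\Delta_Y] = [Y \times \tilde z] + sp([Z_X]) \in \CH_n(Y \times_k Y), $$
which is the desired decomposition of the diagonal of $Y$.
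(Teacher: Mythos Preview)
Your proof is correct and follows essentially the same approach as the paper: descend the decomposition to a finite extension, pass to a DVR with fraction field that extension and residue field $k$, and apply Fulton's specialization map from Theorem \ref{thm:fulton} term by term, using flatness and the pure-dimensionality of $Y$ to control the image of $sp([Z_X])$. The only cosmetic difference is that the paper first completes $R$ so that the integral closure of $R$ in $L$ is itself a DVR (citing \cite{EGAIV}), whereas you localize the integral closure at a maximal ideal; both yield the required DVR $R'$ with residue field $k$.
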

\begin{proof} 
Replacing $R$ by its completion we may assume that in the notation \ref{not:specialization} $R$ is complete.
Let $n:=\dim Y$.
Since $\pi$ is flat and $Y$ is pure-dimensional, $X$ is also pure-dimensional of dimension $n$.
Assume that there is a decomposition of the diagonal
$$
[\Delta_X]=[X\times z]+[Z_X]\in \CH_n(X\times_{\overline K} X) 
$$
of $X$.
This implies that there is a finite field extension $L/K$, so that the above relation holds over $L$.
Since $R$ is complete, the integral closure of $R$ in $L$ is a discrete valuation ring $R'$, see e.g.\ \cite[Th\'eor\`eme 23.1.5 and Corollaire 23.1.6]{EGAIV}.
Hence, replacing $\mathcal X\to \Spec R$ by the finite base change $\Spec R'\to \Spec R$, we may assume that the above relation is defined and holds already over the field $K$.
Applying the specialization map from Theorem \ref{thm:fulton}, we then find
$$
[\Delta_Y]=sp([\Delta_X])=[Y\times sp(z)]+sp[Z_X]\in \CH_n(Y\times_{k} Y),
$$
where $sp(z)\in \CH_0(Y)$ is a 0-cycle on $Y$.
Projecting the closure of $Z_X$ to the first factor yields a cycle on $\mathcal X$ that is automatically flat over $R$ and which has generically dimension at most $n-1$.
It follows that also the special fibre of this cycle has dimension at most $n-1$.
We deduce that $sp[Z_X]$ can be represented by a cycle on $Y\times_k Y$ whose image via the first projection has dimension at most $n-1$ and so it does not dominate any component of $Y$, as $Y$ is of pure dimension $n$.
Hence, $Y$ admits a decomposition of the diagonal, as we want.
This concludes the corollary.
\end{proof}

Together with Theorem \ref{thm:dec->H_nr=0}, we deduce the following criterion, which has in this form been proven by Colliot-Th\'el\`ene--Pirutka, thereby generalizing an earlier version of Voisin \cite{voisin}, who initiated the method. 

\begin{theorem}[{\cite[Th\'eor\`eme 1.12]{CTP}}] \label{thm:specialization-CTP} 
In the notation \ref{not:specialization}, assume that $Y$ is integral and that the following holds:
\begin{itemize}
\item $ H^i_{nr}(k(Y)/k,\mu_m^{\otimes j})\neq 0$ for some $i>0$ and some $j$;
\item there is a resolution $\tau:Y'\to Y$ such that $\tau_\ast: \CH_0(Y'_L)\to \CH_0(Y_L)$ is an isomorphism for all field extensions $L/k$.
\end{itemize} 
Then $X$ does not admit a decomposition of the diagonal.
\end{theorem}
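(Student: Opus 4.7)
The plan is to argue by contradiction: assume that $X$ admits a decomposition of the diagonal and derive $H^i_{nr}(k(Y)/k,\mu_m^{\otimes j})=0$ for all $i>0$, contradicting the first hypothesis. The overall strategy is to propagate the decomposition of the diagonal from $X$ to the special fibre $Y$, then lift it up the resolution $\tau:Y'\to Y$, and finally invoke the vanishing result from Theorem \ref{thm:dec->H_nr=0}.

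First, since $Y$ is integral (hence of pure dimension), Corollary \ref{cor:specialization-of-diagonal} applied to the assumed decomposition of $\Delta_X$ implies that $Y$ itself admits a decomposition of the diagonal. As $Y$ is a variety, Lemma \ref{lem:dec-of-diag-CH_0} then produces a $0$-cycle $z\in Z_0(Y)$ such that $[\delta_Y]=[z_{k(Y)}]\in \CH_0(Y_{k(Y)})$.

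The key step is to transfer this decomposition to the smooth proper resolution $Y'$. Since $\tau$ is proper and birational, it identifies the function fields; set $L:=k(Y)=k(Y')$. Because $k$ is algebraically closed, every closed point in the support of $z$ has residue field $k$, and surjectivity of $\tau$ together with the fact that every non-empty fibre is a variety over the algebraically closed field $k$ allows me to lift each such point to a $k$-rational point of $Y'$, producing a $0$-cycle $z'\in Z_0(Y')$ with $\tau_\ast z'=z$. Birationality of $\tau$ yields $\tau_\ast[\delta_{Y'}]=[\delta_Y]$ in $\CH_0(Y_L)$, so
\[
\tau_\ast[\delta_{Y'}]=[\delta_Y]=[z_L]=\tau_\ast[z'_L]\in \CH_0(Y_L).
\]
By the second hypothesis, $\tau_\ast:\CH_0(Y'_L)\to \CH_0(Y_L)$ is an isomorphism, hence injective, and I conclude that $[\delta_{Y'}]=[z'_L]\in \CH_0(Y'_L)$. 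Applying Lemma \ref{lem:dec-of-diag-CH_0} once more, $Y'$ admits a decomposition of the diagonal.

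Finally, $Y'$ is smooth and proper over the algebraically closed field $k$, so Theorem \ref{thm:dec->H_nr=0} gives $H^i_{nr}(k(Y')/k,\mu_m^{\otimes j})=0$ for all $i>0$, and birationality of $\tau$ gives $k(Y')=k(Y)$, producing the desired contradiction. The main delicate point is the middle step, i.e.\ transferring the decomposition across the resolution $\tau$: one needs the injectivity of $\tau_\ast$ on $0$-cycles after arbitrary base change (this is precisely the second hypothesis) together with the ability to lift the supporting $0$-cycle, which works out cleanly because $k$ is algebraically closed and $\tau$ is surjective. All other steps are immediate applications of the results already established earlier in the paper.
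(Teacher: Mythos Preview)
Your proof is correct and follows essentially the same route as the paper's: specialize the decomposition from $X$ to $Y$ via Corollary~\ref{cor:specialization-of-diagonal}, translate it into the identity $[\delta_Y]=[z_{k(Y)}]$ via Lemma~\ref{lem:dec-of-diag-CH_0}, lift through $\tau$ using the hypothesis on $\tau_\ast$, and conclude with Theorem~\ref{thm:dec->H_nr=0}. The only cosmetic difference is that you lift the $0$-cycle $z$ to $Y'$ by hand using that $k$ is algebraically closed and $\tau$ is surjective, whereas the paper leaves this step implicit (it is also covered by the surjectivity of $\tau_\ast$ for $L=k$).
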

\begin{proof}
For a contradiction, assume that $X$ admits a decomposition of the diagonal.
Then so does $Y$ by Corollary \ref{cor:specialization-of-diagonal}.
By Lemma \ref{lem:dec-of-diag-CH_0}, $\delta_Y=z_{k(Y)}$ holds in $\CH_0(Y_{k(Y)})$.
By assumption, $\tau_\ast: \CH_0(Y'_L)\to \CH_0(Y_L)$ is an isomorphism for $L=k(Y)$ and so we find $\delta_{Y'}=z'_{k(Y)}$ in $\CH_0(Y'_{k(Y)})$ and for some 0-cycle $z'$ on $Y'$.
Since $k(Y')=k(Y)$, this shows by Lemma \ref{lem:dec-of-diag-CH_0} that $Y'$ admits a decomposition of the diagonal, and so
 Theorem \ref{thm:dec->H_nr=0} yields
$$
H^i_{nr}(k(Y)/  k,\mu_m^{\otimes j})= 0,
$$
which contradicts our assumptions. 
This concludes the theorem.
\end{proof}

Thanks to our generalization of Merkurjev's pairing to the case of snc schemes in Section \ref{sec:snc}, we obtain immediately the following (new) variant of the above theorem. 

\begin{theorem} \label{thm:specialization} 
In the notation \ref{not:specialization}, assume that $Y$ is a snc scheme over $k$ with
$$
H^i_{nr}(Y /k,\mu_m^{\otimes j})\neq 0
$$
for some $i>0$ and some integer $m$ that is invertible in $k$.
Then $X$ does not admit a decomposition of the diagonal.
\end{theorem}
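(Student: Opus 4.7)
The plan is to run the same contradiction argument as in the proof of Theorem \ref{thm:specialization-CTP}, but to bypass the resolution hypothesis by exploiting the snc variant of Theorem \ref{thm:dec->H_nr=0}, which is in turn made possible by the snc pairing of Section \ref{sec:snc}. Concretely, I would argue by contradiction and assume that $X$ admits a decomposition of the diagonal.

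First I would invoke Corollary \ref{cor:specialization-of-diagonal} to propagate the decomposition of the diagonal from the geometric generic fibre $X$ to the special fibre $Y$. The hypothesis of that corollary requires $Y$ to be pure-dimensional, but this is automatic in our setting: by assumption $Y$ is an snc scheme, and the definition of an snc scheme in Section \ref{sec:snc} already builds in that $Y$ is pure-dimensional. (Note that, as in the proof of Corollary \ref{cor:specialization-of-diagonal}, one may first pass to the completion of $R$ and then to a finite extension $R'$ in order to ensure that the decomposition of the diagonal on $X$ is actually defined over the fraction field; this step uses only the properness and flatness of $\pi$, and does not disturb the snc structure of $Y$.)

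Next I would apply Theorem \ref{thm:dec->H_nr=0} directly to the proper snc scheme $Y$ over the algebraically closed residue field $k$. This is precisely the point at which the generalization to snc schemes pays off: Theorem \ref{thm:dec->H_nr=0} is formulated for proper snc schemes (and in fact was proved using the snc variant of Merkurjev's pairing, namely Corollary \ref{cor:pairing-Gamma-snc}), and it tells us that a decomposition of the diagonal on $Y$ forces
\[
H^i_{nr}(Y/k,\mu_m^{\otimes j}) = 0 \quad \text{for all } i>0,
\]
since $k$ is algebraically closed so that $H^i(k,\mu_m^{\otimes j})=0$ for $i>0$. This directly contradicts the standing assumption that $H^i_{nr}(Y/k,\mu_m^{\otimes j})\neq 0$ for some $i>0$, and hence $X$ cannot admit a decomposition of the diagonal.

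There is really no main obstacle left, since all the substantive work has been done in assembling the snc version of Merkurjev's pairing and the resulting Theorem \ref{thm:dec->H_nr=0}; the only point that requires any care is the book-keeping in the specialization step (completion plus finite base change, and verifying pure-dimensionality), and the observation that the resulting decomposition of the diagonal on $Y$ is of the shape \eqref{def:dec-of-diag} with respect to the pure dimension inherited from the snc structure, so that Theorem \ref{thm:dec->H_nr=0} is applicable without modification.
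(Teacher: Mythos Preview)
Your proposal is correct and follows essentially the same route as the paper: argue by contradiction, specialize the decomposition of the diagonal to $Y$ via Corollary \ref{cor:specialization-of-diagonal}, and then apply the snc version of Theorem \ref{thm:dec->H_nr=0} to force $H^i_{nr}(Y/k,\mu_m^{\otimes j})=0$ for $i>0$. The extra remarks you make about pure-dimensionality and the completion/base-change step are helpful elaborations but do not alter the strategy.
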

\begin{proof}
For a contradiction, assume that $X$ admits a decomposition of the diagonal.
Then so does $Y$ by Corollary \ref{cor:specialization-of-diagonal}.
Hence,  Theorem \ref{thm:dec->H_nr=0} yields
$$
H^i_{nr}(Y/  k,\mu_m^{\otimes j})= 0,
$$
which contradicts our assumptions. 
This concludes the theorem.
\end{proof}
  
To apply the above theorems to a given projective variety $X$, one has to construct a resolution of an integral degeneration $Y$ of $X$, or an $R$-model $\mathcal X$ of $X$ whose special fibre is an snc scheme.
On the other hand, the special fibre needs to have non-trivial unramified cohomology and practice shows that this usually forces the model $\mathcal X$ to be highly non-smooth.

The presence of the singularities in the special fibre makes it often very hard   (and sometimes practically impossible) to compute a model $\mathcal X$ or a resolution $\tau:Y'\to Y$ as in the above theorems.
This problem was solved in \cite{Sch-duke}, where it was shown that much more singular models can be used.
This method was generalized further in \cite[Proposition 3.1]{Sch-JAMS} (allowing alterations instead of resolutions), as follows.

\begin{theorem}[\cite{Sch-duke,Sch-JAMS}] \label{thm:specialization-schreieder}
In the notation \ref{not:specialization}, assume that $X$ and $Y$ are integral.
Let $m\geq 2$ be an integer that is invertible in $k$.  
Suppose that for some integers $i\geq 1$ and $j$ there is a class $\alpha \in H^i_{nr}(k(Y)/ k,\mu_m^{\otimes j})$ of order $m$  
 such that for any alteration $\tau:Y'\to Y$ and any (scheme) point $x\in Y'$ with $\tau(x)\in  Y^{\sing}$ we have
\begin{align} \label{cond:vanishing}
(\tau^\ast\alpha)|_{x}=0\in H^i(\kappa(x),\mu_m^{\otimes j}) .
\end{align}
Then $X$ does not admit a decomposition of the diagonal. 
\end{theorem}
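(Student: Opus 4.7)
The plan is to argue by contradiction: suppose $X$ admits a decomposition of the diagonal. Then by Corollary \ref{cor:specialization-of-diagonal}, so does $Y$, and Lemma \ref{lem:dec-of-diag-CH_0} produces a 0-cycle $z\in Z_0(Y)$ of degree one with $[\delta_Y]=[z_{k(Y)}]\in \CH_0(Y_{k(Y)})$. The strategy is to pull $\alpha$ back along a carefully chosen alteration $\tau\colon Y'\to Y$ with $Y'$ smooth projective, and to prove that the resulting class $\tau^*\alpha$ vanishes. Using Gabber's prime-to-$m$ refinement of de Jong's alteration theorem, I would choose $\tau$ of degree $d:=\deg(\tau)$ coprime to $m$, so that the identity $\tau_*\tau^*\alpha=d\cdot\alpha$ (valid at the level of \'etale cohomology) transfers the vanishing of $\tau^*\alpha$ to the vanishing of $\alpha$, contradicting the assumption that $\alpha$ has order $m$.

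A preliminary step is to verify that $\tau^*\alpha\in H^i_{nr}(k(Y')/k,\mu_m^{\otimes j})$, which by Proposition \ref{prop:smooth-model} reduces to checking that the residue of $\tau^*\alpha$ along each prime divisor $D'\subset Y'$ vanishes. I would split into three cases according to the image $\tau(D')\subset Y$: if $\tau(D')$ has codimension $\geq 2$ in $Y$, the valuation on $k(Y')$ associated to $D'$ restricts trivially to $k(Y)$ and $\tau^*\alpha$ tautologically lifts to the valuation ring; if $\tau(D')$ is a prime divisor of $Y$ not contained in $Y^{\sing}$, the restricted valuation is a geometric valuation on $k(Y)$ and the compatibility diagram (\ref{diag:del-pullbacks}) combined with the unramifiedness of $\alpha$ gives vanishing; if $\tau(D')\subset Y^{\sing}$ is a prime divisor, then hypothesis (\ref{cond:vanishing}) applied at the generic point $x=\eta_{D'}$ presupposes that $(\tau^*\alpha)|_x$ is defined, which by Theorem \ref{thm:injectivity+purity}(b) forces $\partial_{D'}(\tau^*\alpha)=0$.

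The heart of the argument is to use the decomposition of the diagonal on $Y$ to force $\tau^*\alpha=0$ via Merkurjev's pairing on the smooth proper $k(Y')$-variety $Y'_{k(Y')}$. Compatibility of pushforward with the diagonal shows $\tau_*[\delta_{Y'}]=(\delta_Y)_{k(Y')}=z_{k(Y')}$ in $\CH_0(Y_{k(Y')})$. Pick a 0-cycle $z'\in Z_0(Y')$ of degree one with $\tau_*z'=z$; such a lift exists combinatorially because $z$ has degree one and one may distribute the multiplicities over preimage points. Each closed-point contribution $\langle[q]_{k(Y')},\tau^*\alpha\rangle$ of $z'_{k(Y')}$ vanishes: if $q$ lies over $Y^{\sm}$, its residue field is $k$ and $H^i(k,\mu_m^{\otimes j})=0$ for $i\geq 1$ because $k$ is algebraically closed; if $q$ lies over $Y^{\sing}$, vanishing is forced by hypothesis (\ref{cond:vanishing}). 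On the other hand $\langle\delta_{Y'},\tau^*\alpha\rangle=\tau^*\alpha$ tautologically. Since $\tau_*(\delta_{Y'}-z'_{k(Y')})=0$ in $\CH_0(Y_{k(Y')})$, one would like to conclude that the pairing $\langle\delta_{Y'}-z'_{k(Y')},\tau^*\alpha\rangle$ itself vanishes and hence $\tau^*\alpha=0$. Together with the coprimality afforded by Gabber's theorem, this yields $\alpha=0$.

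The main obstacle lies in this last step: deducing $\langle\delta_{Y'}-z'_{k(Y')},\tau^*\alpha\rangle=0$ from $\tau_*(\delta_{Y'}-z'_{k(Y')})=0$, since Lemma \ref{lem:pairing}(i) (the projection formula for Merkurjev's pairing) requires smoothness of the target variety, whereas $Y$ is singular. The plan is to analyze the degree-zero class $\delta_{Y'}-z'_{k(Y')}$ on $Y'_{k(Y')}$ and reduce its pairing with $\tau^*\alpha$, via rational equivalences on $Y'_{k(Y')}$ together with correspondence-level manipulations on the smooth product $Y'\times Y'$ using Corollary \ref{cor:pairing-Gamma}, to a sum of contributions of only two basic types: either closed-point contributions with algebraically closed residue field (vanishing because $H^i(k,\mu_m^{\otimes j})=0$) or cycles supported over $Y^{\sing}$ (vanishing by hypothesis (\ref{cond:vanishing}), possibly applied iteratively after passing to a further alteration that resolves non-flatness of $\tau$ at the singular points of $Y$). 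The crucial feature that makes such an iterated reduction possible is precisely that (\ref{cond:vanishing}) is postulated for \emph{all} alterations, not just for a fixed chosen one.
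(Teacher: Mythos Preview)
Your overall strategy---argue by contradiction, specialize the decomposition to $Y$, choose a Gabber alteration $\tau:Y'\to Y$ of degree coprime to $m$, and use Merkurjev's pairing to force $\deg(\tau)\cdot\alpha=0$---matches the paper. The genuine gap is exactly where you locate it, and your proposed fix does not close it. You work on $Y'_{k(Y')}$ and arrive at $\tau_*(\delta_{Y'}-z'_{k(Y')})=0$ in $\CH_0(Y_{k(Y')})$, but the vanishing of a pushforward to the \emph{singular} scheme $Y_{k(Y')}$ gives no control over the class on $Y'_{k(Y')}$: there is no projection formula available, and the kernel of $\tau_*$ on $\CH_0$ has no reason to consist only of classes of the two special shapes you list. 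The vague appeal to ``correspondence-level manipulations'' and iterated alterations does not supply the missing implication.

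The paper sidesteps this entirely by working on $Y'_{k(Y)}$ rather than $Y'_{k(Y')}$, and by replacing $\delta_{Y'}$ with the $0$-cycle $\delta_\tau\in Z_0(Y'_{k(Y)})$ induced by the graph of $\tau$ in $Y'\times Y$. One first restricts the identity $\delta_Y=z_{k(Y)}$ to the open subset $Y^{\sm}_{k(Y)}$, then pulls back along $\tau$ to $\tau^{-1}(Y^{\sm})_{k(Y)}$ (this is legitimate because $Y^{\sm}$ is smooth, so Fulton's refined Gysin map applies), and finally closes up via the localization sequence $\CH_0(\tau^{-1}(Y^{\sing})_{k(Y)})\to\CH_0(Y'_{k(Y)})\to\CH_0(\tau^{-1}(Y^{\sm})_{k(Y)})\to 0$. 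This yields an honest identity
\[
\delta_\tau \;=\; z_{k(Y)} + z' \ \in\ \CH_0(Y'_{k(Y)}),
\]
with $z$ a $0$-cycle on $Y'$ and $z'$ supported on $\tau^{-1}(Y^{\sing})_{k(Y)}$. No pushforward to the singular $Y$ is ever needed. Pairing with $\tau^*\alpha$ kills $z_{k(Y)}$ (since $k$ is algebraically closed and $i\ge 1$) and $z'$ (by hypothesis~(\ref{cond:vanishing})), while the definition of the pairing gives directly $\langle\delta_\tau,\tau^*\alpha\rangle=\tau_*\tau^*\alpha=\deg(\tau)\cdot\alpha$ in $H^i(k(Y),\mu_m^{\otimes j})$, yielding the desired contradiction.

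Two minor remarks. First, your preliminary verification that $\tau^*\alpha$ is unramified is unnecessary: it follows at once from Proposition~\ref{prop:functoriality}(a) applied to the field extension $k(Y')/k(Y)$. Second, your case~1 in that verification is incorrect as stated: when $\tau(D')$ has codimension $\ge 2$ in $Y$, the restricted valuation on $k(Y)$ need not be trivial (think of the exceptional divisor of a blow-up); it can be a nontrivial non-geometric valuation, about which Definition~\ref{def:H_nr} says nothing.
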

 
\begin{remark}
The main point of the above result lies in the fact that in many situations of interest, condition (\ref{cond:vanishing}) is automatically satisfied (cf.\ Theorem  \ref{thm:vanishing} below); that is, one often
 gets the condition (\ref{cond:vanishing}) on the singularities for free, without even computing an alteration (or resolution).  
\end{remark} 
 
 \begin{remark}
 It is not hard to see that the proof that we give below still works in the case where the special fibre might be reducible and $Y$ in the above theorem is replaced by a reduced component of the special fibre, see  \cite[Proposition 6.1]{Sch-torsion} for more details.  
 \end{remark}

\begin{remark}
The proof will show more generally that  
 the torsion order of $X$ is either infinite or divisible by $m$, see Definition \ref{def:Tor(X)}.
\end{remark}

\begin{proof}[Proof of Theorem \ref{thm:specialization-schreieder}]
Let $A=\mathcal O_{\mathcal X,Y}$ be the local ring of $\mathcal X$ at the generic point of $Y$.
Since $Y$ is reduced, it follows that $A$ is a discrete valuation ring with residue field $k(Y)$.
Since $X$ admits a decomposition of the diagonal, so does $Y$ by Corollary \ref{cor:specialization-of-diagonal}.
We can restrict this identity in the Chow group of $Y\times Y$ to $Y_{k(Y)}$ and get an equality
$$
\delta_{Y}=z_{k(Y)}\in \CH_0(Y_{k(Y)})
$$
where $z$ is a 0-cycle on $Y$ and $\delta_Y$ denotes the 0-cycle that is induced by the diagonal.

By Gabber's improvement  of de Jong's alteration theorem \cite{deJong}, there is an alteration $\tau:Y'\to Y$ whose degree is coprime to $m$, see \cite[Theorem 2.1]{IT}.
We would like to pull back the above relation to the Chow group of 0-cycles of $Y'_{k(Y)}$.
In general this is impossible if $Y_{k(Y)}$ is not smooth.
Instead we can restrict the above equality to the smooth locus of $Y_{k(Y)}$ (using flat pullbacks) and pulling back this identity to the Chow group of 0-cycles on $\tau^{-1}(Y^{\sm}_{k(Y)})$.
By the localization exact sequence (see \cite[Proposition 1.8]{fulton}), we then get an identity
\begin{align} \label{eq:delta_tau}
\delta_{\tau}=z_{k(Y')}+z'\in \CH_0(Y'_{k(Y)}) ,
\end{align}
where $\delta_\tau$ is the 0-cycle on $Y'_{k(Y)}$ that is induced by the graph of $\tau$ inside $Y'\times Y$, $z\in \CH_0(Y')$ is a 0-cycle on $Y'$ and $z'\in \CH_0(Y'_{k(Y)})$ is a 0-cycle whose support satisfies
$$
\supp(z')\subset \tau^{-1}(Y^{\sing})_{k(Y)}.
$$ 
The definition of the pairing in (\ref{def:pairing}) thus shows by (\ref{cond:vanishing}) that
$
\langle  z',\tau^\ast \alpha\rangle=0.
$
Since  $k$ is algebraically closed and so $H^i(k,\mu_m^{\otimes j})=0$, we also have
$
\langle z_{k(Y')},\tau^\ast \alpha\rangle=0.
$
By linearity, we then get
$$
\langle z_{k(Y')}+z',\tau^\ast \alpha\rangle=0.
$$
Using (\ref{eq:delta_tau}), the above relation in $ \CH_0(Y_{k(Y)}) $ thus shows by Corollary \ref{cor:pairing} that
$$
\langle \delta_{\tau},\tau^\ast \alpha\rangle=0.
$$
On the other hand, the definition of the pairing in (\ref{def:pairing}) yields
$$
\langle \delta_{\tau},\tau^\ast \alpha\rangle=\langle \tau_\ast \delta_{\tau}, \alpha\rangle=\langle \deg \tau \cdot \delta_Y, \alpha\rangle=\deg \tau\cdot \alpha.
$$
Hence,
$$
\deg \tau\cdot \alpha=0\in H^i(k(Y),\mu_m^{\otimes j}) ,
$$
which contradicts the fact that $\alpha$ is nonzero and $\deg \tau$ is coprime to $m$.
This concludes the proof of the theorem.
\end{proof} 

Theorems \ref{thm:specialization-CTP}, \ref{thm:specialization} and \ref{thm:specialization-schreieder} yield together with Lemma \ref{lem:ratl->decofdiag} a powerful method to obstruct rationality of varieties that have interesting specializations.
Using entirely different methods, Nicaise--Shinder \cite{NS} and Kontsevich--Tschinkel \cite{KT} recently showed the following related result. 
 
 \begin{theorem}[\cite{NS,KT}] \label{thm:NS-KT}
 In the notation \ref{not:specialization}, assume that $k$ has characteristic 0, that $\mathcal X$ is regular and that $Y$ is an snc scheme.
Let $Y_l$ with $l\in I$ denote the irreducible components of $Y$ and write $Y_J:=\bigcap_{l\in J} Y_l$ for any $J\subset I$.
If
\begin{align} \label{eq:thm:NS-KT}
\sum_{\emptyset\neq J\subset I} (-1)^{|J|-1} [Y_J\times \CP_k^{|J|-1}] \neq [\CP_k^{\dim X}]
\end{align}
holds true in the free abelian group on (stable) birational equivalence classes of smooth $k$-varieties, 
then $X$ is not (stably) rational. 
 \end{theorem}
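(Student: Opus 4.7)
The plan is to follow the motivic approach of Nicaise--Shinder. Let $\mathbb{L} := [\A^1_k] \in K_0(\operatorname{Var}_k)$ denote the Lefschetz class. By the theorem of Larsen--Lunts (whose proof in characteristic zero uses weak factorization and resolution of singularities), the quotient ring $K_0(\operatorname{Var}_k)/(\mathbb{L})$ is canonically isomorphic to the monoid ring $\Z[SB_k]$ on stable birational equivalence classes of smooth projective $k$-varieties, with multiplication induced by products. In particular, every stably rational smooth projective $k$-variety maps to the class of a point in this quotient, and distinct stable birational classes remain distinct.

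The central construction is the motivic volume, a ring homomorphism
\begin{align*}
\operatorname{Vol}_R \colon K_0(\operatorname{Var}_K) \longrightarrow K_0(\operatorname{Var}_k)/(\mathbb{L}),
\end{align*}
characterised by the property that whenever a smooth projective $K$-variety $X_K$ admits a strictly semistable $R$-model $\mathcal{X}$ with snc special fibre $Y = \bigcup_{l \in I} Y_l$, one has
\begin{align*}
\operatorname{Vol}_R([X_K]) \equiv \sum_{\emptyset \neq J \subset I}(-1)^{|J|-1}[Y_J \times \CP^{|J|-1}_k] \pmod{\mathbb{L}}.
\end{align*}
Note that our hypotheses apply: since $\mathcal{X}$ is regular and $Y$ is reduced snc, $\mathcal{X}$ is strictly semistable, so the formula applies directly. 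The hard step, which accounts for the bulk of the work and is where characteristic zero is crucially used, is proving that the right hand side depends only on $X_K$. Two strictly semistable models of $X_K$ over $R$ are connected, via the weak factorization theorem of Abramovich--Karu--Matsuki--W\l odarczyk, by a sequence of blow-ups along smooth centres transverse to the special fibre; one then verifies that a single such blow-up alters the formula by a multiple of $\mathbb{L}$, using the projective bundle formula $[\mathbb{P}(\mathcal{E})] = [E]\cdot [\CP^{r-1}_k]$ in $K_0(\operatorname{Var}_k)$ combined with $[\CP^{r-1}_k] \equiv 1 \pmod{\mathbb{L}}$.

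To conclude, suppose for contradiction that the geometric generic fibre $X$ is stably rational. Replacing $R$ by the integral closure in a suitable finite extension of $K$ over which the stable rationality is defined (and applying semistable reduction to obtain a new strictly semistable model, whose associated formula is compatible with ramified base change by an explicit combinatorial identity), we may assume that $X_K$ itself is stably rational. Scissor relations then give $[X_K] \equiv [\CP^{\dim X}_K] \pmod{\mathbb{L}}$ in $K_0(\operatorname{Var}_K)$, and applying $\operatorname{Vol}_R$ produces
\begin{align*}
\sum_{\emptyset \neq J \subset I}(-1)^{|J|-1}[Y_J \times \CP^{|J|-1}_k] = [\CP^{\dim X}_k] \text{ in } \Z[SB_k],
\end{align*}
which directly contradicts the hypothesis (\ref{eq:thm:NS-KT}). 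Thus $X$ is not stably rational, and in particular not rational. As indicated, the principal obstacle is the well-definedness of $\operatorname{Vol}_R$, i.e.\ showing that the formula on the right hand side is a birational invariant of the generic fibre.
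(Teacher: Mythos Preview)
The paper does not supply its own proof of this theorem; it is stated with attribution to \cite{NS,KT} and no proof environment follows. So there is nothing in the paper to compare your argument against directly.

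That said, your sketch is a faithful outline of the Nicaise--Shinder approach via the motivic volume and the Larsen--Lunts theorem, and the key technical steps you flag (well-definedness of $\operatorname{Vol}_R$ via weak factorization, and compatibility with ramified base change after passing to a finite extension of $K$) are indeed the substantive points in \cite{NS}. One remark: the theorem as stated in the paper bundles two assertions, corresponding to the parenthetical ``(stable)''. Your argument, going through $K_0(\operatorname{Var}_k)/(\mathbb L)\cong \Z[SB_k]$, handles the stable birational version and hence the stable rationality conclusion. The rationality version, where the hypothesis is phrased in the free abelian group on \emph{birational} (not stable birational) classes, is the content of \cite{KT}; there one works instead with the Burnside ring of $k$-varieties and a specialization map constructed directly on birational types, bypassing Larsen--Lunts. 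Your final sentence ``and in particular not rational'' is of course correct once stable irrationality is established, but it does not recover the sharper KT statement when only the birational-class hypothesis is assumed.
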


\begin{remark} 
In order to show that (\ref{eq:thm:NS-KT}) holds, one has to show in practice that at least one of the terms $Y_J\times \CP_k^{|J|-1}$ is not (stably) rational and that it is not cancelled out in the alternating sum by the remaining terms.
This strategy has recently been implemented successfully by Nicaise--Ottem \cite{NO}, who use as an input known stable irrationality results, which in turn are proven by applications of Theorems \ref{thm:specialization-CTP} or \ref{thm:specialization-schreieder}.
\end{remark}

\section{Examples with nontrivial unramified cohomology} \label{sec:examples}

In this section we collect some of the most important known constructions of rationally connected varieties $Y$ over algebraically closed fields with nontrivial unramified cohomology.
All examples have the following approach in common.   

\begin{enumerate}
\item Start with a smooth projective rational variety $S$ and a nontrivial class $\alpha\in H^i(k(S),\mu_m^{\otimes j})$.
\item Construct another variety $Y_{\alpha}$ (usually of larger dimension) together with a dominant morphism $f_\alpha:Y_\alpha \to S$ such that $f_\alpha^\ast \alpha =0$.
This usually requires that the ramification locus of $Y_\alpha$ is contained in the ramification locus of $\alpha$ and often both loci will coincide.
\item Construct a rationally connected variety $Y$ with a dominant morphism $f:Y\to S$ such that:
\begin{enumerate}[(i)]
\item \'etale locally at any (codimension one) point of $S$ where $\alpha$ has nontrivial residue, the fibration $f$ coincides with the fibration $f_\alpha$ up to birational equivalence; \label{item:construction:3i}
\item Zariski locally over $S$, the fibrations $f$ and $f_\alpha$ are not birationally equivalent.\label{item:construction:3ii}
\end{enumerate} 
\item Show that $f^\ast \alpha \in H^i(k(Y),\mu_m^{\otimes j})$ is unramified over $k$ by exploiting (\ref{item:construction:3i}). 
The idea is that $f^\ast \alpha$ can possibly only have residues above points on $S$ where $\alpha$ ramifies, but condition (\ref{item:construction:3i}) ensures that \'etale locally at such points,  $f^\ast \alpha$ is 0 and so it must have trivial residue. 
\label{item:construction:4}
\item Find a reason why $f^\ast \alpha$ is nonzero; this is a tricky point, because $f^\ast \alpha$ will be unramified by item (\ref{item:construction:4}) and so it is a priori impossible to check nontriviality by a residue computation.
An obviously necessary condition here is condition (\ref{item:construction:3ii}). 
\label{item:construction:5}
\end{enumerate}

\begin{remark}
In \cite{Sch-JAMS} and \cite{Sch-torsion}, it was shown that the following flexible condition ensures the nonvanishing of $f^\ast \alpha$: $f^\ast \alpha$ is nonzero if there is a degeneration $Y_0\to S$ of $Y\to S$, so that there is a $k(S)$-rational point in the smooth locus of the generic fibre of $Y_0\to S$.
\end{remark}

\begin{remark}
We emphasize that conditions (\ref{item:construction:3i}) and (\ref{item:construction:3ii}) do not automatically imply that the aim in items (\ref{item:construction:4}) and (\ref{item:construction:5}) can be achieved, it should rather be seen as a guideline how to find potential candidates for which one might hope to be able to prove (\ref{item:construction:4}) and (\ref{item:construction:5}). 
\end{remark}

\subsection{Quadric bundles à la  Artin--Mumford  and  Colliot-Thélène--Ojanguren} \label{subsec:CTO}
The starting point here is the following result of Arason \cite{arason} and Orlov--Vishik--Voevodsky, see \cite[Theorem 2.1]{OVV}.

\begin{theorem} \label{thm:OVV}
Let $K$ be a field of characteristic 0, let $a_1,\dots ,a_n\in K^\ast$ and consider the associated symbol $\alpha=(a_1,\dots ,a_n)\in H^n(K,\mu_m^{\otimes n})$.
Consider the associated Pfister quadric $f:Q_{\alpha}\longrightarrow \Spec K$ over $K$, given by  
$$
Q_\alpha := \left\lbrace  \sum_{\epsilon\in \{0,1\}^n} (-a_1)^{\epsilon_1}(-a_2)^{\epsilon_2}\dots (-a_n)^{\epsilon_n}\cdot z_{\rho(\epsilon)}^2=0  \right\rbrace \subset  \CP^{2^n-1}_K
$$
where $\epsilon=(\epsilon_1,\dots ,\epsilon_n)\in \{0,1\}^n$, and where  $\rho:\{0,1\}^n\to \{0,1,2,\dots ,2^n-1\}$ denotes the bijection $\rho(\epsilon)=\sum_{i=0}^{n-1}\epsilon_i\cdot 2 ^i $.
Then,
$$
\ker \left(f^\ast: H^n(K,\mu_2^{\otimes n})\longrightarrow H^n(K(Q),\mu_2^{\otimes n})\right) =\left\lbrace 0, \alpha \right\rbrace . 
$$
\end{theorem}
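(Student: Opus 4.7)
The statement decomposes into two inclusions. The easy direction is $\{0,\alpha\}\subset \ker f^*$, which amounts to showing $\alpha|_{K(Q_\alpha)}=0$. The hard direction is that the kernel contains no other classes. I would present the first inclusion directly, and then import the second inclusion from the motivic machinery.

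\textbf{First inclusion.} My strategy for showing $\alpha|_{K(Q_\alpha)}=0$ is to pass through the theory of Pfister forms and use the Milnor conjecture (now a theorem of Voevodsky). The quadric $Q_\alpha$ is by construction the projectivization of the $n$-fold Pfister form $\varphi_\alpha:=\langle\!\langle a_1,\ldots,a_n\rangle\!\rangle$. Over $K(Q_\alpha)$ the form $\varphi_\alpha$ acquires an isotropic vector, and by Pfister's classical theorem an isotropic Pfister form is hyperbolic. Hence $\varphi_\alpha$ becomes hyperbolic over $K(Q_\alpha)$, so its class in the fundamental ideal $I^n(K(Q_\alpha))/I^{n+1}$ is trivial. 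Under the Milnor/Voevodsky identification $I^n/I^{n+1}\cong K^M_n/2\cong H^n(-,\mu_2^{\otimes n})$ (the Arason--Pfister--Voevodsky isomorphism), this class is precisely $\alpha$, so $\alpha|_{K(Q_\alpha)}=0$. An elementary alternative, if one prefers to avoid the full Milnor conjecture for this direction, is induction on $n$: the case $n=1$ is immediate because $K(Q_{(a_1)})$ is either $K$ or $K(\sqrt{a_1})$, and the inductive step uses the multiplicativity $\langle\!\langle a_1,\ldots,a_n\rangle\!\rangle=\langle\!\langle a_1,\ldots,a_{n-1}\rangle\!\rangle\otimes\langle 1,-a_n\rangle$ and a norm argument (Arason's original computation).

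\textbf{Second inclusion.} For the nontrivial direction that $\ker f^*\subset\{0,\alpha\}$, I would invoke the motivic machinery developed by Voevodsky and carried out by Orlov--Vishik--Voevodsky. The outline is: study the motive of $Q_\alpha$ in Voevodsky's triangulated category of motives $DM(K,\mathbb Z/2)$. Rost constructed a direct summand $M_\alpha$ of the motive of $Q_\alpha$, the so-called Rost motive, whose motivic cohomology is sensitive exactly to the symbol $\alpha$. Using the long exact sequence associated with the cofibre of $M_\alpha\to \mathbb Z/2$ in $DM$, together with Voevodsky's computation of the motivic cohomology of a point (which encodes the Milnor/Bloch--Kato conjecture in degree $\leq n$), one obtains the short exact sequence
$$
0\longrightarrow \mathbb Z/2\cdot\alpha \longrightarrow H^n(K,\mu_2^{\otimes n}) \stackrel{f^*}\longrightarrow H^n(K(Q_\alpha),\mu_2^{\otimes n}),
$$
which is exactly the desired conclusion. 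Concretely, one analyses the Rost exact sequence in Milnor $K$-theory modulo $2$ and transports it to étale cohomology via the norm residue isomorphism.

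\textbf{Main obstacle.} The first inclusion is essentially formal once Pfister's theorem and the degree $n$ case of the norm residue isomorphism are available. The genuine difficulty is entirely in the second inclusion: it rests on the full strength of Voevodsky's proof of the Milnor conjecture together with Rost's construction of the motive $M_\alpha$ and its cohomological identification. For the purposes of this survey I would therefore only sketch the first inclusion in detail and cite \cite{OVV} (and the references therein to Voevodsky's and Rost's work) for the second.
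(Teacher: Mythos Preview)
Your proposal is correct, but you should note that the paper itself does not prove this theorem at all: it is stated as a result imported from the literature, attributed to Arason \cite{arason} (for the first inclusion) and Orlov--Vishik--Voevodsky \cite[Theorem 2.1]{OVV} (for the second), with no further argument given.

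Your sketch goes beyond what the paper does, and what you write is accurate: the inclusion $\alpha\in\ker f^*$ is Arason's classical computation (and indeed follows from Pfister's hyperbolicity theorem plus the norm residue isomorphism in degree $n$, or by induction as you indicate), while the reverse inclusion is the deep input from \cite{OVV} resting on Voevodsky's proof of the Milnor conjecture and Rost's motive. In a survey context the paper's choice to simply cite the result is natural, since the motivic machinery required for the hard direction lies well outside the scope of the exposition; your more detailed outline would be appropriate if the goal were to give the reader some sense of \emph{why} the result holds, but is not what the paper opts for.
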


In the construction of Artin--Mumford \cite{artin-mumford} and Colliot-Thélène--Ojanguren \cite{CTO}, one considers fibrations $f:Y\to \CP^n_k$ whose generic fibre is (stably) birational to a  Pfister quadric $Q_\alpha$ over $K=k(\CP^n)$ with the following special property.

\begin{definition}
Let $k$ be a field of characteristic 0 and let $K=k(\CP^n)$ for some integer $n\geq 2$.
Let further $a_1,\dots ,a_{n-1},b_1,b_2\in K^\ast$ be nonzero rational functions on $\CP^n_k$ and consider the symbols
$$
\alpha_j:=(a_1,a_2,\dots ,a_{n-1},b_j)\in H^n(K,\mu_2^{\otimes n})
$$
for $j=1,2$.
Then the Pfister quadric $Q_{\alpha_1+\alpha_2}$ associated to $\alpha=\alpha_1+\alpha_2$ is called CTO- type quadric, if the following holds:
\begin{enumerate}[(1)]
\item For any geometric valuation $\nu$ on $K$ over $k$, we have $\del_\nu \alpha_j=0$ for at least one $j\in \{1,2\}$.
In other words, there is no such valuation $\nu$ such that $\alpha_1$ and $\alpha_2$ have both nontrivial residue with respect to $\nu$.
\item $\alpha_j\neq 0$ for $j=1,2$ (if $k=\overline k$, this is equivalent to asking that for each $j=1,2$, $\alpha_j$ has at least one nontrivial residue at some geometric valuation $\nu$). 
\end{enumerate}
\end{definition}

With this definition and the above theorem, it is easy to prove the following result that goes back to Colliot-Thélène--Ojanguren, see \cite[Proposition 17]{Sch-duke}.

\begin{proposition}[\cite{CTO}]
Let $k$ be a field of characteristic 0 and let $Y$ be a $k$-variety with a dominant morphism $f:Y\to \CP^n_k$, such that the generic fibre of $f$ is stably birational to a CTO-type quadric $Q_{\alpha_1+\alpha_2}$ over the function field $k(\CP^n)$.
Then 
$$
0\neq f^\ast \alpha_1=f^\ast \alpha_2\in H^n_{nr}(k(Y)/k,\mu_2^{\otimes n}) .
$$
\end{proposition}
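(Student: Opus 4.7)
The plan is to first use Theorem \ref{thm:OVV} to establish both the equality $f^\ast \alpha_1 = f^\ast \alpha_2$ and the nonvanishing of these classes, and then to verify unramifiedness valuation by valuation using condition (1) of the CTO-type definition together with the compatibility diagram (\ref{diag:del-pullbacks}). Set $K := k(\CP^n_k)$ and $Q := Q_{\alpha_1+\alpha_2}$. By hypothesis the generic fibre of $f$ is stably birational over $K$ to $Q$, so there exists a common purely transcendental extension $L/K$ of both $k(Y)$ and $K(Q)$. Iterated Faddeev exact sequences (cf.\ \cite[Theorem 6.9.1]{GS}) make the two pullback maps $H^n(K(Q),\mu_2^{\otimes n}) \to H^n(L,\mu_2^{\otimes n})$ and $H^n(k(Y),\mu_2^{\otimes n}) \to H^n(L,\mu_2^{\otimes n})$ injective.

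By Theorem \ref{thm:OVV}, $\alpha := \alpha_1+\alpha_2$ vanishes in $H^n(K(Q),\mu_2^{\otimes n})$, hence in $H^n(L,\mu_2^{\otimes n})$, and so $f^\ast \alpha = 0$ in $H^n(k(Y),\mu_2^{\otimes n})$ by the injectivity of $H^n(k(Y)) \to H^n(L)$. Since the coefficients are $2$-torsion, this yields $f^\ast \alpha_1 = f^\ast \alpha_2$. For the nonvanishing, suppose $f^\ast \alpha_1 = 0$; then $\alpha_1$ maps to zero in $H^n(L,\mu_2^{\otimes n})$ and, by injectivity of $H^n(K(Q)) \to H^n(L)$, also in $H^n(K(Q),\mu_2^{\otimes n})$. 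Theorem \ref{thm:OVV} then forces $\alpha_1 \in \{0,\alpha\}$, and either case violates condition (2) of the CTO-type definition (yielding $\alpha_1 = 0$ or $\alpha_2 = \alpha - \alpha_1 = 0$), so $f^\ast \alpha_1 \neq 0$.

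To check unramifiedness, let $\nu$ be a geometric valuation on $k(Y)/k$ and set $\bar\nu := \nu|_K$. If $\bar\nu$ is trivial, then the valuation ring $A_\nu$ contains $K$, so $f^\ast \alpha_1$ lies in the image of $H^n(K,\mu_2^{\otimes n}) \to H^n(A_\nu,\mu_2^{\otimes n}) \to H^n(k(Y),\mu_2^{\otimes n})$, and (\ref{eq:les:residue}) gives $\del_\nu f^\ast \alpha_1 = 0$. If $\bar\nu$ is nontrivial and geometric on $K/k$, condition (1) of the CTO-type definition produces $j \in \{1,2\}$ with $\del_{\bar\nu}\alpha_j = 0$; since $f^\ast \alpha_1 = f^\ast \alpha_j$, the commutative diagram (\ref{diag:del-pullbacks}) yields $\del_\nu f^\ast \alpha_1 = e \cdot g^\ast \del_{\bar\nu}\alpha_j = 0$, where $e$ is the ramification index of $A_\nu/A_{\bar\nu}$. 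Finally, if $\bar\nu$ is nontrivial but not geometric, then $\operatorname{trdeg}_k \kappa_{\bar\nu} \leq n-2$, and (assuming $k$ is algebraically closed, as in the standard CTO setup) (\ref{eq:coho-dimension}) applied to finitely generated subfields of $\kappa_{\bar\nu}$ forces $H^{n-1}(\kappa_{\bar\nu},\mu_2^{\otimes n-1}) = 0$; hence $\del_{\bar\nu}\alpha_1 = 0$, and again (\ref{diag:del-pullbacks}) yields $\del_\nu f^\ast \alpha_1 = 0$.

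The main obstacle I expect is the third case in the unramifiedness argument: valuations $\nu$ on $k(Y)$ whose restriction to $K$ is nontrivial but not geometric. The cohomological dimension bound on $\kappa_{\bar\nu}$ is clean when $k$ is algebraically closed, which suffices for the intended applications; absent that, one would have to either dominate $\bar\nu$ by a geometric valuation via a suitable blow-up of $\CP^n_k$ (along which both $\alpha_1$ and $\alpha_2$ have controlled ramification) or strengthen condition (1) to cover divisorial valuations with smaller residue field. A secondary point worth verifying is that the stable birational equivalence over $K$ really supplies the common purely transcendental extension $L$ used in Steps 1--2; this is immediate from the definition of stable birationality but should be stated explicitly at the outset.
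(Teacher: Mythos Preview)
Your proof is correct and follows essentially the same route as the paper: use Theorem~\ref{thm:OVV} for the equality and nonvanishing, then check residues via~(\ref{diag:del-pullbacks}) and condition~(1) of the CTO definition. Your explicit reduction via a common purely transcendental extension $L$ of $k(Y)$ and $K(Q)$ is a useful elaboration; the paper applies Theorem~\ref{thm:OVV} to $f^\ast$ directly, tacitly using exactly this.

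The one point where you diverge is your ``third case'' ($\bar\nu$ nontrivial but not geometric), which you flag as the main obstacle and patch by assuming $k$ algebraically closed. In fact this case is vacuous, which is why the paper's dichotomy (trivial versus geometric) is complete. The reason is Abhyankar's inequality. With $L=k(Y)$, $K=k(\CP^n)$, $\nu$ geometric on $L/k$ and $\bar\nu=\nu|_K$ nontrivial, the value groups satisfy $\Gamma_{\bar\nu}\subset\Gamma_\nu\cong\Z$ with finite index, so applying Abhyankar to $L/K$ gives
\[
\operatorname{trdeg}(\kappa_\nu/\kappa_{\bar\nu})\leq \operatorname{trdeg}(L/K)=\dim Y-n,
\]
hence $\operatorname{trdeg}_k(\kappa_{\bar\nu})\geq (\dim Y-1)-(\dim Y-n)=n-1$. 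On the other hand, Abhyankar for $K/k$ gives $\operatorname{trdeg}_k(\kappa_{\bar\nu})\leq n-1$ since $\bar\nu$ has rational rank~$1$. Thus $\operatorname{trdeg}_k(\kappa_{\bar\nu})=n-1$ and $\bar\nu$ is geometric. You may therefore delete the third case and the algebraically-closed caveat entirely; the proposition holds over any field of characteristic~$0$ as stated.
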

\begin{proof}
By Theorem \ref{thm:OVV}, $f^\ast (\alpha_1+\alpha_2)=0$ and so $f^\ast \alpha_1=f^\ast \alpha_2$ (as we work with mod $2$ coefficients).
Moreover, $f^\ast \alpha_1=0$ would by Theorem \ref{thm:OVV} imply $\alpha_1=0$ or $\alpha_1=\alpha_1+\alpha_2$ (hence $\alpha_2=0$), which contradicts the assumption $\alpha_j\neq 0$ for $j=1,2$.
Hence,
$$
0\neq f^\ast \alpha_1=f^\ast \alpha_2\in H^n (k(Y) ,\mu_2^{\otimes n})
$$
and it suffices to show that this class is unramified over $k$.
For this, let $\nu$ be a geometric  valuation of $k(Y)$ over $k$ and consider the restriction $\mu:=\nu|_K$ of $\nu$ to $ k(\CP^n)$.
If $\mu$ is trivial, then $\del_\nu f^\ast \alpha_j=0$ is clear.
Otherwise, $\mu$ is a geometric valuation on $k(\CP^n)$ over $k$.
By the definition of CTO-type quadrics, there is some $j\in \{1,2\}$ with
$
\del_{\mu}\alpha_j=0
$
and so $\del_\nu f^\ast \alpha_j=0$ follows from the diagram (\ref{diag:del-pullbacks}).
This concludes the proof of the proposition.
\end{proof}

The main difficulty in this approach is the construction of CTO-type quadrics.
The example of Artin--Mumford \cite{artin-mumford} is a conic that is stably birational to a CTO-type quadric surface over $k(\CP^2)$.
CTO-type quadrics over $k(\CP^3)$ have been constructed by Colliot-Thélène--Ojanguren in \cite{CTO} and the general case of CTO-type quadrics over $k(\CP^n)$ for arbitrary $n\geq 2$ was established in \cite[Section 6]{Sch-duke}.

An algebraic variant of this construction which leads to fibrations over rational bases whose generic fibres are products of certain Pfister quadrics was established by Peyre \cite{peyre} and Asok \cite{asok}.
This approach allows generalizations to $\mu_\ell$-coefficients for any prime $\ell$, but it still relies heavily on Theorem \ref{thm:OVV} (and its analogue for other Norm varieties associated to symbols with $\mu_\ell$-coefficients).


\subsection{The quadric surface bundle of Hassett--Pirutka--Tschinkel} \label{subsec:HPT}

For any smooth quadric surface $Q\subset \CP^3_K$ over a field $K$, the kernel of the pullback map
$$
H^2(K,\mu_2^{\otimes 2}) \longrightarrow H^2(K(Q),\mu_2^{\otimes 2})
$$
is completely described by Arason in \cite{arason}, see e.g.\ \cite[Theorem 3.10]{Pirutka}.
Moreover, one can use the Hochschild--Serre spectral sequence to show that the image of the above map is given by $H^2_{nr}(K(Q)/K,\mu_2^{\otimes 2})$.
In \cite[Theorem 3.17]{Pirutka}, Pirutka uses these ingredients to give a general formula for the unramified cohomology $H^2_{nr}(\C(Y)/\C,\mu_2^{\otimes 2})$,  where $Y$ is a variety over $\C$ that admits a fibration $f:Y\to \CP^2_\C$ whose generic fibre is a  smooth quadric surface over $\C(\CP^2)$.
This general formula has the following beautiful consequence, see \cite[Proposition 11]{HPT}.

\begin{proposition}[\cite{HPT}] \label{prop:HPT}
Let $g:=x_0^2+x_1^2+x_2^2-2(x_0x_1+x_0x_2+x_1x_2)$ be the equation of the conic in $\CP^2_{\C}$ that is tangent to the coordinate lines $x_i=0$ for $i=0,1,2$ and consider the bidegree $(2,2)$ hypersurface $Y\subset \CP^2_\C\times \CP^3_\C$, given by the equation
$$
Y:=\{ g(x_0,x_1,x_2) \cdot z_0^2+x_0x_1\cdot z_1^2+x_0x_2\cdot z_2^2+x_1x_2\cdot z_3^2=0\} \subset \CP^2_\C \times \CP^3_\C
.$$
If $f:Y\to \CP^2_\C$ denotes the natural morphism that is induced by the first projection, then
$$
0\neq f^\ast \left( \frac{x_1}{x_0},\frac{x_2}{x_0} \right) \in H^2_{nr}(\C(Y)/\C,\mu_2^{\otimes 2}) .
$$
\end{proposition}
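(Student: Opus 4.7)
The plan is to verify separately that $f^*\alpha$ is unramified over $\C$ and that $f^*\alpha$ is nonzero in $H^2(\C(Y),\mu_2^{\otimes 2})$, following the general five-step scheme laid out at the start of Section \ref{sec:examples} with $S = \CP^2_\C$ and $\alpha = (u,v)$, where $u = x_1/x_0$ and $v = x_2/x_0$. As a preliminary step I would compute the ramification divisor of $\alpha$ on $\CP^2_\C$ by using Lemmas \ref{lem:residue} and \ref{lem:del=val}, together with bilinearity of the cup product and the fact that $-1 \in (\C^*)^2$: a short calculation gives that $\del_D \alpha$ is nonzero exactly when $D$ is one of the coordinate lines $\{x_0 = 0\}$, $\{x_1 = 0\}$, $\{x_2 = 0\}$.

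To prove unramifiedness of $f^*\alpha$, I would combine Proposition \ref{prop:smooth-model} (reduction to residues on a smooth proper model) with the compatibility of residues with pullback given by diagram (\ref{diag:del-pullbacks}): only divisors $E$ on a smooth proper model $\widetilde Y \to Y$ lying above one of the three coordinate lines in $\CP^2_\C$ can possibly carry a nonzero residue of $f^*\alpha$. The key geometric input is the tangency of $\{g = 0\}$ to each coordinate axis, which forces each $g|_{x_i = 0}$ to be a perfect square; explicitly $g|_{x_0 = 0} = (x_1 - x_2)^2$. Consequently above $\{x_0 = 0\}$ the quadric bundle degenerates to $(x_1 - x_2)^2 z_0^2 + x_1 x_2 z_3^2 = 0$, which becomes isotropic after the unramified Kummer extension $\kappa(\{x_0 = 0\})(\sqrt{x_1 x_2})$. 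A local residue computation via Lemmas \ref{lem:residue} and \ref{lem:del=val} on an alteration of $\widetilde Y$ then shows that $f^*\alpha$ has trivial residue at any such $E$, and the same argument applies symmetrically above $\{x_1 = 0\}$ and $\{x_2 = 0\}$.

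For nontriviality $f^*\alpha \neq 0$, I would invoke Arason's description \cite{arason} of the kernel of $f^*: H^2(K, \mu_2^{\otimes 2}) \to H^2(K(Q), \mu_2^{\otimes 2})$, with $K = \C(\CP^2)$ and $Q$ the generic fibre of $f$, defined by the quadratic form $q = \langle g, x_0 x_1, x_0 x_2, x_1 x_2 \rangle$. Since the discriminant of $q$ equals $g$ modulo squares and $\{g = 0\}$ is an irreducible conic, the discriminant is nontrivial in $K^*/(K^*)^2$, so Arason's theorem describes the kernel explicitly in terms of the square class of $g$ and the Hasse--Witt invariant $c(q)$. A direct comparison of square classes then shows that $(u,v)$ does not lie in this kernel, whence $f^*\alpha \neq 0$. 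The hardest part of the argument is the residue analysis above the coordinate lines: one must simultaneously use the tangency of $g$ to the axes (which kills residues locally via isotropy of the quadric bundle) and the global nontriviality supplied by Arason's theorem, a delicate balance that the specific shape of $g$ is precisely engineered to provide.
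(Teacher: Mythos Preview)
The paper does not give a self-contained proof of this proposition; it refers to \cite[Proposition 11]{HPT}, explaining just before the statement that the result is a consequence of Pirutka's general formula \cite[Theorem 3.17]{Pirutka} for $H^2_{nr}(\C(Y)/\C,\mu_2^{\otimes 2})$ when $Y\to\CP^2_\C$ is a quadric surface bundle. That formula packages both the unramifiedness and the nontriviality into a single criterion on the base, using Arason's kernel computation together with the Hochschild--Serre spectral sequence, and one simply checks its hypotheses for this explicit bundle. Your approach --- a direct residue computation on a resolution for unramifiedness, plus Arason for nontriviality --- is a legitimate alternative, closer in spirit to the paper's own proof of the generalization in Theorem \ref{thm:examples}.

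There is, however, a genuine gap in your unramifiedness argument. Your sketch treats only divisors $E\subset\widetilde Y$ that \emph{dominate} one of the coordinate lines, by analysing the generic fibre of $f$ over that line. But a prime divisor $E$ on a resolution $\widetilde Y\to Y$ may instead contract to a closed point $p$ of $\CP^2_\C$ lying on the coordinate triangle --- for instance one of the three vertices, or a tangency point of $\{g=0\}$ with an axis, all of which sit under the singular locus of $Y$. For such $E$ the restriction of $\nu_E$ to $\C(\CP^2)$ is no longer the divisorial valuation of a coordinate line, diagram (\ref{diag:del-pullbacks}) does not reduce you to a residue of $\alpha$ on $\CP^2$, and your degenerate-fibre analysis does not apply. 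In the proof of Theorem \ref{thm:examples} this is precisely what Case 2 handles, via the transversality of $\{g=0\}$ to the coordinate strata and the bound (\ref{eq:coho-dimension}) on cohomological dimension; you would need an analogous step here. Even in the case you do treat, the link between ``the degenerate quadric becomes isotropic over $\kappa(\{x_0=0\})(\sqrt{x_1x_2})$'' and ``$\del_E f^\ast\alpha=0$'' is not made: one should compute $\del_{\{x_0=0\}}\alpha=(x_1/x_2)\in H^1(\C(\CP^1),\mu_2)$ and then show that its pullback to $\kappa(E)$ vanishes, i.e.\ that $x_1/x_2$ becomes a square in $\kappa(E)$ or that the ramification index is even --- the phrase ``unramified Kummer extension'' does not by itself supply this.
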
 

Even though the above result is formulated over $\C$,  the proof remains valid over any algebraically closed field of characteristic different from $2$.

One of the main differences between the examples in Section \ref{subsec:CTO} and \ref{subsec:HPT} is that the generic fibre of the quadric surface bundle of Hassett--Pirutka--Tschinkel in Proposition \ref{prop:HPT} is not (stably birational to) a Pfister quadric.
On the other hand, a common feature of both results is that they rely on the fact that the kernel of the pullback map
$$
H^n(K,\mu_2^{\otimes n})\longrightarrow H^n(K(Q),\mu_2^{\otimes n})
$$
is known in both cases: when $Q$ is a Pfister quadric or an arbitrary quadric surface.  
The kernel of the above map is not known for general quadrics, which yields a nontrivial obstacle when trying to generalize the result of Hassett--Pirutka--Tschinkel from Proposition \ref{prop:HPT} to higher dimensions.

\subsection{Generalization} \label{subsec:Sch}
The following generalization of Proposition \ref{prop:HPT} was discovered in \cite{Sch-JAMS} and \cite{Sch-torsion}.

\begin{theorem} \label{thm:examples}
Let $k$ be an algebraically closed field and let $m$ be a positive integer that is invertible in $k$.
Assume that there is an element $t\in k$ which is transcendental over the prime field of $k$.\footnote{If $k$ has characteristic 0, then $t$ may also be chosen to be a prime number that is coprime to $m$}
For $d:=m\cdot \lceil\frac{n+1}{m}\rceil$, consider the polynomial
$$
g:=t\cdot \left(  \sum_{i=0}^n x_i\right) ^d+x_0^{d-n}x_1x_2\dots x_n \in k[x_0,x_1,\dots ,x_{n}]
$$
and the bidegree $(d,m)$ hypersurface $Y\subset \CP^n\times \CP^{2^n-1}$, given by
$$
Y:= \left\lbrace  g(x_0,x_1,\dots ,x_n) \cdot z_0^m+\sum_{\substack{\epsilon\in \{0,1\}^n\\ \epsilon\neq 0}} x_0^{d-\sum_{i=1}^n \epsilon_i}x_1^{\epsilon_1}x_2^{\epsilon_2}\cdots x_n^{\epsilon_n}\cdot z_{\rho(\epsilon)}^m =0 \right\rbrace  \subset \CP^n_k \times \CP^{2^n-1}_k ,
$$
where  $\rho:\{0,1\}^n\to \{0,1,2,\dots ,2^n-1\}$ denotes the bijection $\rho(\epsilon)=\sum_{i=0}^{n-1}\epsilon_i\cdot 2 ^i $.
If $f:Y\to \CP_k^{n}$ denotes the morphism induced by the first projection, then the class
\begin{align} \label{eq:f*alpha}
0\neq f^\ast \left(\frac{x_1}{x_0},\frac{x_2}{x_0},\dots ,\frac{x_n}{x_0} \right)\in H^n_{nr}(k(Y)/k,\mu_m^{\otimes n}) 
\end{align}
has order $m$ and is unramified over $k$. 
\end{theorem}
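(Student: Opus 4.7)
The goal is to verify two properties of the class $\xi := f^\ast \alpha \in H^n(k(Y),\mu_m^{\otimes n})$, where $\alpha := (x_1/x_0,\dots,x_n/x_0)$: namely, that $\xi$ is unramified over $k$, and that $\xi$ has order exactly $m$ (in particular, is nonzero). My plan follows the two-step strategy outlined at the start of Section \ref{sec:examples} and developed in \cite{Sch-JAMS,Sch-torsion}: prove unramifiedness via local residue computations at the ramification divisors of $\alpha$, and prove the precise order of $\xi$ via a degeneration in the transcendental parameter $t$.

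For the unramifiedness, I would apply Proposition \ref{prop:smooth-model} to reduce to checking vanishing of residues of $\xi$ along prime divisors on a smooth proper model of $Y$. Since $\alpha$ ramifies only along the coordinate hyperplanes $D_i := \{x_i = 0\} \subset \CP^n_k$, the pullback compatibility in (\ref{diag:del-pullbacks}) immediately handles prime divisors on $Y$ whose image in $\CP^n_k$ is not contained in any $D_i$. For a divisor $E$ mapping dominantly to some $D_i$, the equation defining $Y$ has been engineered so that after setting $x_i = 0$ one can identify a subset of the surviving monomials that combine into a smooth $\kappa(E)$-rational solution; a Hensel lift of this solution produces an \'etale-local section of $f$ over the generic point of $D_i$, along which the coordinate $x_i/x_0$ becomes an $m$-th power. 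The pullback of the symbol $\alpha$ along this section is therefore $0$ (cf.\ Theorem \ref{thm:OVV} and the discussion in Section \ref{subsec:CTO}), and combining this with (\ref{diag:del-pullbacks}) and Lemma \ref{lem:residue} forces $\partial_E \xi = 0$. To avoid an explicit resolution of $Y$, I would invoke Theorem \ref{thm:specialization-schreieder}, whose condition (\ref{cond:vanishing}) on the singular locus is verified by the same local Hensel-lift argument, since $Y^{\sing}$ projects into the coordinate hyperplanes.

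For the nonvanishing and the order computation, I would use the transcendental parameter $t$ to produce a flat one-parameter degeneration of $f:Y\to \CP^n_k$ with uniformiser $t$. At $t = 0$, the polynomial $g$ collapses to the single monomial $x_0^{d-n}x_1\cdots x_n$, which corresponds to the Pfister index $\epsilon = (1,\dots,1)$; the special fibre $Y_0$ then admits an explicit $k(\CP^n)$-rational point $z_0 = 1$, $z_{\rho(\mathbf 1)} = \zeta$ (with $\zeta^m = -1$), and all other $z_{\rho(\epsilon)}$ equal to $0$. A direct partial-derivative check shows that this point lies in the smooth locus of the generic fibre of $Y_0 \to \CP^n_k$, using that $m$ is invertible in $k$ and that the $x_j$ are all units at the generic point. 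By the degeneration criterion for nonvanishing of $f^{\ast}\alpha$ mentioned in the remark after Theorem \ref{thm:examples} (developed in \cite{Sch-JAMS,Sch-torsion}), this forces $\xi \neq 0$. Running the identical argument with $\mu_m$ replaced by $\mu_\ell$ for each proper divisor $\ell\mid m$ shows that the reduction of $\xi$ mod $\ell$ is still nonzero, whence $\xi$ has order exactly $m$.

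The main obstacle, I expect, will be the unramifiedness step: the local residue computation along divisors of $Y$ mapping to a coordinate hyperplane $D_i$ requires a careful combinatorial analysis of which monomials survive after the substitution $x_i = 0$, and a Hensel lift exhibiting a smooth section witnessing triviality of $f^{\ast}\alpha$ on the \'etale local fibre. Closely related in difficulty is verifying condition (\ref{cond:vanishing}) of Theorem \ref{thm:specialization-schreieder}, which requires a sufficiently uniform description of $Y^{\sing}$ and a local section argument there that bypasses any explicit resolution; this flexibility is precisely what makes the method robust enough to handle arbitrary $n$ and arbitrary $m$.
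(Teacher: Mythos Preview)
Your degeneration argument for nonvanishing is essentially the paper's, but the final step for the exact order is flawed: showing that the image of $\xi$ in $H^n(k(Y),\mu_\ell^{\otimes n})$ is nonzero for each proper divisor $\ell \mid m$ does \emph{not} force $\xi$ to have order $m$ (for instance, the class of $-1$ in $H^1(\Q,\mu_4)\cong \Q^\ast/(\Q^\ast)^4$ has order $2$ yet nonzero image in $H^1(\Q,\mu_2)$). The paper instead assumes $e\xi = 0$ for some $1 \le e < m$, specialises this relation at $t=0$, and restricts to the smooth rational point you found; since the composition of $f_0^\ast$ with restriction to that point is the identity on $H^n(F(\CP^n),\mu_m^{\otimes n})$, one obtains $e\alpha = 0$, which is false by an iterated residue computation along the coordinate hyperplanes.

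Your unramifiedness argument has more serious gaps. First, Theorem~\ref{thm:specialization-schreieder} is not a tool for proving that a class is unramified: it takes an already-unramified class on a special fibre as \emph{input} and obstructs decompositions of the diagonal on the generic fibre, so invoking it here is circular. Second, your case split only treats divisors $E$ whose image in $\CP^n_k$ is either disjoint from $\bigcup D_i$ or dominates some $D_i$; you never address divisors mapping to a proper closed subset of some $D_i$, in particular those landing in $\{g=0\}\cap D_i$. Third, an \'etale-local section $\sigma$ of $f$ gives $\sigma^\ast f^\ast\alpha=\alpha$, so sections do not produce the vanishing you claim. The paper's argument is different and hinges on two points you do not mention. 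When $f'(x)\notin\{g=0\}$, the key is that $g$ restricts to an $m$-th power on each $D_i$ (since $g|_{D_i}=t\cdot(\sum_{j\neq i}x_j)^d$ with $m\mid d$ and $t$ an $m$-th power in the algebraically closed field $k$), so $g$ becomes an $m$-th power in $\widehat{\mathcal O_{Y',x}}$ by Hensel; absorbing this into $z_0^m$ shows that $k(Y)(g^{1/m})$, and hence $\Frac\widehat{\mathcal O_{Y',x}}$, contains the function field of the Fermat--Pfister hypersurface $F_\alpha$, and Lemma~\ref{lem:Fermat-Pfister:vanishing} (not Theorem~\ref{thm:OVV}, which is the $m=2$ case) then kills $f^\ast\alpha$ there. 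When $f'(x)\in\{g=0\}$, the transversality of $\{g=0\}$ with the coordinate stratification forces $\codim_{\CP^n_k}(f'(x))$ to strictly exceed the number $c$ of coordinates vanishing at $f'(x)$; Lemma~\ref{lem:residue} together with (\ref{eq:coho-dimension}) then shows that $\del_\mu\alpha$ factors through $H^{n-c}(\kappa(f'(x)),\mu_m^{\otimes(n-c)})=0$.
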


A formal analogy between the example in the above theorem and that in Proposition \ref{prop:HPT} is as follows: the equation $g$ in Proposition \ref{prop:HPT} defines a conic that is tangent to the three coordinate lines in $\CP^2_k$ and so $g$ restricts to squares on the coordinate lines. Similarly, the equation $g$ in Theorem \ref{thm:examples} restricts to a $d$-th power (and hence to an $m$-th power because $m\mid d$) on each coordinate hypersurface $\{x_i=0\}\subset \CP^n_k$.

The starting point of Theorem \ref{thm:examples} is the following result, which generalizes one part of Theorem \ref{thm:OVV} from $m=2$ to arbitrary $m\geq 2$, see \cite[Corollary 4.2]{Sch-torsion}.

\begin{lemma} \label{lem:Fermat-Pfister:vanishing}
Let $K$ be a field and let $m$ be a positive integer that is invertible in $K$.
Let $a_1,\dots ,a_n\in K^\ast$ be invertible elements with associated symbol $\alpha=(a_1,\dots ,a_n)\in H^n(K,\mu_m^{\otimes n})$.
Consider further the hypersurface
$$
F_\alpha:=\left\lbrace  \sum_{ \epsilon\in \{0,1\}^n }  (-a_1)^{\epsilon_1}(-a_2)^{\epsilon_2}\cdots (-a_n)^{\epsilon_n}\cdot z_{\rho(\epsilon)}^m =0 \right\rbrace  \subset  \CP^{2^n-1}_K
$$
with structure morphism $f:F_{\alpha}\to \Spec K$.
Then
$$
f^\ast \alpha=0\in H^n(K(F_\alpha),\mu_m^{\otimes n}) .
$$ 
\end{lemma}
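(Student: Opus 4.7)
The plan is to exploit the block structure of the defining equation of $F_\alpha$ and induct on $n$. Grouping terms by the value of the last index $\epsilon_n \in \{0,1\}$, the equation of $F_\alpha$ can be rewritten as
$$\Phi'(u) - a_n \Phi'(v) = 0,$$
where $u := (z_{\rho(\epsilon', 0)})_{\epsilon' \in \{0,1\}^{n-1}}$, $v := (z_{\rho(\epsilon', 1)})_{\epsilon' \in \{0,1\}^{n-1}}$, and
$$\Phi'(w) := \sum_{\epsilon' \in \{0,1\}^{n-1}} \prod_{i=1}^{n-1}(-a_i)^{\epsilon_i'} w_{\epsilon'}^m$$
is the analogous polynomial attached to $\alpha' := (a_1, \dots, a_{n-1})$. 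Since $\Phi'(v)$ is a nonzero function on $F_\alpha$, we obtain $a_n = \Phi'(u)/\Phi'(v)$ in $K(F_\alpha)^\ast$, so by bilinearity of the cup product
$$f^\ast \alpha = (a_1, \dots, a_{n-1}, \Phi'(u)) - (a_1, \dots, a_{n-1}, \Phi'(v)) \in H^n(K(F_\alpha), \mu_m^{\otimes n}).$$
The lemma therefore reduces to the following general claim: for any field $L \supset K$, any $b_1, \dots, b_{n-1} \in L^\ast$, and any tuple $w$ with $\Phi(w) \neq 0$, where $\Phi$ is the analogous norm-type form built from $(b_1, \dots, b_{n-1})$, one has $(b_1, \dots, b_{n-1}, \Phi(w)) = 0$ in $H^n(L, \mu_m^{\otimes n})$.

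I would prove this claim by induction on $n-1$. The base case $n-1 = 0$ is trivial, since then $\Phi(w) = w^m$ is an $m$-th power and has vanishing class in $H^1(L, \mu_m)$. For the inductive step, decompose $c := \Phi(w) = \Psi(w') - b_{n-1}\Psi(w'')$, where $\Psi$ denotes the analogous form for $(b_1, \dots, b_{n-2})$. Assume first $\Psi(w') \neq 0$ and set $t := b_{n-1}\Psi(w'')/\Psi(w')$, so that $c = \Psi(w')(1-t)$ and
$$(b_1, \dots, b_{n-1}, c) = (b_1, \dots, b_{n-1}, \Psi(w')) + (b_1, \dots, b_{n-1}, 1-t).$$
The first summand vanishes: the induction hypothesis gives $(b_1, \dots, b_{n-2}, \Psi(w')) = 0$, and by graded commutativity of the cup product this forces $(b_1, \dots, b_{n-1}, \Psi(w')) = 0$ as well. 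For the second summand, the Steinberg relation $(t, 1-t) = 0$ combined with $(t) = (b_{n-1}) + (\Psi(w'')) - (\Psi(w'))$ yields $(b_{n-1}, 1-t) = -(\Psi(w''), 1-t) + (\Psi(w'), 1-t)$, whence
$$(b_1, \dots, b_{n-1}, 1-t) = -(b_1, \dots, b_{n-2}, \Psi(w''), 1-t) + (b_1, \dots, b_{n-2}, \Psi(w'), 1-t)=0$$
by the induction hypothesis. The degenerate case $\Psi(w') = 0$ reduces to $c = -b_{n-1}\Psi(w'')$ and is handled by the induction hypothesis together with the standard identity $(b_{n-1}, -b_{n-1}) = 0$.

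The principal obstacle is careful sign-bookkeeping under the graded commutativity of the cup product, since several reorderings of the symbols are required to put them in the shape on which the induction hypothesis applies. A secondary but essential ingredient is the availability in étale cohomology of the Steinberg relation $(a, 1-a) = 0$ (and its consequence $(a, -a) = 0$); these are the standard transcriptions of the defining relations of Milnor K-theory to mod $m$ Galois cohomology, and hold in our setting via the Kummer sequence (\ref{seq:Kummer}) and Hilbert 90.
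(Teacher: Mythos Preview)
Your proof is correct. The paper does not actually give its own proof of this lemma; it merely cites \cite[Corollary 4.2]{Sch-torsion} and uses the statement as a black box in the proof of Theorem~\ref{thm:examples}. Your argument supplies a self-contained proof: it is the classical Pfister multiplicativity argument (the value set of an $n$-fold Pfister-type form is killed by the associated symbol), transported from Milnor $K$-theory to \'etale cohomology via the Steinberg relation and the identity $(a,-a)=0$. The induction is clean and the case splits are handled correctly; the only minor point one might add for completeness is an explicit word on why $\Phi'(v)$ is not identically zero on $F_\alpha$ (for $n\geq 2$ the Fermat hypersurface is geometrically integral, so $\{\Phi'(u)=\Phi'(v)=0\}$ has strictly smaller dimension), but this is routine.
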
 

\begin{proof}[Proof of Theorem \ref{thm:examples}]
The case $m=2$ follows from \cite[Propositions 5.1 and 6.1]{Sch-JAMS}, and the general case of arbitrary $m\geq 2$ follows from  \cite[Proposition 4.1]{Sch-torsion} and \cite[Theorem 5.3]{Sch-torsion}.
We give (almost all) the details of the argument in what follows.

Let
$$
\alpha:= \left(\frac{x_1}{x_0},\frac{x_2}{x_0},\dots ,\frac{x_n}{x_0} \right)\in H^n(k(\CP^n),\mu_m^{\otimes n}).
$$
Then we need to show the following two properties:  
\begin{enumerate}[(a)]
\item $f^\ast \alpha$ is unramified over $k$;\label{item:unramified}
\item  $f^\ast \alpha$  has order $m$.\label{item:order-m}
\end{enumerate}
Note that these properties are opposing to each other, as (\ref{item:unramified}) amounts to a vanishing result (all residues of the class in (\ref{eq:f*alpha})  vanish), while (\ref{item:order-m}) amounts to a non-vanishing result.  

To prove (\ref{item:order-m}), let $F\subset k$ be the algebraic closure of the prime field of $k$.
Then $Y$ is defined over $F[t]$ and so we can consider its degeneration $Y_0$ modulo $t$, which is a hypersurface $Y_0\subset \CP^n_F\times \CP^{2^{n}-1}_F$ with projection $f_0:Y_0\to \CP^n_F$.
For a contradiction, assume that there is an integer $e\in \{1,2,\dots ,m-1\}$ with $e\cdot f^\ast \alpha=0$.
It is not hard to show that this implies the following for the specialization where $t=0$:
$$
e\cdot f_0^\ast \alpha=0\in H^n(F(Y_0) ,\mu_m^{\otimes n}) .
$$
(Here by slight abuse of notation, we use that $\alpha\in H^n(F(\CP^n),\mu_m^{\otimes n})$.)
However, our construction implies that the generic fibre of $f_0:Y_0\to \CP^n_F$ has a $F$-rational point $y_0\in Y_0$ in its smooth locus. The class $e\cdot f_0^\ast \alpha$ can be restricted to this point and so
$$
e\cdot f_0^\ast \alpha|_{y_0}=0\in H^n(\kappa(y_0) ,\mu_m^{\otimes n}).
$$
On the other hand, $\kappa(y_0)\cong F(\CP^n)$ and the composition
$$
H^n(F(\CP^n),\mu_m^{\otimes n})\longrightarrow H^n(\kappa(y_0) ,\mu_m^{\otimes n})\cong H^n(F(\CP^n),\mu_m^{\otimes n})
$$
given by pullback via $f_0$ and restriction to $y_0$ is the identity.
Hence,
$$
e\cdot \alpha=0\in H^n(F(\CP^n) ,\mu_m^{\otimes n}) ,
$$
which is false as one may check by induction on $n$ by taking residues along $x_{n}=0$.
This proves (\ref{item:order-m}).

To prove (\ref{item:unramified}), let $x\in Y'$ be a codimension 1 point of a normal birational model of $Y$.
We may assume that there is a birational morphism $Y'\to Y$ and so $f$ induces a morphism $f':Y'\to \CP^n$.
We then need to show that $\del_x ({f'}^\ast\alpha)=0$, where
$$
\alpha:= \left(\frac{x_1}{x_0},\frac{x_2}{x_0},\dots ,\frac{x_n}{x_0} \right)\in H^n(k(\CP^n),\mu_m^{\otimes n}).
$$
This vanishing is obvious, unless $f'(x)$ is contained in the union of hyperplanes $\{x_0x_1\dots x_n=0\}\subset \CP^n_k$, which is the ramification locus of $\alpha$ on $\CP^n_k$.
It thus suffices to deal with the case where $f'(x)\in \{x_0x_1\dots x_n=0\}$.
The proof then splits up into two cases, as follows.

\textbf{Case 1.} $f'(x)\notin \{g=0\}$.

In this case, $g$ is a nontrivial $m$-th power in the residue field of the local ring $\mathcal O_{Y',x}$ and so it becomes an $m$-th power in the completion $\widehat{\mathcal O_{Y',x}}$.
The residue $\del_x:H^n(k(Y),\mu_m^{\otimes n})\to H^{n-1}(\kappa(x),\mu_m^{\otimes n-1})$ factors through 
$$
H^n(k(Y),\mu_m^{\otimes n})\longrightarrow H^n(\Frac \widehat{\mathcal O_{Y',x}},\mu_m^{\otimes n})
$$
and the image of $f^\ast \alpha$ via the above map vanishes by Lemma \ref{lem:Fermat-Pfister:vanishing}, so that $\del_x ({f'}^\ast\alpha)=0$ follows, as we want.

\textbf{Case 2.}  $f'(x)\in \{g=0\}$.

The main point about this case is that ${g=0}$ meets each strata of the union of the hyperplanes ${x_i=0}$ dimensionally transversely.
In particular, $f'(x)\in \{g=0\}$ implies that the number $c$ of coordinate functions $x_i$ that vanish at $f'(x)$ is strictly smaller than the codimension of $f'(x)$ in $\CP^n_k$:
$$
c< \codim_{\CP^n_k}(f'(x)).
$$
It then follows from Lemma \ref{lem:residue} that the valuation $\mu$ on $k(\CP^n)$ that is induced by restricting the valuation on $k(Y)$ that is given by the codimension 1 point $x\in Y'$ satisfies
$$
\del_\mu\alpha=\alpha'\cup \beta ,
$$
for some $\beta \in H^{c-1}(\kappa(\mu),\mu_m^{\otimes c-1})$ and $\alpha'\in H^{n-c}(\kappa(\mu),\mu_m^{\otimes n-c})$ with
$$
\alpha'\in \im(H^{n-c}(\kappa(f'(x)),\mu_m^{\otimes n-c})\longrightarrow H^{n-c}(\kappa(\mu),\mu_m^{\otimes n-c})) .
$$
Since $k$ is algebraically closed, $\kappa(f'(x))$ has cohomological dimension 
$$
n-\codim_{\CP^n_k}(f'(x))<n-c,
$$ 
see (\ref{eq:coho-dimension}).
Hence, $H^{n-c}(\kappa(f'(x)),\mu_m^{\otimes n-c})=0$, which implies $\alpha'=0$.
In particular, $\del_\mu\alpha=0$ and so $\del_x ({f'}^\ast\alpha)=0$ follows from (\ref{diag:del-pullbacks}). 
This completes the proof of Theorem \ref{thm:examples}.
\end{proof}

\section{Vanishing result and applications} \label{sec:vanishing}

The examples discussed in Sections \ref{subsec:CTO}, \ref{subsec:HPT} and \ref{subsec:Sch} all have the following feature in common:
Up to birational equivalence, there is a morphism $f:Y\to \CP^n_k$ whose generic fibre is smooth  and a class $\alpha\in H^n(k(\CP^n),\mu_m^{\otimes n})$ such that $f^\ast \alpha$ is nonzero and unramified.
One of the main discoveries in \cite{Sch-duke,Sch-ANT,Sch-JAMS,Sch-torsion} was the observation that all these examples have the following property in common:
for any smooth variety $Y'$ with a generically finite dominant morphism $\tau:Y'\to Y$, and for any point $x\in Y'$ with $\tau(x)\in Y^{\sing}$, 
 the restriction of $\tau^\ast f^\ast \alpha$ to $x$ vanishes:
$$
(\tau^\ast f^\ast \alpha)|_{x}=0\in H^n(\kappa(x),\mu_m^{\otimes n}) .
$$ 
This vanishing result is crucial, as it allows to apply Theorem \ref{thm:specialization-schreieder} without resolving the singularities of the special fibre (or even the whole family) of the degeneration.
The intuition behind this result is as follows:
\begin{itemize}
\item since the generic fibre of $f$ is smooth, it suffices to show $(\tau^\ast f^\ast \alpha)|_{x}=0$ whenever $f(\tau(x))\in \CP^n_k$ is not the generic point of $\CP^n_k$;
\item if $f(\tau(x))$ is not contained in the ramification locus of $\alpha$, then the restriction $(\tau^\ast f^\ast \alpha)|_{x}$ factors via the restriction of $\alpha$ to the point $f(\tau(x))\in \CP^n_k$.
Since the latter is not the generic point, it has codimension at least 1 and so its cohomological dimension is at most $n-1$, which shows that $\alpha|_{f(\tau(x))}=0$ and so $(\tau^\ast f^\ast \alpha)|_{x}=0$ as we want.
\item  if $f(\tau(x))$ is  contained in the ramification locus of $\alpha$, then the intuition is as follows: first note that $\tau^\ast f^\ast \alpha$ is unramified over $k$ by Proposition \ref{prop:functoriality}, because $f^\ast \alpha$ is unramified and $\tau$ is generically finite.
On the other hand, $\alpha$ is by assumption ramified locally around the point $f(\tau(x))$ and so the most natural reason for the fact that $\tau^\ast f^\ast \alpha$ is unramified over $k$ would be that this class in fact vanishes \'etale locally around $x$, so that it can be extended trivially across $x$.
In  \cite{Sch-duke,Sch-ANT,Sch-JAMS,Sch-torsion}  exactly this phenomenon is observed for all the examples mentioned in  Sections \ref{subsec:CTO}, \ref{subsec:HPT} and \ref{subsec:Sch}.
\end{itemize}

In the case where the generic fibre of $f$ is a smooth quadric, the following general vanishing result, which gives some evidence for the above intuition and which makes it very easy to apply Theorem \ref{thm:specialization-schreieder} in many situations, is proven in \cite{Sch-JAMS}.

\begin{theorem} \label{thm:vanishing}
Let $f:Y\to S$ be a surjective morphism of proper varieties over an algebraically closed field $k$ with $\operatorname{char}(k)\neq 2$ whose generic fibre is birational to a smooth quadric over $k(S)$.
Assume that there is a class $\alpha\in H^n(k(S),\mu_2^{\otimes n})$ with $f^\ast \alpha\in H^n_{nr}(k(Y)/k,\mu_2^{\otimes n})$, where $n=\dim(S)$. 

Then for any dominant generically finite morphism $\tau:Y'\to Y$ of varieties with $Y'$ smooth over $k$ and for any (scheme) point $x\in Y'$ which does not map to the generic point of $S$ via $f\circ \tau$, we have   $(\tau^\ast f^\ast \alpha)|_x=0\in H^n(\kappa(x),\mu_2^{\otimes n})$.
\end{theorem}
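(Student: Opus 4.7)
The plan is to show first that $\tau^\ast f^\ast\alpha$ is unramified on $Y'$ (so its restriction to $x$ is well-defined), then to dispose of the easy cases by cohomological-dimension reasons, and finally to handle the crucial case where the image of $x$ lies in the ramification locus of $\alpha$ by exploiting the smooth-quadric structure of the generic fibre.

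\textbf{Step 1 (unramified-ness of the pullback).} I would first check that $\tau^\ast f^\ast\alpha\in H^n_{nr}(k(Y')/k,\mu_2^{\otimes n})$. Any geometric valuation $\nu'$ on $k(Y')$ over $k$ restricts to a nontrivial valuation $\nu$ on $k(Y)$, and since $k(Y')/k(Y)$ is a finite extension, the residue fields of $\nu$ and $\nu'$ have the same transcendence degree over $k$, so $\nu$ is geometric on $k(Y)$. The commutative square (\ref{diag:del-pullbacks}) together with $\partial_\nu(f^\ast\alpha)=0$ then yields $\partial_{\nu'}(\tau^\ast f^\ast\alpha)=0$. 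Smoothness of $Y'$ and Proposition~\ref{prop:restriction} thus give a well-defined, unramified restriction $\beta:=(\tau^\ast f^\ast\alpha)|_x\in H^n(\kappa(x),\mu_2^{\otimes n})$.

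\textbf{Step 2 (cohomological-dimension reductions).} Put $s=(f\circ\tau)(x)\in S$. Since $x$ does not map to the generic point, $\operatorname{trdeg}_k\kappa(s)\leq n-1$. Consider two subcases. If $\operatorname{trdeg}_k\kappa(x)\leq n-1$, then $H^n(\kappa(x),\mu_2^{\otimes n})=0$ by (\ref{eq:coho-dimension}) and we are done. Assume therefore $\operatorname{trdeg}_k\kappa(x)\geq n$, so that $x$ lies in a positive-dimensional fibre of $f\circ\tau$ over $s$. Within this regime, if $\alpha$ is unramified at $s$ (i.e.\ lifts to $H^n(\mathcal O_{S^\nu,s'},\mu_2^{\otimes n})$ on a suitable normal model $S^\nu$ of $S$ at a point $s'$ over $s$), then by Theorem~\ref{thm:injectivity+purity} and the uniqueness of the lift on the smooth scheme $Y'$, $\beta$ is the pullback along $\kappa(s)\hookrightarrow\kappa(x)$ of $\alpha|_s\in H^n(\kappa(s),\mu_2^{\otimes n})$; but the latter group is $0$ by (\ref{eq:coho-dimension}), giving $\beta=0$.

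\textbf{Step 3 (main case, using the quadric hypothesis).} The genuine content of the theorem lies in the case where $\operatorname{trdeg}_k\kappa(x)\geq n$ and $\alpha$ ramifies along a prime divisor through $s$ on every normal birational model of $S$. Here I would exploit the hypothesis that the generic fibre of $f$ is a smooth quadric $Q$ over $K:=k(S)$ and invoke Arason's description of $\ker(H^n(K,\mu_2^{\otimes n})\to H^n(K(Q),\mu_2^{\otimes n}))$. The unramified-ness of $f^\ast\alpha$ forces, via the pullback formula (\ref{diag:del-pullbacks}) applied at each prime divisor $E\subset Y$ above a ramification divisor $D\subset S^\nu$, a compatibility between the residues $\partial_D\alpha$ and the classes of the quadrics appearing as generic fibres of $f^{-1}(D)\to D$. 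Passing to the Henselization $\mathcal O_{S^\nu,s'}^h$, this compatibility should permit one to rewrite $\alpha$, étale-locally at $s'$, as a sum of a class pulled back from $\kappa(s)$ (which contributes $0$ by cohomological dimension, as in Step~2) and a class in the Arason kernel of the relevant quadric (whose pullback to $Y$ already vanishes), yielding $\beta=0$.

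The main obstacle is precisely Step~3: translating the Arason-type kernel statement for a smooth quadric over a field into a structural result that operates in the relative setting of $f:Y\to S$ and that survives the Henselization/alteration used to reach $x$. All other steps are formal consequences of the machinery of Sections~2--4; only here does one really use that the coefficients are $\mu_2^{\otimes n}$ and that the fibre is a quadric rather than a more general variety.
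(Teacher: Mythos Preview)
The paper does not actually prove Theorem~\ref{thm:vanishing}; it merely states the result and refers to \cite{Sch-JAMS} for the proof. So there is no in-paper argument to compare your proposal against. What the paper does provide, in the paragraphs immediately preceding the theorem, is an informal three-case heuristic (generic point of $S$; away from the ramification locus of $\alpha$; inside the ramification locus), and your Steps~1--3 reproduce exactly that heuristic. Steps~1 and~2 are essentially formal and are fine as written (Step~1 is Proposition~\ref{prop:functoriality}(a); in Step~2 you should be a bit more careful that $S$ need not be smooth, so ``$\alpha|_s$'' requires passing to a smooth model or to the local ring on $Y'$ directly, but this is repairable).

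The genuine gap is Step~3, and you are candid about this yourself. What you have written there is not a proof but a wish: you say the unramified-ness of $f^\ast\alpha$ ``should permit one to rewrite $\alpha$, \'etale-locally at $s'$, as a sum'' of a class from $\kappa(s)$ and a class in the Arason kernel. This is precisely the content of the theorem, and nothing in your outline explains how to achieve it. Arason's theorem (for $n\leq 3$) or the Orlov--Vishik--Voevodsky theorem (Theorem~\ref{thm:OVV}, for Pfister forms) describes the kernel of pullback to the function field of a single quadric over a \emph{field}; it does not by itself control what happens after Henselization at a point where the quadric degenerates, nor does it tell you that the residues of $\alpha$ match up with the discriminant/Clifford data of the degenerating family in the way you need. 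The actual argument in \cite{Sch-JAMS} requires a careful induction together with a local splitting principle for quadrics, and is substantially more delicate than the one-line invocation you give. As it stands, your proposal is an accurate road map but not a proof: Step~3 is where all the work lies, and it has not been done.
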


Combining these vanishing results with the examples in Sections \ref{subsec:CTO}, \ref{subsec:HPT} and \ref{subsec:Sch}, one deduces for instance the following from the specialization result in Theorem \ref{thm:specialization-schreieder}:
\begin{itemize}
\item A very general hypersurface $X\subset \CP^2_\C\times \CP^3_\C$ of bidegree $(2,2)$ is not stably rational \cite{HPT}. Since the smooth bidegree $(2,2)$ hypersurfaces in $\CP^2_\C\times \CP^3_\C$ that are known to be rational can be shown to be dense in moduli, this showed in particular that rationality is not an open nor a closed condition in smooth projective families, see \cite{HPT}.
\item Generalizations of the aforementioned result to all standard quadric surface bundles over $\CP^2$, different from Verra fourfolds and cubic fourfolds containing a plane (see \cite{Sch-ANT,Paulsen}), as well as to quadric bundles of arbitrary fibre dimension, see \cite{Sch-duke}.
\item A very general hypersurface $X\subset \CP^{N+1}_k$ of dimension $N\geq 3$ and degree $d\geq \log_2N+2$ (resp.\ $d\geq \log_2N+3$) is not stably rational if $k$ is an uncountable field of characteristic different from $2$ (resp.\ equal to $2$), see \cite{Sch-JAMS,Sch-torsion}.
This improved earlier bounds of Koll\'ar \cite{kollar} and Totaro \cite{totaro} that were linear (roughly $d\geq \frac{2}{3}N$).

The results in \cite{Sch-JAMS,Sch-torsion} involve  a degeneration of a very general hypersurface to a hypersurface $Z$ that has high multiplicity along a large-dimensional linear subspace $P$.
One then needs to pass to the blow-up $Y=Bl_PZ$ and one exploits that projection from $P$ induces a fibration structure $f:Y\to \CP^n$ where $n=\dim X-\dim P$.
%
%
While one can then use results \`a la Theorem \ref{thm:vanishing}  to get the desired vanishing condition (\ref{cond:vanishing}) from Theorem \ref{thm:specialization-schreieder} at points that do not dominate $\CP^n$ via $f$, this vanishing condition needs to be checked by hand on the exceptional divisor of the blowup $Bl_PZ\to Z$, which makes the argument somewhat subtle.
\end{itemize}

\begin{remark}
In dimension $N=5$, the logarithmic bound from \cite{Sch-JAMS} shows that very general hypersurfaces of degree at least five in $\CP^6_k$ are not stably rational.
In the case where $k$ has characteristic 0, this result was improved by Nicaise and Ottem \cite{NO}, who showed that very general quartic fivefolds are stably irrational over fields of characteristic 0.
Their result is achieved by an application of Theorem \ref{thm:NS-KT}, and it relies eventually on the stable irrationality of the quadric bundles of Hassett--Pirutka--Tschinkel from Section \ref{subsec:HPT}, whose discriminant locus has smaller degree than those of the corresponding generalizations in Theorem \ref{thm:examples} that have been used in (arbitrary dimension) in \cite{Sch-JAMS}. 
\end{remark}

\section{Open Problems}\label{sec:problem}

\subsection{Decompositions of the diagonal versus stable rationality}
Recall from Lemma \ref{lem:ratl->decofdiag} that a variety that is stably rational admits a decomposition of the diagonal.
It is natural to wonder whether the converse to this statement holds as well.
It is known that a smooth complex projective surface $X$ with $\CH_0(X)\cong \Z$ and without torsion in $H^2(X,\Z)$ admits a decomposition of the diagonal, see \cite[p.\ 1252, Remark (2)]{BS}, \cite[Corollary 1.10]{ACTP}, \cite[Corollary 2.2]{Voi-JEMS} or \cite{kahn}.
There are such surfaces that are of general type and so existence of a decomposition of the diagonal is in general not equivalent to stable rationality.
However, no such counterexample is known if we restrict to the class of rationally connected varieties.

\begin{question}\label{question:dec-of-diag}
Is there a rationally connected smooth complex projective variety which admits a decomposition of the diagonal, but which is not stably rational?
\end{question}

The above question is  already open for the Fermat cubic threefold, which admits a decomposition of the diagonal by a result of Colliot--Th\'el\`ene \cite{CT17}, while stable irrationality is unknown.
In fact, while any smooth cubic threefold in $\CP^4_\C$ is irrational  by a celebrated result of Clemens and Griffiths \cite{clemens-griffiths}, the question whether it is also stably irrational is open for any such cubic; cf.\ \cite{Voi-JEMS} for an interesting connection of this question to the integral Hodge conjecture on abelian varieties.

A natural approach to Question \ref{question:dec-of-diag} would be given by the obstruction for stable rationality in \cite{NS,KT} (see Theorem \ref{thm:NS-KT}), which might a priori not be sensitive to decompositions of the diagonal.

A related classical open question pointed out by one of the referees is as follows.
\begin{question}
Is there a variety $X$ over an algebraically closed field $k$ which is not stably rational but such that $X\times_k Y$ is rational for some variety $Y$ over $k$?
\end{question}

\subsection{Torsion orders and unirationality}

Let $X$ be a rationally chain connected projective variety over a field $k$.
Then its torsion order $\Tor(X)$ is finite, see Section \ref{subsec:Tor(X)}.
The class of rationally chain connected varieties is closed under several natural operations (e.g.\ taking products and taking quotients) and so it is natural to investigate how the torsion orders behave when performing these operations.

\begin{lemma}
Let $X$ and $Y$ be proper varieties over a field $k$ whose torsion orders are finite.
Then
$$
\Tor(X\times Y)\mid \Tor(X)\Tor(Y).
$$
\end{lemma}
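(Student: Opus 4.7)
The plan is to reinterpret the torsion-order hypotheses as cycle-theoretic identities on the self-products $X\times_k X$ and $Y\times_k Y$, combine them by external product, and then read off the corresponding identity on $(X\times Y)\times_k (X\times Y)$. Writing $n:=\dim X$ and $m:=\dim Y$, the same localization exact sequence argument that underlies Lemma~\ref{lem:dec-of-diag-CH_0} (see also the remark following Definition~\ref{def:Tor(X)}, where the equivalence is stated for an arbitrary multiple of $\Delta_X$) rewrites the finiteness of $\Tor(X)$ and $\Tor(Y)$ as identities
\begin{align*}
\Tor(X)\cdot [\Delta_X] &= [X\times z]+[Z_X]\in \CH_n(X\times_k X),\\
\Tor(Y)\cdot [\Delta_Y] &= [Y\times w]+[Z_Y]\in \CH_m(Y\times_k Y),
\end{align*}
with $z\in Z_0(X)$, $w\in Z_0(Y)$, and $Z_X$, $Z_Y$ cycles whose components do not dominate the first factor of the respective self-product.

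Next I would multiply the two identities via the external product of cycle classes, which descends to a well-defined bilinear pairing on Chow groups by standard intersection theory (see \cite[\S 1.10]{fulton}). This gives
\begin{align*}
\Tor(X)\Tor(Y)\cdot [\Delta_X]\boxtimes[\Delta_Y]
= \bigl([X\times z]+[Z_X]\bigr)\boxtimes \bigl([Y\times w]+[Z_Y]\bigr)
\end{align*}
in $\CH_{n+m}\bigl((X\times X)\times_k (Y\times Y)\bigr)$, and I would then transport this identity to $\CH_{n+m}\bigl((X\times Y)\times_k (X\times Y)\bigr)$ via the canonical isomorphism swapping the two middle factors. Under this isomorphism $[\Delta_X]\boxtimes[\Delta_Y]$ becomes $[\Delta_{X\times Y}]$, while the distinguished term $[X\times z]\boxtimes [Y\times w]$ becomes $[(X\times Y)\times u]$ with $u:=z\times w\in Z_0(X\times Y)$.

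Finally I would verify that each of the three mixed terms $[X\times z]\boxtimes [Z_Y]$, $[Z_X]\boxtimes [Y\times w]$, and $[Z_X]\boxtimes [Z_Y]$ gives, after the swap, a cycle on $(X\times Y)\times_k (X\times Y)$ that does not dominate the first factor. This is a one-line dimension count: the image of each such cycle under the first projection is contained in one of $X\times \pi_1(Z_Y)$, $\pi_1(Z_X)\times Y$, or $\pi_1(Z_X)\times \pi_1(Z_Y)$, and in each case the total dimension is strictly less than $n+m$ because $\pi_1(Z_X)$ and $\pi_1(Z_Y)$ are proper closed subsets of the (irreducible) varieties $X$ and $Y$. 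Collecting everything yields
\begin{align*}
\Tor(X)\Tor(Y)\cdot [\Delta_{X\times Y}] = [(X\times Y)\times u] + [Z_{X\times Y}]\in \CH_{n+m}\bigl((X\times Y)\times_k (X\times Y)\bigr)
\end{align*}
with $Z_{X\times Y}$ not dominating the first factor, which by the same reformulation of Definition~\ref{def:Tor(X)} used at the start of the proof translates into $\Tor(X\times Y)\mid \Tor(X)\Tor(Y)$, as desired. The only mildly technical point in the whole argument is the well-definedness of the external product on Chow groups, but this is standard.
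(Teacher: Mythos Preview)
Your proposal is correct and follows essentially the same approach as the paper's own proof: the paper simply says that the diagonal of $X\times Y$ corresponds to the product of the diagonals of $X$ and $Y$, so the decompositions of $\Tor(X)\Delta_X$ and $\Tor(Y)\Delta_Y$ multiply to a decomposition of $\Tor(X)\Tor(Y)\Delta_{X\times Y}$. You have spelled out the details (external product, swap of middle factors, dimension count for the mixed terms) that the paper leaves implicit, but the underlying idea is identical.
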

\begin{proof}
The diagonal of $X\times Y$ corresponds to the product of the diagonals of $X$ and $Y$.
Since $\Tor(X)\Delta_X$ and $\Tor(Y)\Delta_Y$ admit decompositions as in (\ref{def:dec-of-diag}), we conclude that 
$$
\Tor(X)\Tor(Y)\Delta_{X\times Y}
$$
admits a similar decomposition and so $\Tor(X\times Y)\mid \Tor(X)\Tor(Y)$, as we want.
\end{proof}

\begin{lemma} \label{lem:Tor(S^nX)}
Let $X$ be a smooth projective variety over a field $k$ whose torsion order is finite.
For a positive integer $n$, we consider the symmetric product $S^nX=X^n/\Sym(n)$ of $X$.
Then
$$
  \Tor( S^nX )\mid n!\cdot \Tor(X)^n
$$
\end{lemma}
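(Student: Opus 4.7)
The plan is to combine the two preceding lemmas. Write $\pi\colon X^n \to S^nX$ for the quotient morphism by the symmetric group action. Since $X$ is projective, $X^n$ is projective and $S^nX$ is a projective variety (the quotient of a projective variety by a finite group), and $\pi$ is a finite surjective morphism of degree $n!$.

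First I would apply the previous lemma (which states $\Tor(X \times Y) \mid \Tor(X)\Tor(Y)$) inductively to obtain
$$
\Tor(X^n) \mid \Tor(X)^n.
$$
In particular $\Tor(X^n)$ is finite, since $\Tor(X)$ is by hypothesis. Next I would apply Lemma \ref{lem:Tor(X)} to the proper dominant morphism $\pi\colon X^n \to S^nX$ to conclude
$$
\Tor(S^nX) \mid \deg(\pi)\cdot \Tor(X^n) = n!\cdot \Tor(X^n),
$$
and then chaining the two divisibilities yields $\Tor(S^nX) \mid n!\cdot \Tor(X)^n$, as required.

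There is no real obstacle here; the only point that requires a small sanity check is that both hypotheses of Lemma \ref{lem:Tor(X)} are met, namely that $X^n$ and $S^nX$ are proper $k$-varieties and that $\pi$ is proper, dominant, and of the claimed degree. All three follow from the projectivity of $X$ and standard facts about quotients of projective varieties by finite groups. The proof does not require $X$ to be smooth, so the statement of the lemma could in fact be stated for proper varieties; the smoothness assumption in the lemma is a convenient hypothesis but is not used in this argument.
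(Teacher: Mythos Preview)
Your proof is correct and follows essentially the same approach as the paper: apply Lemma \ref{lem:Tor(X)} to the degree $n!$ quotient map $X^n\to S^nX$, and combine this with the product lemma giving $\Tor(X^n)\mid \Tor(X)^n$. Your observation that smoothness of $X$ is not actually used in the argument is also correct.
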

\begin{proof}
By Lemma \ref{lem:Tor(X)}, applied to the quotient map $X^n\to S^nX$, we have
$$
  \Tor( S^nX )\mid n!\cdot \Tor(X^n) 
$$
and so the claim follows from the previous lemma, which implies $  \Tor(X^n) \mid \Tor(X)^n$.
\end{proof}


\begin{question}
Let $X$ be a smooth complex projective variety with finite torsion order.
Is it possible to improve the estimate from Lemma \ref{lem:Tor(S^nX)} for the torsion order of $S^nX$?
\end{question}

More precisely, we may ask the following.

\begin{question} 
Is it true that for any smooth complex projective variety $X$ with finite torsion order and for any positive integer $n$, we have
$
  \Tor( S^nX )\mid   \Tor(X)^n 
$ (or maybe even $\Tor( S^nX )\mid   \Tor(X)$)?
\end{question}

In the opposite direction, it is natural to wonder about the following.

\begin{question} \label{question:unirational}
Is there a smooth complex projective variety $X$ that is rationally (chain) connected such that the prime factors of
$
  \Tor( S^nX ) 
$ are unbounded for $n\to \infty$?
\end{question}

A positive answer to Question \ref{question:unirational} would, by the following proposition, imply the existence of a rationally connected\footnote{For smooth complex projective varieties, rationally chain connectedness and rationally connectedness coincide, see \cite{kollar2}.} smooth complex projective variety that is not unirational, which is a longstanding open problem in the field.

\begin{proposition} \label{prop:unirational}
Let $X$ be a variety over a field $k$ and assume that there is a dominant rational map $f:\CP^{\dim X}_k\dashrightarrow X$.
Then for all $n\geq 1$, we have
$$
\Tor(S^nX)\mid \deg(f)^n.
$$
In particular, the prime factors of $\Tor(S^nX)$ are bounded for $n\to \infty$.
\end{proposition}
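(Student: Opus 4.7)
The plan is to symmetrize $f$ to obtain a rational map out of $S^n\CP^d_k$ of controlled degree, then appeal to the rationality of $S^n\CP^d_k$ so that Corollary \ref{cor:Tor(X)} applies. Setting $d:=\dim X$, the product $f^n:(\CP^d_k)^n\dashrightarrow X^n$ is $S_n$-equivariant and dominant of degree $\deg(f)^n$; since the two quotient maps $(\CP^d_k)^n\to S^n\CP^d_k$ and $X^n\to S^nX$ both have degree $n!$, passing to quotients yields a dominant rational map $S^nf:S^n\CP^d_k\dashrightarrow S^nX$ of degree exactly $\deg(f)^n$.

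The central step is to show that $S^n\CP^d_k$ is rational, which is Mattuck's classical theorem on multisymmetric functions. My approach would go via Weil restriction: $\CP^d_k$ is birational to $\A^1_k\times\A^{d-1}_k$, so $S^n\CP^d_k$ is birational to $S^n(\A^1_k\times\A^{d-1}_k)$, which projects onto $S^n\A^1_k\cong\A^n_k$ via the first coordinate. Writing $K$ for the function field of $\A^n_k$ and $L=k(x_1,\dots,x_n)$ for the $S_n$-Galois extension arising from the cover $(\A^1_k)^n\to\A^n_k$, the universal $n$-tuple over $\Spec K$ is $\Spec M$ where $M=K(x_1)\subset L$ has degree $n$ over $K$. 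Unwinding the $S_n$-quotient shows that the generic fibre of $S^n(\A^1_k\times\A^{d-1}_k)\to\A^n_k$ is the Weil restriction $\operatorname{Res}_{M/K}(\A^{d-1}_M)$, which equals $\A^{n(d-1)}_K$ because Weil restriction of affine space along an \'etale cover of degree $n$ is affine space of the multiplied dimension. In particular this generic fibre has a $K$-rational point, so the projection admits a rational section, and $S^n\CP^d_k$ is therefore birational to $\A^n_k\times\A^{n(d-1)}_k=\A^{nd}_k$.

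Composing any birational map $\CP^{nd}_k\dashrightarrow S^n\CP^d_k$ with $S^nf$ produces a dominant rational map $\CP^{nd}_k\dashrightarrow S^nX$ of degree $\deg(f)^n$, and Corollary \ref{cor:Tor(X)} then gives $\Tor(S^nX)\mid\deg(f)^n$. The assertion on bounded prime factors is immediate because $\deg(f)^n$ and $\deg(f)$ share the same set of prime divisors, which is a fixed finite set independent of $n$. I expect the main obstacle to lie in justifying the Weil restriction description of the generic fibre cleanly and, in particular, verifying the existence of the rational section; an alternative is to invoke Mattuck's rationality theorem for $S^n\CP^d_k$ as a black box, which reduces the remainder of the argument to a brief formality.
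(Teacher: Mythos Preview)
Your proposal is correct and follows essentially the same approach as the paper: form the symmetric power $S^nf$ of degree $\deg(f)^n$, invoke the rationality of $S^n\CP^d_k$, and conclude via Corollary~\ref{cor:Tor(X)} (the paper phrases this last step via Lemma~\ref{lem:Tor(X)} combined with Lemma~\ref{lem:ratl->decofdiag}, which amounts to the same thing). The only substantive difference is that you sketch a proof of Mattuck's theorem via Weil restriction, whereas the paper simply cites \cite{Mat68} as a black box; your sketch is correct, though the sentence about the rational section is superfluous once you have identified the generic fibre with $\A^{n(d-1)}_K$, since that already gives the birational equivalence with $\A^{nd}_k$ directly.
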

\begin{proof}
Taking the $n$-th symmetric power of $f$, we obtain a dominant rational map
$$
S^nf:S^n\CP_k^{\dim X}\dashrightarrow S^nX .
$$ 
The degree of this map may be identified to $\deg S^nf=\deg(f)^n$.
Hence, Lemma \ref{lem:Tor(X)} implies
$$
\Tor(S^nX)\mid \deg(f)^n\cdot \Tor(S^n\CP^{\dim X}) .
$$
On the other hand, $S^n\CP^{\dim X} $ is rational by an old result of Mattuck \cite{Mat68}.
Hence, $\Tor(S^n\CP^{\dim X})=1 $ by Lemma \ref{lem:ratl->decofdiag} and so the proposition follows. 
\end{proof}

\section*{Acknowledgements}  

Thanks to the organizers of the Schiermonnikoog conference on Rationality of Algebraic Varieties in spring 2019 for inviting me to write this survey.
Parts of this survey rely on lecture series that I have given in Moscow and Nancy in spring 2019.
The comments of two excellent referees significantly improved this text.


\end{document}